\author{Michael K. Brown}
\author{Prashanth Sridhar}
\newcommand{\Addresses}{{
	\vskip\baselineskip
  	\footnotesize
  	\noindent \textsc{Department of Mathematics and Statistics, Auburn University} \par\nopagebreak
	\noindent \textit{E-mail addresses:} \texttt{mkb0096@auburn.edu, pzs0094@auburn.edu}
 }}
\numberwithin{equation}{section}
\newtheorem{lemma}[equation]{Lemma}
\newtheorem{lem}[equation]{Lemma}
\newtheorem{prop}[equation]{Proposition}
\newtheorem{cor}[equation]{Corollary}
\newtheorem{conj}[equation]{Conjecture}
\newtheorem{claim*}{Claim}
\newtheorem{thm}[equation]{Theorem}
\theoremstyle{definition}
\newtheorem{defn}[equation]{Definition}
\newtheorem{dfn}[equation]{Definition}
\newtheorem{example}[equation]{Example}
\newtheorem{setup}[equation]{Setup}
\theoremstyle{remark}
\newtheorem{notation}[equation]{Notation}
\newtheorem{remark}[equation]{Remark}
\newtheorem{rem}[equation]{Remark}
\newcommand{\iso}{\xrightarrow{\simeq}}
\renewcommand{\k}{\Bbbk}
\renewcommand{\a}{\alpha}
\newcommand{\Hom}{\operatorname{Hom}}
\newcommand{\RHom}{\mathbf{R}\underline{\Hom}}
\newcommand{\D}{\msf{D}}
\newcommand{\kk}{\mathbf{k}}
\newcommand{\del}{\partial}
\newcommand{\Mod}{\operatorname{Mod}}
\def\mod{\operatorname{mod}}
\newcommand{\gr}{\operatorname{gr}}
\newcommand{\im}{\operatorname{Im}}
\def\lhom{\operatorname{\underline{Hom}}}
\newcommand{\dGamma}{\mathbf{R}\Gamma}
\newcommand{\cone}{\on{cone}}
\def\nc{\newcommand}
\nc{\on}{\operatorname}
\nc{\bideg}{\on{bideg}}
\nc{\xra}{\xrightarrow}
\def\phi{\varphi}
\nc\cB{\mathcal{B}}
\def\th{\on{th}}
\def\bu{\bullet}
\def\D{\on{D}}
\def\Db{\D^{\on{b}}}
\def\Df{\D^{\on{f}}_{\on{gr}}}
\nc{\into}{\hookrightarrow}
\nc{\onto}{\twoheadrightarrow}
\nc{\LL}{\mathbf{L}}
\nc{\RR}{\mathbf{R}}
\nc{\Perf}{\on{Perf}_{\on{gr}}}
\nc{\nat}{\natural}
\nc{\tors}{\on{tors}}
\nc{\Tors}{\on{Tors}}
\def\mod{\on{mod}}
\def\Mod{\on{Mod}}
\nc{\qgr}{\on{qgr}}
\nc{\Qgr}{\on{Qgr}}
\nc{\fQgr}{\on{Qgr}^{\on{f}}}
\nc{\colim}{\on{colim}}
\def\Z{\mathbb{Z}}
\nc{\Ext}{\on{Ext}}
\def\Dsing{\D_{\on{gr}}^{\on{sg}}}
\def\Dsg{\D_{\on{gr}}^{\on{sg}}}
\def\Dbgr{\Db_{\on{gr}}}
\nc{\om}{\omega}
\nc{\w}{\widetilde}
\nc{\PP}{\mathbb{P}}
\nc{\mf}{\on{mf}}
\nc{\OO}{\mathcal{O}}
\nc{\Proj}{\on{Proj}}
\nc{\Qcoh}{\on{Qcoh}}
\nc{\coh}{\on{coh}}
\nc{\Tor}{\on{Tor}}
\nc{\Modf}{\Mod^{\on{f}}}
\def\op{\on{op}}
\nc{\ce}{\coloneqq}
\def\co{\colon}
\def\k{\kk}
\nc{\Com}{\on{Com}}
\nc{\A}{\mathcal{A}}
\nc{\B}{\mathcal{B}}
\nc{\C}{\mathcal{C}}
\nc{\I}{\mathcal{I}}
\nc{\M}{\mathcal{M}}
\nc{\Sh}{\on{Sh}}
\nc{\QCoh}{\on{Qcoh}}
\nc{\Coh}{\on{coh}}
\nc{\fQCoh}{\QCoh^{\on{f}}}
\nc{\ov}{\overline}
\nc{\End}{\on{\underline{End}}}
\def\MR#1{}
\def\S{\mathcal{S}}
\def\P{\mathcal{P}}
\nc{\Qgrf}{\Qgr^{\on{f}}}
\nc{\uHom}{\underline{\Hom}}
\nc{\Inj}{\mathrm{Inj}}
\nc{\proj}{\mathrm{Proj}}
\nc{\spec}{\mathrm{Spec}}
\nc{\xla}{\xleftarrow}
\nc{\Dqgr}{\D_{\qgr}}
\nc{\DQgr}{\D_{\Qgr}}
\nc{\cK}{\mathcal{K}}
\nc{\from}{\leftarrow}
\nc{\cd}{\on{cd}}
\nc{\N}{\mathcal{N}}
\begin{document}
\title{Orlov's Theorem for dg-algebras}
\thanks{The first author was partially supported by NSF grant DMS-2302373.}

\begin{abstract}
A landmark theorem of Orlov relates the singularity category of a graded Gorenstein algebra to the derived category of the associated noncommutative projective scheme. We generalize this theorem to the setting of differential graded algebras. As an application, we obtain new cases of the Lattice Conjecture in noncommutative Hodge theory.  
\end{abstract}

\thanks{{\em Mathematics Subject Classification} 2020: 14F08.}

\numberwithin{equation}{section}

\maketitle

\setcounter{section}{0}

\section{Introduction}
Let $\k$ be a field, $f_1, \dots, f_c \in \k[x_0, \dots, x_n]$ a regular sequence of homogeneous forms, $X \subseteq \PP^n$ the associated projective complete intersection, and $R = \k[x_0, \dots, x_n] / (f_1, \dots, f_c)$ its homogeneous coordinate ring. A fundamental result of Orlov~\cite{Orlov2009} exhibits a close relationship between two categories of great interest in algebraic geometry and commutative algebra: the bounded derived category $\Db(X)$ of coherent $\OO_X$-modules and the singularity category $\Dsg(R)$, i.e. the quotient of the bounded derived category $\Dbgr(R)$ of $\Z$-graded $R$-modules by the subcategory $\Perf(R)$ of perfect complexes. An important special case of Orlov's Theorem is stated as follows:

\begin{thm}[\cite{Orlov2009}]
\label{thm:cy}
If $X$ is Calabi-Yau, then the categories $\Db(X)$ and $\Dsg(R)$ are equivalent. 
\end{thm}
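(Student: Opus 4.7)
The plan is to follow Orlov's original strategy: realize both $\Db(X)$ and $\Dsg(R)$ as quotients of the same ambient category $\Dbgr(R)$, construct a common ``window'' subcategory $\P_i \subseteq \Dbgr(R)$ that is equivalent to each quotient under the Calabi-Yau hypothesis, and deduce the equivalence from this. The starting point is Serre's theorem, which identifies $\Db(X) \simeq \Db(\qgr R) \simeq \Dbgr(R)/\Tors$, where $\Tors$ denotes the thick subcategory of complexes with finite-length cohomology. Combined with the tautological $\Dsg(R) = \Dbgr(R)/\Perf(R)$, this reduces the theorem to a comparison of two quotients of $\Dbgr(R)$. Because $R$ is a graded complete intersection it is Gorenstein, and the Calabi-Yau hypothesis on $X = \Proj R$ translates into the vanishing of the Gorenstein $a$-invariant of $R$, i.e., $\omega_R \cong R$ as graded modules.

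Next, for each integer $i$, I would define the grade-restriction window $\P_i \subseteq \Dbgr(R)$ as the full subcategory of complexes $M$ satisfying $\Hom_{\Dbgr(R)}(\k(-j), M[n]) = 0$ for every $j < i$ and every $n \in \Z$. Using graded local duality, which relies on the Gorenstein property of $R$, this condition is equivalent to the shifted vanishing $\Hom_{\Dbgr(R)}(R(-j), M[n]) = 0$ for all $j < i + a$. Thus $\P_i$ admits two complementary descriptions, one in terms of the torsion generators $\k(-j)$ and one in terms of the free generators $R(-j)$, the two descriptions differing by a shift equal to the Gorenstein parameter $a$. Using adjoints to the quotient functors---local cohomology and Serre sections on the $\qgr$ side, and a graded version of Cohen-Macaulay approximation on the singularity side---I would then show that the composites $\P_i \hookrightarrow \Dbgr(R) \to \Db(X)$ and $\P_{i+a} \hookrightarrow \Dbgr(R) \to \Dsg(R)$ are both equivalences of triangulated categories.

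When $a = 0$, the two windows $\P_i$ and $\P_{i+a}$ coincide for every $i$, yielding the zig-zag $\Db(X) \xla{\sim} \P_i \xra{\sim} \Dsg(R)$ and hence the desired equivalence. The main obstacle I anticipate is producing the equivalence $\P_{i+a} \simeq \Dsg(R)$: constructing a section of the quotient $\Dbgr(R) \to \Dsg(R)$ that lands in $\P_{i+a}$, and verifying that it is fully faithful, is the technical heart of the argument and uses the Gorenstein property in an essential way. Extracting graded local duality in a form sharp enough to match the $\k(-j)$- and $R(-j)$-descriptions of the window with the correct shift by $a$ will also require careful bookkeeping of internal degrees.
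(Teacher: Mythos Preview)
Your overall strategy---embed both $\Db(X)$ and $\Dsg(R)$ as admissible subcategories of $\Dbgr(R)$ via sections of the quotient functors, characterize each by orthogonality conditions involving $\k(-j)$'s and $R(-j)$'s, and match them via Gorenstein duality with a shift by $a$---is exactly Orlov's approach, and it is the one the paper follows. So you are on the right track.

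However, the window $\P_i$ as you have defined it does not work, and the claimed local-duality equivalence between its two descriptions is false. Your condition ``$\Hom(R(-j),M[n])=0$ for all $j<i+a$'' simply says $H^n(M)_j=0$ for $j<i+a$, i.e.\ $M\in\Dbgr(R)_{\ge i+a}$. This is \emph{not} the same as ``$\Hom(\k(-j),M[n])=0$ for all $j<i$'': already for $R=\k[x]$ (so $a=1$) and $i=0$, the module $M=R$ satisfies the first condition but not the second, since $\Ext^1_R(\k,R)\cong\k(1)$ is nonzero only in internal degree $-1$, so $\Hom(\k(-j),R[n])=0$ for all $j<0$, yet $R\notin\Dbgr(R)_{\ge 1}$. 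Graded local duality relates $\Hom(\k,M)$ to $\Hom(M,\k)$ (up to twist and shift), not to $\Hom(R,M)$.

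The correct setup, as in the paper, works inside $\Dbgr(R)_{\ge i}$ rather than all of $\Dbgr(R)$. One takes $\mathcal{D}_i$ to be the \emph{right} orthogonal of $\{\k(-j):j\ge i\}$ in $\Dbgr(R)_{\ge i}$ (the image of $\mathbf{R}\Gamma_{\ge i}$), and $\mathcal{T}_i$ to be the \emph{left} orthogonal of $\{R(-j):j\ge i\}$ in $\Dbgr(R)_{\ge i}$. The Gorenstein duality $\RHom_R(-,R)$ then identifies the right orthogonal of $\mathcal{D}_i$ in $\Dbgr(R)$ with $\langle \P_{\ge i+a},\S_{<i}\rangle$ and that of $\mathcal{T}_i$ with $\langle \S_{<i},\P_{\ge i}\rangle$; when $a=0$ these coincide, giving $\mathcal{D}_i=\mathcal{T}_i$. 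So your plan is salvageable, but you must (i) restrict to $\Dbgr(R)_{\ge i}$, (ii) take orthogonals to the $\k(-j)$ and $R(-j)$ with $j\ge i$ rather than $j<i$, and (iii) replace the appeal to local duality with the equivalence $\RHom_R(-,R)\colon \Dbgr(R)\to\Dbgr(R^{\op})^{\op}$, which is what actually swaps the two orthogonality conditions.
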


Theorem~\ref{thm:cy} is a surprising and powerful result, with many applications across algebraic geometry, commutative algebra, and  mirror symmetry, e.g.~\cite{ADS, BD, BFKorlov, BFK, BS, CIR, CT, DHL, EG, Fan, FU, HLS, isik, segal, sheridan, shipman, toda}. It is a mathematical incarnation of a phenomenon in physics called the Landau-Ginzburg/Calabi-Yau correspondence~\cite{witten}. Theorem~\ref{thm:cy} has also been extended by Baranovsky-Pecharich to Calabi-Yau hypersurfaces in toric varieties~\cite{BP} and by Hirano to  gauged Landau-Ginzburg models~\cite{hirano}.

In this paper, we ask: does Theorem~\ref{thm:cy} extend to the case where $X = V(f_1, \dots, f_c)$ is any projective variety, rather than a complete intersection? We show that the answer is ``yes", provided that one replaces $\Dsg(R)$ with the singularity category of the Koszul complex on $f_1, \dots, f_c$, and one replaces $\Db(X)$ with the bounded derived category of the sheaf of dg-algebras on $\PP^n$ associated to the Koszul complex. In fact, Orlov realizes Theorem~\ref{thm:cy} as a special case of a far more general statement about Gorenstein algebras, and the main goal of this paper is to generalize this result to Gorenstein dg-algebras.

In order to discuss our results in detail, we begin by stating Orlov's Theorem in its full generality. To do so, we must introduce some notation and terminology.
Let $A = \bigoplus_{i \ge 0} A_i$ be a graded (not necessarily commutative) Gorenstein $\k$-algebra such that $A_0 = \k$. The complete intersection ring $R$ above is an example of such an algebra; we discuss the Gorenstein condition in detail in Section~\ref{sec:gorenstein}.
Since $A$ is Gorenstein, we have an isomorphism $\RHom_A(\k, A) \cong \k(a)$ in $\Dbgr(A)$ (up to a cohomological shift) for some $a \in \Z$; see Notation~\ref{conventions} for our conventions concerning grading twists and cohomological shifts. The integer $a$ is called the \emph{Gorenstein parameter of $A$}. For instance, the Gorenstein parameter of the complete intersection ring $R = \k[x_0, \dots, x_n] / (f_1, \dots, f_c)$ is $n+1 - \sum_{i = 1}^c \deg(f_i)$. Let $\D_{\qgr}(A)$ denote the quotient of $\Dbgr(A)$ by the subcategory of complexes whose cohomology is finite dimensional over~$\k$. That is, $\D_{\qgr}(A)$ is the derived category of the noncommutative projective scheme associated to $A$, in the sense of Artin-Zhang~\cite{AZ}. When $A$ is commutative and generated in degree 1, a classical theorem of Serre implies that $\D_{\qgr}(A)$ is equivalent to the bounded derived category of the projective scheme $\Proj(A)$.  

Orlov's Theorem, in its full form, is stated as follows:

\begin{thm}[\cite{Orlov2009} Theorem 2.5]
\label{thm:orlov}
Let $\kk$, $A$, and $a$ be as above, and let $q \co \Dbgr(A) \to \Dsing(A)$ and $\pi \co \Dbgr(A) \to \Dqgr(A)$ denote the canonical functors. The objects $\pi A(j) \in \Dqgr(A)$ and $q \k(j) \in \Dsg(A)$ are exceptional for all $j \in \Z$, and we have:
\begin{enumerate}
\item If $a > 0$, then for each $i \in \Z$, there is a fully faithful functor $\Phi_i \colon \Dsing(A) \to \Dqgr(A)$ and a semiorthogonal decomposition
$
\Dqgr(A) = \langle \pi A(-i - a + 1), \dots, \pi A(-i), \Phi_i\Dsing(A) \rangle.
$
\item If $a < 0$, then for each $i \in \Z$, there is a fully faithful functor $\Psi_i \colon \Dqgr(A) \to  \Dsing(A)$ and a semiorthogonal decomposition
$
\Dsing(A) = \langle q \kk(-i), \dots, q \kk (-i+a+1), \Psi_i \Dqgr(A)\rangle.
$
\item If $a = 0$, then there is an equivalence $\Dsing(A) \xra{\simeq} \Dqgr(A)$.
\end{enumerate}
\end{thm}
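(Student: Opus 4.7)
My plan is to follow Orlov's original strategy in~\cite{Orlov2009}: identify both $\Dqgr(A)$ and $\Dsing(A)$ with suitable subcategories of an internal-degree truncation of $\Dbgr(A)$, and then use the Gorenstein parameter $a$ to measure the discrepancy between the two identifications. Write $\msf{T} \subset \Dbgr(A)$ for the full subcategory of complexes with finite-dimensional total cohomology, so that $\Dqgr(A) = \Dbgr(A)/\msf{T}$ by definition, while $\Dsing(A) = \Dbgr(A)/\Perf(A)$. For each $i \in \Z$, let $\Dbgr(A)_{\geq i}$ be the full subcategory of complexes with cohomology concentrated in internal degrees $\geq i$, and set $\msf{T}_{\geq i} = \msf{T} \cap \Dbgr(A)_{\geq i}$ and $\Perf_{\geq i}(A) = \Perf(A) \cap \Dbgr(A)_{\geq i}$.

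The first step is to show that the restrictions of $\pi$ and $q$ induce equivalences
\[
\Dbgr(A)_{\geq i}/\msf{T}_{\geq i} \simeq \Dqgr(A), \quad \Dbgr(A)_{\geq i}/\Perf_{\geq i}(A) \simeq \Dsing(A).
\]
Essential surjectivity on the $\pi$ side is elementary: any object of $\Dbgr(A)$ has cohomology bounded below in internal degrees up to a torsion summand, which is killed in $\Dqgr(A)$. Essential surjectivity on the $q$ side is more delicate, and is where the Gorenstein hypothesis first intervenes: one replaces a given object by a sufficiently high syzygy (whose existence uses finite injective dimension of $A$ over itself), then shifts its internal grading, with the discrepancy absorbed into $\Perf(A)$. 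Along the way, one verifies exceptionality of $\pi A(-j)$ using the identity $\Hom_{\Dbgr(A)}(A(-j), M) = M_j$ together with a direct Ext computation, and exceptionality of $q\kk(-j)$ using the Gorenstein isomorphism $\RHom_A(\kk, A) \cong \kk(a)[-d]$.

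With both quotients identified inside $\Dbgr(A)_{\geq i}$, the comparison is governed by Gorenstein duality: it identifies $\Perf_{\geq i}(A)$ with $\msf{T}_{\geq i+a}$ inside $\Dbgr(A)_{\geq i}$ modulo admissible corrections, because the Gorenstein condition relates $A(-j)$ and $\kk(-j-a)$ up to shift and a perfect piece. When $a > 0$, the quotient $\Dsing(A)$ embeds into $\Dqgr(A)$ with complement spanned by the $a$ exceptional objects $\pi A(-i-a+1), \dots, \pi A(-i)$, yielding decomposition~(1); when $a < 0$, a symmetric analysis interchanging the roles of $\msf{T}$ and $\Perf(A)$ yields~(2); when $a = 0$ the two identifications coincide and $\pi$, $q$ together induce the equivalence~(3). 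The principal technical obstacle is the careful analysis of the internal-degree truncations: in particular, establishing that $\msf{T}_{\geq i}$ and $\Perf_{\geq i}(A)$ are admissible inside $\Dbgr(A)_{\geq i}$, so that the restrictions of $\pi$ and $q$ are fully faithful on the orthogonal complements. This step is the technical heart of the argument and relies essentially on the Gorenstein hypothesis, through both the existence of well-behaved minimal resolutions and the explicit form of Gorenstein duality.
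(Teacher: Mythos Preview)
Your proposal follows essentially the same route as Orlov's original argument and the paper's proof of its generalization (Theorem~\ref{main}): embed both $\Dqgr(A)$ and $\Dsing(A)$ into $\Dbgr(A)_{\ge i}$ as semiorthogonal summands $\mathcal{D}_i$ and $\mathcal{T}_i$, then compare their right orthogonals in $\Dbgr(A)$ and read off the discrepancy as an exceptional collection of length $|a|$.

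Two points of imprecision are worth flagging. First, your claim that the Gorenstein hypothesis ``first intervenes'' in the essential surjectivity of $\Dbgr(A)_{\ge i}/\Perf_{\ge i}(A) \to \Dsing(A)$ is off: that step needs only the existence of minimal free resolutions with finitely many generators in each internal degree (in the paper, Proposition~\ref{prop:semi-free} and Lemma~\ref{lemma:sod1}). One does not pass to a high syzygy and shift internal degree; rather, one splits a minimal resolution $F$ of $M$ as $F' \subset F$ with $F'$ the submodule generated in internal degrees $< i$, so that $F' \in \P_{<i}$ is perfect and $F/F' \in \Dbgr(A)_{\ge i}$. The Gorenstein hypothesis enters instead at the admissibility step: right admissibility of $\S_{\ge i}$ in $\Dbgr(A)_{\ge i}$ comes from condition~$\chi$ (via the right adjoint $\dGamma_{\ge i}$), and left admissibility of $\P_{\ge i}$ comes from the duality $\RHom_A(-,A)\colon \Dbgr(A^{\op}) \xra{\simeq} \Dbgr(A)^{\op}$. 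Second, your phrase ``identifies $\Perf_{\ge i}(A)$ with $\msf{T}_{\ge i+a}$'' is not what actually happens; the precise mechanism (Lemma~\ref{lemma:sod2}) is that Gorenstein duality exchanges $\S_{\ge i}^\perp$ with $^\perp\S^{\op}_{<-i-a+1}$, hence with $^\perp\P_{\ge i+a}$, yielding $\mathcal{D}_i^\perp = \langle \P_{\ge i+a}, \S_{<i}\rangle$ versus $\mathcal{T}_i^\perp = \langle \S_{<i}, \P_{\ge i}\rangle$, and the sign of $a$ determines which contains the other after swapping the two factors.
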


To realize Theorem~\ref{thm:cy} as a special case of Theorem~\ref{thm:orlov}, observe that when $A$ is the complete intersection ring $R$ above, we have $\Dqgr(A) = \Db(X)$; and if $X$ is Calabi-Yau, then, by definition, its canonical sheaf $\omega_X = \OO_X(\sum_{i = 1}^c d_i - n - 1)$ is trivial, i.e. $a = 0$.
This general form of the theorem has influenced not only algebraic geometry and commutative algebra, but representation theory as well, e.g. \cite{BIY,HIMO, IY, KLM, KMV, MU}. To illustrate the strength of Theorem~\ref{thm:orlov}, let us consider two simple families of examples. If $A$ has finite global dimension, then $\Dsing(A) = 0$; \Cref{thm:orlov} then implies that $a \ge 0$, and taking $i = 0$ gives the full exceptional collection $\Dqgr(A) = \langle\pi A(-a + 1), \dots, \pi A  \rangle$. In particular, if $A = \kk[x_0, \dots, x_n]$, and the degree of each variable is 1, then \Cref{thm:orlov} recovers the Beilinson exceptional collection $\Db(\PP^n) = \langle \OO(-n), \dots, \OO \rangle$~\cite{beilinson}. At the opposite extreme, if $\dim_\kk A < \infty$, then $\Dqgr(A)$ vanishes. \Cref{thm:orlov} implies in this case that $a \le 0$, and taking $i = 0$ gives the full exceptional collection $\Dsing(A) = \langle q\kk, \dots, q\kk(a + 1)\rangle $. 

\medskip

Our goal is to generalize Theorem~\ref{thm:orlov} to the setting of dg-algebras. Let $A$ be a differential bigraded $\kk$-algebra, i.e. a dg-algebra with both a cohomological grading and an ``internal" grading; see Definition~\ref{def:dga} for the precise definition and Notation~\ref{conventions} for an explanation of our indexing conventions. Denote by $\Dbgr(A)$ the bounded derived category of differential bigraded $A$-modules.
We define the quotients $\Dsg(A)$ and $\Dqgr(A)$ of $\Dbgr(A)$ exactly as above; when $A$ is as in Setup~\ref{setup2} below, the category $\Dqgr(A)$ is equivalent to $\Db(\A)$, where $\A$ is the sheaf of dg-algebras on $\Proj(A^0)$ associated to $A$ (Theorem~\ref{prop:sheaf}(2)).  When $A$ is Gorenstein (Definition~\ref{def:gorenstein}), there is an isomorphism $\RHom_A(\kk, A) \cong \kk(a)$ in $\Dbgr(A)$ (up to cohomological shift) for some $a \in \Z$, the Gorenstein parameter of $A$. The following generalization of Theorem~\ref{thm:orlov} is our main result:

\begin{thm}
\label{main}
Let $\kk$ be a field and $A$ a differential bigraded $\kk$-algebra (Definition~\ref{def:dga}) satisfying the conditions in Setup~\ref{setup}. Suppose $A$ is Gorenstein (Definition~\ref{def:gorenstein}) with Gorenstein parameter $a$, and assume the cohomological degree zero component $A^0$ of $A$ is either Gorenstein or strictly commutative (i.e. $xy = yx$ for all $x, y \in A^0$). Let $q \co \Dbgr(A) \to \Dsing(A)$ and $\pi \co \Dbgr(A) \to \Dqgr(A)$ denote the canonical functors. The objects $\pi A(j) \in \Dqgr(A)$ and $q \k(j) \in \Dsg(A)$ are exceptional for all $j \in \Z$, and we have:
\begin{enumerate}
\item If $a > 0$, then for each $i \in \Z$, there is a fully faithful functor $\Phi_i \colon \Dsing(A) \to \Dqgr(A)$ and a semiorthogonal decomposition
$
\Dqgr(A) = \langle \pi A(-i - a + 1), \dots, \pi A(-i), \Phi_i\Dsing(A) \rangle.
$
\item If $a < 0$, then for each $i \in \Z$, there is a fully faithful functor $\Psi_i \colon \Dqgr(A) \to  \Dsing(A)$ and a semiorthogonal decomposition
$
\Dsing(A) = \langle q \kk(-i), \dots, q \kk (-i+a+1), \Psi_i \Dqgr(A)\rangle.
$
\item If $a = 0$, then there is an equivalence $\Dsing(A) \xra{\simeq} \Dqgr(A)$.
\end{enumerate}

\end{thm}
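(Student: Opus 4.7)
The plan is to adapt Orlov's original argument to the differential bigraded setting. The structural ingredient is, for each $i \in \Z$, a thick subcategory $\D^{\geq i}(A) \subseteq \Dbgr(A)$ that serves as a common lift of both $\Dqgr(A)$ and $\Dsing(A)$ under the canonical functors $\pi$ and $q$. Concretely, let $\mathcal{T}_{<i}$ denote the thick subcategory of $\Dbgr(A)$ consisting of objects whose cohomology is finite dimensional over $\k$ and concentrated in internal degrees strictly less than $i$, and let $\P_{\geq i}$ denote the thick subcategory generated by the twists $A(-j)$ for $j \geq i$. I would then set $\D^{\geq i}(A) := \langle \P_{\geq i}, \mathcal{T}_{<i} \rangle$, viewed as a full subcategory of $\Dbgr(A)$.

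The first main step is to establish that the inclusions $\P_{\geq i} \hookrightarrow \D^{\geq i}(A)$ and $\mathcal{T}_{<i} \hookrightarrow \D^{\geq i}(A)$ fit into a semiorthogonal decomposition of $\D^{\geq i}(A)$. The required Hom-vanishing reduces via devissage to $\RHom_A(\k(m), A(-j))$ for appropriate $m$ and $j$, which are controlled by Gorenstein duality: the isomorphism $\RHom_A(\k, A) \simeq \k(a)$ (up to cohomological shift) pins down both the internal degree and cohomological degree in which such Ext-groups live, and the sign of the shift relative to $a$ governs their vanishing. The same duality computation simultaneously yields the exceptionality of $\pi A(j)$ and $q\k(j)$ claimed in the theorem.

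The second main step is to show that the canonical functors descend to equivalences $\D^{\geq i}(A)/\mathcal{T}_{<i} \iso \Dqgr(A)$ and $\D^{\geq i}(A)/\P_{\geq i} \iso \Dsing(A)$. Essential surjectivity rests on an internal truncation argument: every bounded differential bigraded $A$-module admits, modulo a finite-dimensional piece, a representative whose generators lie in sufficiently high internal degree and hence in $\D^{\geq i}(A)$. Full faithfulness follows from the orthogonality of the previous step. Combining these equivalences with the internal semiorthogonal decomposition of $\D^{\geq i}(A)$ then produces the decompositions claimed in parts (1)--(3): when $a > 0$, the two kernels $\mathcal{T}_{<i}$ and $\P_{\geq i}$ differ on an $a$-dimensional slice spanned by $A(-i-a+1), \dots, A(-i)$, yielding the exceptional collection on the $\Dqgr$ side and the fully faithful embedding $\Phi_i$ of the singularity category; the case $a < 0$ is symmetric with the roles of $\pi$ and $q$ reversed; and when $a = 0$ the two kernels align, so both projections from $\D^{\geq i}(A)$ yield the same quotient and part (3) follows immediately.

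The main obstacle will be the dg-analog of Gorenstein local duality needed throughout, together with the accompanying compactness and finiteness statements for differential bigraded modules. The hypothesis that $A^0$ is either Gorenstein or strictly commutative is the key lever: in the strictly commutative case, graded-commutative manipulations and standard Koszul-duality techniques produce the needed semifree resolutions, while in the Gorenstein-only case one must bootstrap from a dualizing complex at the base and verify that it lifts compatibly through the dg-structure on $A$. A secondary technical hurdle is ensuring that the internal truncation functors underlying the definition of $\D^{\geq i}(A)$ are well-behaved on the derived category of dg-modules, since naive strict truncation can destroy the differential. Once these foundational ingredients are in place, the remainder of the argument follows the pattern of Orlov's proof.
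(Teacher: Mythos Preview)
Your overall shape---a common roof inside $\Dbgr(A)$ carrying two semiorthogonal decompositions whose comparison yields the theorem---is correct and is exactly what the paper does. But your concrete implementation is off in two ways. First, the common roof is not $\langle \P_{\geq i}, \mathcal{T}_{<i}\rangle$; it is $\Dbgr(A)_{\geq i}$, the subcategory of objects concentrated in internal degrees $\geq i$. Inside it one has $\Dbgr(A)_{\geq i}=\langle \mathcal{D}_i,\S_{\geq i}\rangle=\langle \P_{\geq i},\mathcal{T}_i\rangle$, where $\mathcal{D}_i\simeq\Dqgr(A)$ and $\mathcal{T}_i\simeq\Dsg(A)$. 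Note that the torsion kernel for the $\Dqgr$ side is $\S_{\geq i}$, not your $\mathcal{T}_{<i}=\S_{<i}$; your category $\langle \P_{\geq i},\S_{<i}\rangle$ is (up to reordering) the \emph{orthogonal} $\mathcal{T}_i^\perp$ rather than the roof, and its quotients by $\P_{\geq i}$ and $\S_{<i}$ are not $\Dsg(A)$ and $\Dqgr(A)$. For instance, take $A=\k[x]/(x^2)$ with $i=0$: then $\S_{<0}=0$, so your $\D^{\geq 0}(A)=\P_{\geq 0}$ and its quotient by $\P_{\geq 0}$ is zero, while $\Dsg(A)\neq 0$. Your own ``essential surjectivity via internal truncation'' argument actually lands objects in $\Dbgr(A)_{\geq i}$, not in $\langle\P_{\geq i},\S_{<i}\rangle$.

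Second, and more seriously, you have misplaced the role of the hypothesis on $A^0$. Semifree resolutions with the required finiteness (the paper's Proposition~\ref{prop:semi-free}) need nothing beyond Setup~\ref{setup}; Koszul duality plays no role; and no dualizing complex is lifted through the dg-structure. The genuine obstacle is establishing the decomposition $\Dbgr(A)_{\geq i}=\langle \mathcal{D}_i,\S_{\geq i}\rangle$, i.e.\ showing $\S_{\geq i}$ is right admissible. This requires a fully faithful right adjoint to $\pi_i\colon \Dbgr(A)_{\geq i}\to\Dqgr(A)$, namely a truncated derived global sections functor $\dGamma_{\geq i}$, and the entire difficulty is proving that $\dGamma_{\geq i}$ lands in the \emph{bounded} derived category. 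That is where the $A^0$ hypothesis enters (via Artin--Zhang's condition $\chi$ and finite cohomological dimension of $\Proj$ in the commutative case, or via balanced dualizing complexes in the Gorenstein case); your sketch does not mention this functor at all. Without it, you have no semiorthogonal embedding of $\Dqgr(A)$ into $\Dbgr(A)_{\geq i}$, and the comparison with $\mathcal{T}_i$ cannot get started.
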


In fact, Theorem~\ref{main} holds in greater generality; see Remark~\ref{rem:bidual}. Let us describe how Theorem~\ref{main} gives the desired extension of Theorem~\ref{thm:cy} to an arbitrary projective variety $V(f_1, \dots, f_c) \subseteq \PP^n$. The Koszul complex on  $f_1, \dots, f_c$ is a dg-algebra satisfying all of the conditions in Theorem~\ref{main}, with Gorenstein parameter $n+1 - \sum_{i = 1}^c \deg(f_i)$~
(Example~\ref{ex:koszulparam}). Theorem~\ref{main} therefore implies that, when the ``Calabi-Yau" condition $n+1 = \sum_{i = 1}^c \deg(f_i)$ holds, we have $\Dqgr(K) \simeq \Dsg(K)$. When $f_1, \dots, f_c$ is a regular sequence, the categories $\Dqgr(K)$ and $\Dsg(K)$ coincide with those appearing in Theorem~\ref{thm:cy}. In general,  $\Dqgr(K)$ is equivalent to the bounded derived category of the sheafified Koszul complex over $\PP^n$ (Theorem~\ref{prop:sheaf}(2)).

\medskip
If $\dim_\kk H(A) < \infty$, then $\Dqgr(A) = 0$. As a consequence of  Theorem~\ref{main} (and Remark~\ref{rem:bidual}), we therefore have:

\begin{cor}
\label{cor:exc}
Let $\k$ be a field and $A$ a dg-$\k$-algebra as in Setup~\ref{setup} that is Gorenstein (Definition~\ref{def:gorenstein}) with Gorenstein parameter $a$. Recall that $q \co \Dbgr(A) \to \Dsg(A)$ denotes the canonical functor. If $\dim_\k H(A) < \infty$, then $a \le 0$, and, for all $i \in \Z$, the singularity category $\Dsg(A)$ is generated by the exceptional collection $q \k(-i), \dots, q \k(-i + a + 1)$. 
\end{cor}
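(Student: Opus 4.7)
The plan is to derive the corollary directly from Theorem~\ref{main} by establishing that the hypothesis $\dim_\k H(A) < \infty$ forces $\Dqgr(A) = 0$, after which each of the three cases of Theorem~\ref{main} collapses to the claimed statement.

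The first step is to verify that $\Dqgr(A) = 0$. Since $\Dbgr(A)$ is the bounded derived category of suitably finite differential bigraded $A$-modules (as established in the paper's Setup~\ref{setup}), the cohomology of any object of $\Dbgr(A)$ is finitely generated over $H(A)$. When $\dim_\k H(A) < \infty$, this forces the total cohomology of every object of $\Dbgr(A)$ to be finite-dimensional over $\k$. Such objects are precisely those inverted in the Verdier quotient defining $\Dqgr(A)$, so every object of $\Dbgr(A)$ is sent to zero, i.e.\ $\Dqgr(A) = 0$. This is the only step requiring any direct interaction with the definition of $\Dbgr(A)$, and is the only place where a subtle issue could arise; I expect it to follow from the finiteness hypotheses in Setup~\ref{setup} with no real work.

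With $\Dqgr(A) = 0$ in hand, the remaining assertions follow by inspection of Theorem~\ref{main}. If $a > 0$, part (1) would present $\Dqgr(A) = 0$ as a semiorthogonal decomposition one of whose summands is generated by $\pi A(-i)$, which the theorem asserts to be exceptional and in particular nonzero; this contradiction forces $a \le 0$. In the case $a < 0$, part (2) gives
$$
\Dsg(A) = \langle q\k(-i), \dots, q\k(-i+a+1), \Psi_i \Dqgr(A)\rangle,
$$
and since $\Psi_i$ is fully faithful and $\Dqgr(A) = 0$, the final summand vanishes, producing the desired exceptional generating collection. Finally, if $a = 0$, part (3) yields $\Dsg(A) \simeq \Dqgr(A) = 0$, and the purported collection $q\k(-i), \dots, q\k(-i+1)$ is empty and trivially generates the zero category, matching the claim.
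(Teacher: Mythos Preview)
Your proposal is correct and matches the paper's approach exactly: the paper derives the corollary by the single observation (stated just before the corollary) that $\dim_\k H(A) < \infty$ forces $\Dqgr(A) = 0$, after which the result is immediate from Theorem~\ref{main}. Your write-up simply fills in the details of that deduction, including the case analysis, and contains no gaps.
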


Corollary~\ref{cor:exc} yields a host of examples of singularity categories of dg-algebras with full exceptional collections; such singularity categories may also be identified with stable categories of Cohen-Macaulay modules by (a bigraded version of) a result of Jin~\cite[Theorem 0.3(4)]{Jin_2020}. For instance, if $f_1, \dots, f_c \in \k[x_0, \dots, x_n]$ generate an $(x_0, \dots, x_n)$-primary ideal, then, by Corollary~\ref{cor:exc}, the singularity category of the Koszul complex on this sequence is generated by an exceptional collection of $\sum_{i = 1}^c \deg(f_i) - n - 1$ objects.  
(By a result of Raedschelders-Stevenson, every dg-algebra as in Corollary~\ref{cor:exc} is quasi-isomorphic to a dg-algebra that is finite dimensional as a $\k$-vector space~\cite[Corollary 3.12]{RS}.) However, if $A$ is as in Theorem~\ref{main}, and $\Dsing(A) = 0$, then it follows from a result of J{\o}rgensen~\cite{Jorgensen_amplitude} that $A$ is concentrated in cohomological degree 0: see Remark~\ref{rem:jorgensen}. 

As in the case of Theorem~\ref{thm:orlov}, the exceptional collection appearing in Theorem~\ref{main}(2) need not be strong. However, while the exceptional collection in Theorem~\ref{thm:orlov}(1) is always strong, the exceptional collection in Theorem~\ref{main}(1) is \emph{not} necessarily strong: see Remark~\ref{rem:strong}, where we describe exactly when the exceptional collections in Theorem~\ref{main}(1) and (2) are strong. 

As an application, we show that the Lattice Conjecture (\Cref{conj:lattice}) in noncommutative Hodge theory holds for the bounded derived and singularity categories (of both $\Z^2$-graded and $\Z$-graded modules) associated to dg-algebras as in Corollary~\ref{cor:exc}: see Theorem~\ref{thm:lattice} for the precise statement, and see Section~\ref{sec:lattice} for background on this conjecture.

Let us give an overview of the paper. In Section~\ref{sec:dga}, we provide background on differential bigraded algebras and their derived categories. A key technical point in this section is the construction of semi-free resolutions with certain finiteness properties: see Proposition~\ref{prop:semi-free}. We also discuss in this section several families of examples of Gorenstein dg-algebras to which Theorem~\ref{main} applies: see Subsection~\ref{sec:gorenstein}.
Section~\ref{sec:qgr} is the technical heart of the paper, which is devoted to generalizing several aspects of Artin-Zhang's noncommutative projective geometry to the context of dg-algebras. We prove Theorem~\ref{main} in Section~\ref{sec:proof}, and we apply Corollary~\ref{cor:exc} to obtain new cases of the Lattice Conjecture in Section~\ref{sec:lattice}.

\subsection*{Acknowledgments.} 
We are grateful to Benjamin Briggs, Liran Shaul, Greg Stevenson, and Mark Walker for helpful conversations during the preparation of this paper. We also thank the referee for many suggestions that improved the paper, in particular for observing that Proposition~\ref{prop:faithful} can be generalized (see Remark~\ref{rem:bidual1}), and also for suggesting the question in Remark~\ref{rem:question}.

\begin{notation}
\label{conventions}
Throughout, $\kk$ denotes a field. We will consider bigraded $\kk$-vector spaces $V = \bigoplus_{(i, j) \in \Z^2} V_i^j$. The superscript will typically denote a cohomological grading, while the subscript will refer to an ``internal" grading. 
Given a homogeneous element $v \in V$, we let $\bideg(v) \in \Z^2$ denote its bidegree, while $\deg(v)$ (resp. $|v|$) denotes its internal (resp. cohomological) degree. Given an integer $m$, we will denote the $m^{\th}$ shift of $V$ in internal (resp. cohomological) degree by $V(m)$ (resp. $V[m]$). That is, $V(m)_i^j = V_{i + m}^j$, and $V[m]_i^j = V_i^{j + m}$.
\end{notation}

\section{Resolutions and derived categories over dg-algebras} 
\label{sec:dga}

In this section, we recall some background on differential bigraded algebras and their derived categories, and we establish several technical facts in this setting that we will need along the way.  

\begin{dfn}
\label{def:dga}
A \emph{differential bigraded $\kk$-algebra} is a bigraded $\kk$-algebra $A = \bigoplus_{(i, j) \in \Z^2} A_i^j$ equipped with a degree $(0, 1)$
$\kk$-linear map $\del_A$ that squares to 0 and satisfies the Leibniz rule:
$$
\del_A(xy) = \del_A(x)y + (-1)^{|x|}x\del_A(y).
$$
We say $A$ is \emph{graded commutative} if $xy = (-1)^{|x||y|}yx$ for all homogeneous $x, y \in A$. The \emph{opposite dg-algebra of $A$}, denoted $A^{\op}$, is the same as $A$ as a bigraded $\k$-module and has the same differential as $A$, and its multiplication is given by $x * y \ce (-1)^{|x||y|}yx$. A \emph{morphism} of differential bigraded $\kk$-algebras is a degree (0,0) $\k$-algebra homomorphism that commutes with differentials. Such a morphism is a \emph{quasi-isomorphism} if it induces an isomorphism on cohomology. 
\end{dfn}

Henceforth, we will refer to differential bigraded $\kk$-algebras as simply ``dg-algebras". We denote by $A^\nat$ the underlying bigraded $\kk$-algebra of a dg-algebra $A$. 

\begin{dfn}
Let $A$ be a dg-algebra. A right (resp. left) \emph{dg-$A$-module} is a bigraded right (resp. left) $A^\nat$-module $M = \bigoplus_{(i, j) \in \Z^2} M_i^j$ equipped with a degree $(0, 1)$ $\kk$-linear map $\del_M$ that squares to~0 and satisfies the Leibniz rule:
$$
\del_M(mx) = \del_M(m)x + (-1)^{|m|}m\del_A(x)
\quad
(\text{resp. }\del_M(xm) = \del_A(x)m + (-1)^{|x|}x\del_M(m)).
$$
All modules are assumed to be right modules unless otherwise noted. A \emph{submodule} of a dg-module $M$ is an $A^{\nat}$-submodule that is also a subcomplex. A dg-module $M$ is said to be \emph{finitely generated} if the underlying bigraded $A^{\nat}$-module is finitely generated. A \emph{morphism} of dg-$A$-modules is a degree $(0,0)$ $A^{\nat}$-linear map that commutes with differentials. Such a morphism is a \emph{quasi-isomorphism} if it induces an isomorphism on cohomology. A \emph{homotopy} between a pair of morphisms $f, g \colon M \to N$ of dg-$A$-modules is a degree $(0, -1)$ $A^{\nat}$-linear map $h$ such that $f - g = hd_M + d_Nh$. Let $\Hom_A(M, N)$ denote the set of dg-$A$-module morphisms from $M$ to $N$. Given a pair $A, B$ of dg-algebras, an \emph{$A$-$B$-bimodule} is a right $A^{\op} \otimes_{\kk} B$-module. 
\end{dfn}

\begin{notation}
\label{gradednotation}
Given a dg-$A$-module $M$, we set $M_i \coloneqq \bigoplus_{j \in \Z} M_i^j$ for all $i \in \Z$. Notice that $A_0$ is a dg-algebra (concentrated in internal degree 0), and each $M_i$ is a dg-$A_0$-module (concentrated in internal degree $i$). Similarly, we set $M^j \coloneqq \bigoplus_{i \in \Z} M^j_i$ for all $j \in \Z$.
\end{notation}

\begin{dfn}\label{def:connected}
 A dg-algebra $A$ is \emph{connected} if 
$A_0 = A_0^0 = \k$,
and $A_i^j = 0$ when $i<0$ or $j > 0$.
\end{dfn}

Notice that, if $A$ is connected, then $A_0 = \k$ is a dg-$A$-module.

\begin{rem}\label{rem: quasi-iso}
Suppose $A$ is a dg-algebra such that $A_0 = A_0^0 = \k$, and $H^j(A)_i = 0$ for $i < 0$ and $j > 0$. In this case, the dg-algebra $B \ce \bigoplus_{i \ge 0, j \le 0} A_i^j$ is connected, and the canonical map $B \into A$ is a quasi-isomorphism of dg-algebras. 
\end{rem}

\begin{remark}
\label{A0}
If $A$ is a connected dg-algebra, then $A^0$ is a nonnegatively $\Z$-graded ring, and the differential on any dg-$A$-module is $A^0$-linear. 
\end{remark}

\begin{remark}
\label{remark:smart}
Let $A$ be a connected dg-algebra and $M$ a dg-$A$-module. For $i \in \Z$, we have truncations
\begin{align*}
\sigma^{\ge i} M & \ce \left( \cdots \to 0 \to M^i / \im(d_M^{i - 1}) \to M^{i+1} \to \cdots \right), \\ 
\sigma^{\le i} M & \ce \left( \cdots \to M^{i-1} \to\ker(d^i_M) \to 0 \to \cdots \right).
\end{align*}
Since $A$ is connected, both $\sigma^{\ge i} M$ and $\sigma^{\le i} M$ are dg-$A$-modules. The natural map $M \to \sigma^{\ge i} M$ (resp. $\sigma^{\le i} M \to M$) induces an isomorphism on cohomology in degrees at least $i$ (resp. at most $i$). 
\end{remark}

Throughout the paper, we will work under the following setup:
\begin{setup}
\label{setup}
Let $A$ be a dg-algebra (in the sense of Definition~\ref{def:dga}) that is connected (Definition~\ref{def:connected}) and such that $H^0(A)$ is Noetherian and the total cohomology algebra $H(A)$ is finitely generated as an $H^0(A)$-module. 
\end{setup}

\begin{example}
\label{ex:koszulsetup}
Let $S$ be a commutative, nonnegatively $\Z$-graded $\kk$-algebra such that $S_0 = \k$. Let $f_1, \dots, f_c \in S$ be homogeneous of positive degree, and assume $S / (f_1, \dots, f_c)$ is Noetherian. The Koszul complex $K$ on $f_1, \dots, f_c \in S$ is a dg-algebra with all of the properties in Setup~\ref{setup}. In more detail: the underlying bigraded module of $K$ is $\bigwedge_SF$, where $F$ is a bigraded free $S$-module with basis $e_1,\dots,e_c$ such that $\bideg(e_i)=(\deg(f_i),-1)$. The algebra structure is given by the exterior product, and the differential is given by sending $e_{i_1}\wedge \dots \wedge e_{i_j}$ to $\sum_{l=1}^j (-1)^{l-1}f_{i_l}e_{i_1}\wedge \cdots \wedge \widehat{e_{i_l}} \wedge \cdots \wedge e_{i_j}$. See Section~\ref{sec:gorenstein} for several additional examples of dg-algebras as in Setup~\ref{setup}.
\end{example}

The following observation is elementary:

\begin{prop}
\label{prop:elementary}
If $A$ is as in Setup~\ref{setup}, then $\dim_\k H(A)_i < \infty$ for all $i$ (see Notation~\ref{gradednotation}). If the homogeneous maximal ideal of $A$ is finitely generated, then $\dim_\k A_i <~\infty$. 
\end{prop}

Given a dg-algebra $A$, a right dg-$A$-module $M$, and a left dg-$A$-module $N$, the tensor product $M \otimes_A N$ is a dg-$\kk$-module with differential $m \otimes n \mapsto d_M(m) \otimes n + (-1)^{|m|}m \otimes d_N(n)$. If $M$ (resp. $N$) is an $A$-$A$-bimodule, then $M \otimes_A N$ is a left (resp. right) dg-$A$-module.
Similarly, given right dg-$A$-modules $M$ and $N$, we may form the internal Hom object $\underline{\Hom}_A(M, N)$, which is a dg-$\kk$-module with underlying $\kk$-module $\lhom_{A^{\nat}}(M, N)$ and differential $\alpha \mapsto d_N \alpha - (-1)^{|\alpha|}\alpha d_M$. A map in $\underline{\Hom}_A(M, N)$ of bidegree $(0,0)$ is a cocycle if and only if it is a morphism of dg-$A$-modules, and it is a coboundary if and only if it is a null-homotopic such morphism. Notice the distinction between $\underline{\Hom}_A(M, N)$ and $\Hom_A(M, N)$; the latter is the set of bidegree $(0,0)$ cocycles in the former. If $M$ (resp. $N$) is an $A$-$A$-bimodule, then $\underline{\Hom}_A(M, N)$ is a right (resp. left) dg-$A$-module.

\subsection{Derived categories}
\label{sec:derived}
For a dg-algebra $A$, let $\Mod_{\on{gr}}(A)$ denote the category of dg-$A$-modules, $\Modf_{\on{gr}}(A)$ the full subcategory of $\Mod_{\on{gr}}(A)$ given by dg-$A$-modules $M$ such that $H(M)$ is finitely generated over $H(A)$, and $\mod_{\on{gr}}(A)$ the full subcategory of $\Modf(A)$ given by dg-$A$-modules that are finitely generated over $A$; the subscript $``\gr"$ indicates that we consider dg-modules that have both a cohomological and internal grading. We form the (triangulated) derived categories $\D_{\on{gr}}(A)$, $\Df(A)$, and $\D^b(A)$ by inverting quasi-isomorphisms in $\Mod_{\on{gr}}(A)$, $\Modf_{\on{gr}}(A)$, and $\mod_{\on{gr}}(A)$, respectively. For a construction of the derived category of a dg-algebra, see \cite[\href{https://stacks.math.columbia.edu/tag/09KV}{Tag 09KV}]{stacks-project}.

\begin{example}
\label{ex:trivial}
Let $A$ be a dg-algebra, and assume $|a| = 0$ for all $a \in A$; that is, assume the cohomological grading on $A$ is trivial. In particular, $A$ has trivial differential. Let $A'$ denote the $\Z$-graded algebra obtained from $A$ by forgetting its (trivial) cohomological grading. Notice that, for all $M \in \Mod(A)$ and $j \in \Z$, the $k$-vector space $M^j$ (see Notation~\ref{gradednotation}) is naturally a graded $A'$-module. Letting $\Com(A')$ denote the category of complexes of $\Z$-graded $A'$-modules, we therefore have an isomorphism of categories 
$
F \co \Mod_{\on{gr}}(A) \xra{\cong} \Com(A')
$
that sends a dg-module $(M, d_M)$ to the complex $F(M)$ with $F(M)^j = M^j$ and differential $d_M$. Under this isomorphism, $\Modf(A)$ corresponds to complexes whose total cohomology is finitely generated over $A'$, and $\mod(A)$ corresponds to bounded complexes of finitely generated graded $A'$-modules.
\end{example}

\begin{remark}
\label{rem:bounded}
If $A$ is as in Setup~\ref{setup}, then every object $M \in \Df(A)$ satisfies $H^i(M) = 0$ for $|i| \gg 0$.
\end{remark}

\begin{defn}
Let $A$ be a dg-algebra. A dg-$A$-module $L$ is \emph{$K$-projective} (resp. \emph{$K$-injective}) if the dg-$\kk$-module $\underline{\Hom}_A(L, N)$ (resp. $\underline{\Hom}_A(N, L))$ is exact for all exact right dg-$A$-modules $N$. We say $L$ is \emph{$K$-flat} if $L \otimes_A N$ is exact for all exact left dg-$A$-modules $N$. A \emph{$K$-projective resolution} of a dg-$A$-module $M$ is a quasi-isomorphism $P \xra{\simeq} M$, where $P$ is $K$-projective. $K$-injective and $K$-flat resolutions are defined similarly; such resolutions exist by \cite[Section 3]{Keller1994}.
\end{defn}

Given a pair $M$ and $N$ of dg-modules over a dg-algebra $A$, $K$-flat resolutions $F^M$ and $F^N$ of $M$ and $N$, a $K$-projective resolution $P$ of $M$, and a $K$-injective resolution $I$ of $N$, the derived tensor product of $M$ and $N$ and derived Hom from $M$ to $N$ may be modeled as follows:
$$
M \otimes_A^{\LL} N \cong F^M \otimes_A N \cong M \otimes_A F^N, \quad  \RHom_A(M, N) \cong \underline{\Hom}_A(P, N) \cong \underline{\Hom}_A(M, I);
$$
where the isomorphisms are in the derived category $\D_{\on{gr}}(A)$. We have $$
\underline{\Ext}^i_A(M, N) \coloneqq H^i\RHom_A(M, N),
 \quad \text{and} \quad 
 \Tor^A_i(M, N) \coloneqq H^{-i}(M \otimes_A^{\LL} N).
$$

Tor and $\underline{\Ext}$ over a dg-algebra can be computed via semi-free resolutions: 

\begin{dfn}
\label{def:semi-free}
A dg-$A$-module $G$ is called \emph{free} if it is isomorphic, as a dg-module, to a direct sum of copies of $A(i)[j]$ for various $(i, j) \in \Z^2$.
The dg-$A$-module $G$ is called \emph{semi-free} if it may be equipped with an increasing, exhaustive filtration $F^\bu G$ by dg-submodules such that $F^i G = 0$ for $i < 0$ and each dg-module $F^i G / F^{i-1} G$ is free. Given a dg-$A$-module $M$, a \emph{semi-free resolution} of $M$ is a quasi-isomorphism $G \xra{\simeq} M$, where $G$ is semi-free. A dg-$A$-module $M$ is called \emph{perfect} if it admits a semi-free resolution $G$ that is finitely generated as an $A^\nat$-module. Let $\Perf(A)$ be the subcategory of $\Dbgr(A)$ given by perfect objects.
\end{dfn}

\begin{remark}
\label{rem:perf}
It is elementary to show that $\Perf(A)$ is the thick subcategory of $\Dbgr(A)$ generated by the objects $\{A(j)\}_{j \in \Z}$.
\end{remark} 

Given a dg-algebra $A$, it is well-known that semi-free dg-$A$-modules are $K$-projective, and $K$-projective dg-$A$-modules are $K$-flat. In particular, if $M$ and $N$ are dg-$A$-modules, and $F \iso M$ is a semi-free resolution, then 
$$
\underline{\Ext}^i_A(M, N) \cong H^i(\underline{\Hom}_A(F, N)), \quad \text{and} \quad \Tor^A_i(M, N) \cong H^{-i}(F \otimes_A N).
$$
We will need the existence of semi-free resolutions with certain finiteness properties:

\begin{prop}[cf. \cite{avramov1997differential} Chapter 5, Theorem 2.2]
\label{prop:semi-free}
Let $A$ be as in Setup~\ref{setup}. 
If $M \in \Modf_{\gr}(A)$, then there exists a semi-free resolution $G$ of $M$ with the following properties:
\begin{enumerate}
\item $G_i = 0$ for $i \ll 0$.
\item $G^j = 0$ for $j \gg 0$.
\item If the homogeneous maximal ideal of $A$ is finitely generated, then $\dim_\k G_i < \infty$ for all $i \in \Z$.
\item Given $j \in \Z$, let $G^{\le j}$ denote the $A^\nat$-submodule $\bigoplus_{\ell \le j} G^\ell$ of $G$; the $A^\nat$-module $G / G^{\le j}$ is finitely generated.
\end{enumerate}
(See Notation~\ref{gradednotation} for the definitions of $G_i$ and $G^j$.)
\end{prop}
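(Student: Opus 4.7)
The plan is to follow the classical ``killing cycles'' construction of semi-free resolutions (cf.\ Avramov), building $G$ as the union $\bigcup_{n \ge 0} G(n)$ of an exhaustive chain $0 = G(-1) \subseteq G(0) \subseteq G(1) \subseteq \cdots$ of dg-submodules with each $G(n)/G(n-1)$ free and finitely generated over $A^\nat$, equipped with compatible maps $\phi_n \co G(n) \to M$ converging to a quasi-isomorphism. The filtration $F^n G \ce G(n)$ then witnesses semi-freeness.

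For the base case, since $A$ is connected and $H(A)$ is a finitely generated module over the Noetherian ring $H^0(A)$, the cohomology $H(A)$ is bounded above in cohomological degree and bounded below in internal degree. Combined with the assumption that $H(M)$ is finitely generated over $H(A)$, this forces $H(M)$ to also be bounded above in cohomological degree and bounded below in internal degree. Setting $j_0 \ce \sup\{j : H^j(M) \ne 0\}$, I would choose finitely many homogeneous cycles $z_1, \dots, z_n \in M$ whose classes generate $H(M)$ as an $H(A)$-module and take $G(0) = \bigoplus_i A \cdot e_i$ with $d(e_i) = 0$, $\bideg(e_i) = \bideg(z_i)$, and $\phi_0(e_i) = z_i$; this yields a surjection $H(G(0)) \twoheadrightarrow H(M)$ with all generators in cohomological degrees $\le j_0$.

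For the inductive step, given $\phi_n \co G(n) \to M$ with $G(n)$ finitely generated and semi-free, I would form $K_n \ce \ker(H(G(n)) \to H(M))$. A long-exact-sequence argument along the filtration of $G(n)$ shows $H(G(n))$ is finitely generated over $H(A)$. Since $H^0(A)$ is Noetherian and $H(A)$ is finitely generated over $H^0(A)$, every submodule of a finitely generated $H(A)$-module is finitely generated (the submodule is already finitely generated over the Noetherian ring $H^0(A)$), so $K_n$ is finitely generated. Selecting finitely many homogeneous cycles $w_k \in G(n)$ representing generators of $K_n$ and choosing $v_k \in M$ with $d_M(v_k) = \phi_n(w_k)$, I would define $G(n+1) = G(n) \oplus \bigoplus_k A \cdot f_k$ with $\bideg(f_k) = (\deg w_k, |w_k|-1)$, $d(f_k) = w_k$, and $\phi_{n+1}(f_k) = v_k$. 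To obtain the control needed for (4), I would stratify the construction so that at stage $n \ge 1$ one kills only kernel classes in cohomological degree $\ge j_0 - n + 1$, forcing new generators at stage $n$ to lie in cohomological degrees $\ge j_0 - n$.

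The main obstacle is precisely this scheduling for property (4): one must guarantee that for every $j$, only finitely many generators of $G$ lie in cohomological degree $> j$, so that $G/G^{\le j}$ is finitely generated over $A^\nat$. With the stratification above, only the first $j_0 - j$ stages contribute generators in cohomological degree $> j$, each contributing finitely many, which gives (4); (2) follows because no generator ever exceeds cohomological degree $j_0$; (1) and (3) follow because the generators introduced at each stage lie in internal degrees bounded below (inherited from $H(M)$ and closed under the finite-dimensional $A_i$'s) and finitely many generators appear in each internal degree, combined with $\dim_\k A_i < \infty$ from Setup~\ref{setup}. The Noetherian hypothesis on $H^0(A)$ is used essentially at every stage to ensure $K_n$ is finitely generated; without it the construction would still yield a semi-free resolution, but without the finiteness conditions (3) and (4) required here.
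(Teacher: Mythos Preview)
Your overall strategy—an inductive killing-cycles construction with a filtration witnessing semi-freeness—is the same as the paper's, but the two proofs are organized around \emph{different} stratifications, and this difference matters.

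The paper stratifies by \emph{internal} degree: at stage $n$ it adds generators only in internal degree $n$, arranging the partial map to be a quasi-isomorphism in internal degrees $\le n$. Concretely, it sets $F^0G = H(M)_0 \otimes_\k A$ (after normalizing so that $H(M)$ starts in internal degree $0$) and then iterates with the cone. This makes (1) and (3) immediate: each stage contributes finitely many generators, all in a single internal degree, so $G_i$ is a finite sum of finite-dimensional pieces. The paper then observes that the cohomological degrees of the new generators also strictly drop from stage to stage, which gives (2) and~(4).

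You stratify by \emph{cohomological} degree: at stage $n$ you kill only kernel classes in cohomological degree $\ge j_0 - n + 1$. This does give (2) and (4) cleanly, as you say. But your justification of (1) and (3) is the weak point. You assert that ``finitely many generators appear in each internal degree,'' yet nothing in your scheduling forces this: the $w_k$ you kill at stage $n$ are constrained only in cohomological degree, so their internal degrees are a priori uncontrolled, and new cycles produced by the added $f_k$'s can in principle reappear in the same internal degree at later stages. Since $A_0=\k$, the internal-degree-$d$ slice $G_d$ is governed entirely by the (possibly growing) set of generators of internal degree $\le d$, and your argument does not rule out unbounded growth there. The phrase ``inherited from $H(M)$ and closed under the finite-dimensional $A_i$'s'' does not supply the needed finiteness.

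In short: your proof is correct in outline but has a genuine gap at (3). The cleanest fix is exactly what the paper does—organize the induction so that stage $n$ handles internal degree $n$, making (1) and (3) automatic—and then check that cohomological degrees behave well enough for (4). Alternatively, one could run a double stratification (by both internal and cohomological degree simultaneously), but that is more bookkeeping than the paper's approach requires.
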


\begin{proof}
We build $G$ inductively. This is trivial if $M$ is exact, so assume otherwise. Since $A$ is connected, $H(M)$ is finitely generated over $H(A)$, and $H(A)$ is Noetherian; we have $H(M)_i = 0$ for $i \ll 0$, $H(M)^j = 0$ for $j \gg 0$, and $\dim_\k H(M)_i < \infty$ for all $i \in \Z$. Let $m \coloneqq \min\{i \text{ : } H(M)_i\neq 0\}$ and $n \ce \max\{j \text{ : } H(M)^j \ne 0\}$; replacing $M$ with $M(m)[n]$, we may assume that $m = n = 0$. Consider $H(M)_0$ as a bigraded $\k$-vector space (concentrated in internal degree $0$), and let $F^0 G$ be the free dg-$A$-module $H(M)_0 \otimes_{\k} A$. Choose a homogeneous basis $x_1, \dots, x_t$ of $H(M)_0$ and lifts $\widetilde{x_i}$ to cocycles in $M$.
Notice that $F^0G$ has a basis $x_1 \otimes 1, \dots, x_t \otimes 1$ as a free $A^\nat$-module, and $\bideg(x_i \otimes 1) = (0, j_i)$ for some $j_i \le 0$. Let $\varepsilon_0 \colon F^0 G \to M$ be the morphism of dg-$A$-modules given by $x_i \otimes 1 \mapsto \widetilde{x_i}$. The map $\varepsilon_0$ induces an isomorphism on cohomology in internal degrees $\le 0$, i.e. $\cone(\varepsilon_0)$ has cohomology concentrated in internal degrees $> 0$.  Now apply this same construction with $\cone(\varepsilon_0)$ playing the role of $M$ to build a free dg-$A$-module $F^{1} G$ and a map $\varepsilon_{1} \colon F^{1} G \to \cone(\epsilon_0)$ that is a quasi-isomorphism in internal degrees at most $1$. Iterating this procedure gives a semi-free resolution $G \iso M$ with the desired properties. Indeed, (1) and (2) follow since the summands $A(i, j)$ of $G$ all satisfy $i \le 0$ and $j \ge 0$. Observe that each $F^nG$ is finitely generated over $A$; and if $A(i, j)$ is a summand of $F^{n+1} G / F^n G$, and $A(i', j')$ is a summand of $F^nG$, then $i < i'$ and $j > j'$. That is, both components of the bidegrees of the generators of the free summands we add in each step of the construction strictly increase in absolute value. Part (3) therefore follows from Proposition~\ref{prop:elementary}, and (4) follows immediately as well. 
\end{proof}

As a consequence of Proposition~\ref{prop:semi-free}, we have:

\begin{prop}
\label{dbdf}
Let $A$ be as in Setup~\ref{setup}. The canonical map $\Dbgr(A) \to \Df(A)$ is an equivalence.
\end{prop}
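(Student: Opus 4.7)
The plan is to prove that $\Dbgr(A) \to \Df(A)$ is both essentially surjective and fully faithful, with the key input being Proposition~\ref{prop:semi-free} and, specifically, its finiteness property~(4). The essential idea is that every $M \in \Modf_{\on{gr}}(A)$ admits a semi-free resolution that, after an appropriate cohomological truncation, produces a finitely generated quasi-isomorphic replacement.

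For essential surjectivity, let $M \in \Modf_{\on{gr}}(A)$ and take a semi-free resolution $G \xra{\simeq} M$ as in Proposition~\ref{prop:semi-free}, so in particular $G^j = 0$ for $j > s$ (some $s$) and $G/G^{\le j}$ is finitely generated over $A^\nat$ for every $j$. By Remark~\ref{rem:bounded}, choose $r$ with $H^j(M) = 0$ for $j < r$. Then $\sigma^{\ge r} G$ is a dg-$A$-module by Remark~\ref{remark:smart}, the canonical map $G \to \sigma^{\ge r} G$ is a quasi-isomorphism, and $\sigma^{\ge r} G$ is concentrated in cohomological degrees $[r, s]$. Since $G^{\le r-1}$ maps to zero in $\sigma^{\ge r} G$, the map factors as a surjection of bigraded $A^\nat$-modules $G/G^{\le r-1} \twoheadrightarrow \sigma^{\ge r} G$ (which is $A^\nat$-linear because $\im d^{r-1}_G$ is stable under the action of $A^0$, using that $A$ is connected and so $d_A|_{A^0} = 0$). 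Proposition~\ref{prop:semi-free}(4) yields that $G/G^{\le r-1}$ is finitely generated, hence so is $\sigma^{\ge r} G$. Thus $\sigma^{\ge r} G \in \mod_{\on{gr}}(A)$ and represents $M$ in $\Df(A)$.

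For fully faithfulness, I would argue that for $M, N \in \mod_{\on{gr}}(A)$ the Hom space in each of $\Dbgr(A)$, $\Df(A)$, and $\D_{\on{gr}}(A)$ coincides with $H^0(\uHom_A(P, N))$ for a semi-free resolution $P \xra{\simeq} M$ from Proposition~\ref{prop:semi-free}, using that semi-free modules are $K$-projective. This uniform computation shows that both canonical functors from $\Dbgr(A)$ and $\Df(A)$ into $\D_{\on{gr}}(A)$ are fully faithful, and combined with essential surjectivity gives the equivalence. The main obstacle is producing a finitely generated dg-$A$-module representative of $M$; this hinges entirely on property~(4) of Proposition~\ref{prop:semi-free}, which is the precise reason the proposition is stated in that strong form. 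A secondary technical point, that $\sigma^{\ge r} G$ is actually a dg-$A$-module and that the quotient map from $G/G^{\le r-1}$ is $A^\nat$-linear, relies crucially on the connectedness of $A$.
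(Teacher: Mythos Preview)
Your proposal is correct and takes essentially the same approach as the paper. The paper produces the finitely generated replacement by quotienting $G$ by the exact dg-submodule $G' = G^{\le J} + d(G^{\le J})$, which is exactly your smart truncation $\sigma^{\ge J+1} G$; both then invoke property~(4) of Proposition~\ref{prop:semi-free} to conclude finite generation, and the paper simply declares full faithfulness ``clear'' where you supply the $K$-projective argument.
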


\begin{proof}
Let $M \in \Df(A)$. Since $H(M)$ is finitely generated over $H(A)$, we may choose $J \ll 0$ so that $H^j(M) = 0$ for $j \le J$. Choose a semi-free resolution $G$ of $M$ as in Proposition~\ref{prop:semi-free}. 
Denote by $d$ the differential on $G$, and set $G' = G^{\le J} + d(G^{\le J})$, where we adopt the notation of Proposition~\ref{prop:semi-free}(4). Notice that $G'$ is a dg-submodule of $G$, and it is exact. The map $G \to G / G'$ is therefore a quasi-isomorphism, and $G/G'$ is finitely generated over $A^{\nat}$ by Property (4) from Proposition~\ref{prop:semi-free}. Thus, the canonical map $\Dbgr(A) \to \Df(A)$ is essentially surjective, and it is clear that it is fully faithful.
\end{proof}

\subsection{Gorenstein dg-algebras}
\label{sec:gorenstein}

\begin{defn}
\label{def:gorenstein}
Let $A$ be as in Setup~\ref{setup}. We say $A$ is \emph{Gorenstein} if:
\begin{enumerate}
\item The functor $\RHom_A( - , A)$ maps $\Dbgr(A)$ to $\Dbgr(A^{\op})$, and the functor $\RHom_{A^{\op}}( - , A)$ maps $\Dbgr(A^{\op})$ to $\Dbgr(A)$.
\item Given $M \in \Dbgr(A)$ and $N \in \Dbgr(A^{\op})$, the canonical maps
$$
M \to \RHom_{A^{\op}}(\RHom_A(M, A), A) \quad \text{and} \quad N \to \RHom_{A}(\RHom_{A^{\op}}(N, A), A)
$$
are isomorphisms in $\D_{\on{gr}}(A)$ and $\D_{\on{gr}}(A^{\op})$, respectively.
\item There is an isomorphism $\RHom_A(\k, A) \cong \k(a)[n]$ in $\D_{\on{gr}}(A)$ for some $a, n \in \Z$. 
\end{enumerate}
The integer $a$ in (3) is called the \emph{Gorenstein parameter of $A$}.
\end{defn}

\begin{remark}
\label{rem:gorenstein}
There are at least two other definitions of a Gorenstein dg-algebra in the literature: let us compare Definition~\ref{def:gorenstein} with these. 
\begin{enumerate}
\item By Remark~\ref{rem:bounded} and Proposition~\ref{dbdf}, the category $\Df(A)$ coincides with the category $\mathbf{fin}(A)$ defined by Frankild-J\o rgensen in \cite[Definition 1.9]{Frankild2003}. It follows that the combination of Conditions (1) and (2) in Definition~\ref{def:gorenstein} coincides with (a bigraded version of) the definition of a Gorenstein dg-algebra in \cite[Definition 2.1]{Frankild2003}. On the other hand, Condition~(3) in Definition~\ref{def:gorenstein} is (a bigraded version of) Avramov-Foxby's definition of a Gorenstein dg-algebra in \cite[Section 3]{AF}. We will need the features of both of these definitions throughout the paper, and so we opt for our definition of Gorenstein to be given by combining Frankild-J\o rgensen's and Avramov-Foxby's definitions. 
\item When $A$ is graded commutative, a (bigraded version of a) theorem of Frankild-Iyengar-J\o rgensen \cite[Theorem I]{FIJ} implies that the combination of Conditions (1) and (2) is equivalent to Condition (3), and so Definition~\ref{def:gorenstein} is equivalent to both Frankild-J\o rgensen's and Avramov-Foxby's definitions in this case. We do not know if the combination of (1) and (2) implies (3) in general. 
\end{enumerate}
\end{remark}

\begin{remark}
\label{rem:orlovgorenstein}
In \cite{Orlov2009}, a Noetherian graded $\k$-algebra $A = \bigoplus_{i \ge 0} A_i$ such that $A_0 = \k$ is said to be Gorenstein if $\RHom_A(\k, A) \simeq \k(a)[-n]$ for some $a\in \Z$, and $A$ has finite injective dimension $n$ as a right $A$-module. These conditions imply (1) - (3) in Definition~\ref{def:gorenstein} in this case.
\end{remark}

Propositions~\ref{prop:selfdual} and~\ref{cor:selfdual} below give conditions under which one can easily compute the Gorenstein parameter of a Gorenstein dg-algebra. 

\begin{prop}
\label{prop:selfdual}
Let $A$ be as in Setup~\ref{setup}. Assume there are isomorphisms 
$$
\RHom_{A^0}(\k, A^0) \cong \k(a)[n] \quad \text{and} \quad \RHom_{A^0}(A, A^0) \cong A(s)[t]
$$
in $\D_{\on{gr}}(A)$ for some $a, n, s, t \in \Z$. We have $\RHom_A(\k, A) \cong \k(a - s)[n - t]$.
\end{prop}

\begin{proof}
There is an isomorphism in  $\D_{\on{gr}}(A)$ between $\RHom_A(\k, A)$ and
$$
 \RHom_A(\k, \RHom_{A^0}(A, A^0)(-s)[-t]) \cong \RHom_{A^0}(\k, A^0(-s)[-t]) \cong \k(a - s)[n - t].
$$
\end{proof}

\begin{prop}
\label{cor:selfdual}
Let $A$ be as in Setup~\ref{setup}. Assume $A$ is Gorenstein and graded commutative, and assume also that $A^0$ is Gorenstein.  
There is an isomorphism $\RHom_{A^0}(A, A^0) \cong A(s)[t]$ in $\D_{\on{gr}}(A)$ for some $s, t \in \Z$. In particular, the conclusion of Proposition~\ref{prop:selfdual} holds in this case. 
\end{prop}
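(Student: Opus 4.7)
The plan is to identify $\omega \coloneqq \RHom_{A^0}(A, A^0)$ with a shift-and-twist of $A$ by showing that $\omega$ is a dualizing dg-$A$-module and then invoking a uniqueness statement for such objects over the Gorenstein dg-algebra $A$. First I would verify $\omega \in \Dbgr(A)$: since $A^0$ is Gorenstein it has finite graded injective dimension, and $A$ restricted along $A^0 \into A$ lies in $\Df(A^0)$ (using connectedness of $A$ together with the finiteness hypotheses in Setup~\ref{setup}), so $\RHom_{A^0}(A, A^0)$ has bounded cohomology that is finitely generated over $H(A)$. By Proposition~\ref{dbdf} this lifts to an object of $\Dbgr(A)$, where the right $A$-action on $\omega$ comes from the left $A$-action on its first argument together with the graded commutativity of $A$.

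The central step is the Hom-tensor adjunction along $A^0 \into A$, which produces a natural equivalence
$$
\RHom_A(M, \omega) \;\cong\; \RHom_{A^0}(M, A^0)
$$
for every $M \in \Dbgr(A)$. Feeding $M = \omega$ into this and invoking biduality over the graded Gorenstein ring $A^0$ (i.e., $\RHom_{A^0}(\RHom_{A^0}(A, A^0), A^0) \cong A$) yields $\RHom_A(\omega, \omega) \cong A$ in $\Dbgr(A)$. Graded commutativity of $A$ makes the symmetric argument run for the left module structure, so $\omega$ is a dualizing dg-$A$-bimodule whose duality functor $\RHom_A(-, \omega)$ is identified with $\RHom_{A^0}(-, A^0)$.

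Since $A$ is Gorenstein in the sense of Definition~\ref{def:gorenstein}, $A$ itself is likewise a dualizing dg-$A$-bimodule. I would then argue that two dualizing dg-$A$-bimodules differ by a twist-and-shift of $A$, yielding $\omega \cong A(s)[t]$. To identify $(s, t)$ explicitly and avoid appealing to a general Picard-group statement, I would apply $\RHom_A(\k, -)$ to $\omega$ and $A$, using Definition~\ref{def:gorenstein}(3) for $A$ and the graded Gorenstein property of $A^0$ to obtain
$$
\RHom_A(\k, \omega) \cong \RHom_{A^0}(\k, A^0) \cong \k(a_0)[-n_0]
\quad \text{and} \quad
\RHom_A(\k, A) \cong \k(a)[-n],
$$
forcing $(s, t) = (a_0 - a, n - n_0)$ as the only candidate. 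Lifting a generator of the top cohomology of $\omega$ would then produce a morphism $A(s)[t] \to \omega$ whose cone $C \in \Dbgr(A)$ satisfies $\RHom_A(\k, C) = 0$.

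The main obstacle is concluding $C = 0$ from this vanishing. I expect this to follow from a Nakayama-style argument exploiting connectedness of $A$ together with the boundedness of $C$: because $A$ is connected and $C$ has finitely generated, bounded cohomology, if $C$ were nonzero then $\RHom_A(\k, C)$ would have to be nonzero, so the vanishing of the latter forces $C = 0$ and hence $\omega \cong A(s)[t]$, as required.
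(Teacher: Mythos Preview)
Your overall strategy is the paper's: show that $\omega=\RHom_{A^0}(A,A^0)$ is a dualizing dg-$A$-module via the adjunction $\RHom_A(-,\omega)\cong\RHom_{A^0}(-,A^0)$, note that $A$ itself is dualizing because $A$ is Gorenstein, and then invoke uniqueness of dualizing complexes over a commutative dg-ring. The paper's proof consists of nothing more than citations for these two steps (bigraded versions of \cite[2.6]{FIJ} and \cite[Corollary~7.16]{yekutieli2013duality}), so your sentence ``two dualizing dg-$A$-bimodules differ by a twist-and-shift of $A$'' already \emph{is} the whole argument.

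Your attempt to replace the uniqueness citation with an explicit construction, however, has a gap. The computation of $\RHom_A(\k,\omega)$ and $\RHom_A(\k,A)$ locates the \emph{socle} of $\omega$, not its minimal generators; a map $A(s')[t']\to\omega$ obtained by ``lifting a generator of the top cohomology'' sits at a bidegree governed by $\omega\otimes^{\LL}_A\k$, and nothing you have written forces this to agree with the $(s,t)$ you read off from $\RHom_A(\k,-)$. Even granting some nonzero $\alpha\colon A(s)[t]\to\omega$, you have not argued that $\RHom_A(\k,\alpha)$ is nonzero, which is precisely what you need for the cone $C$ to satisfy $\RHom_A(\k,C)=0$; the final Nakayama step is fine, but its hypothesis has not been earned. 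If you want an elementary route that avoids citing uniqueness, work on the other side: from the two dualities deduce that $\omega$ is invertible, so $\omega\otimes^{\LL}_A\k$ is one-dimensional, and then apply Nakayama to a minimal semi-free resolution of $\omega$ to obtain $\omega\cong A(s)[t]$ directly.
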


\begin{proof}
This follows from (bigraded versions of) \cite[2.6]{FIJ} and \cite[Corollary 7.16]{yekutieli2013duality}.
\end{proof}

\begin{example}
\label{ex:koszulparam}
Let $S$ be a nonnegatively $\Z$-graded, strictly commutative $\k$-algebra such that $S_0 = \k$. Assume $S$ is Gorenstein. Let $K$ be the Koszul complex on a (not necessarily regular) sequence $f_1, \dots, f_c$ of homogeneous elements in $S$. The dg-algebra $K$ satisfies the conditions in Setup~\ref{setup} (see Example~\ref{ex:koszulsetup}), and a bigraded version of \cite[Theorem 4.9]{Frankild2003} implies that $K$ satisfies (1) and (2) in Definition~\ref{def:gorenstein}. Since $K$ is graded commutative, Remark~\ref{rem:gorenstein}(2) implies that $K$ is Gorenstein. Let $n$ be the injective dimension of $S$ over itself, $a$ the Gorenstein parameter of $S$, and $d = \sum_{i = 1}^c \deg(f_i)$. Noting that $\RHom_S(K, S) \cong K(d)[-c]$, \Cref{cor:selfdual} implies $\RHom_K(\k, K) \cong \k(a - d)[c - n]$. In particular, the Gorenstein parameter of $K$ is $a - d$. 

There is a more general construction of Koszul complexes, due to Shaul \cite{Shaul_Koszul}, that yields a wider family of examples of Gorenstein dg-algebras. Let $A$ be as in Setup~\ref{setup}. Assume $A$ is graded commutative and that $a^2=0$ for all $a \in A^i$ with $i$ odd (this last condition is automatic when $\on{char}(\k) \ne 2$). Let $\ov{f_1},\dots,\ov{f_c}\in H^0(A)$ be homogeneous elements of positive internal degree, and choose homogeneous lifts $f_1, \dots, f_c \in A^0$. Equip $R \ce \kk[x_1, \dots, x_c]$ with the bigrading given by $\bideg(x_i) = (\deg(f_i), 0)$ and a trivial differential, so that $R$ is a dg-algebra, and $A$ is a right dg-$R$-module via the action $a \cdot x_i \ce af_i$. Let $T$ denote the Koszul complex on $x_1, \dots, x_c$ over $R$. The \textit{Koszul complex} $K(A;\ov{f_1},\dots, \ov{f_c})$ is defined to be the dg-algebra $A\otimes_{R} T$. By a bigraded version of \cite[Theorem 4.11(a)]{Shaul_Koszul} and \Cref{rem:gorenstein}(2), $K(A;\ov{f_1},\dots,\ov{f_c})$ is Gorenstein if $A$ is Gorenstein. 
\end{example}

\begin{example}
Let $\a: R\rightarrow S$ be a morphism of nonnegatively $\mathbb{Z}$-graded, strictly commutative, Noetherian $\kk$-algebras such that $R_0=\kk$ and $S_0$ is a field extension of $\kk$. Let $\mu^i_R$ and $\mu^i_S$ denote the $i^{\th}$ Bass numbers of $R$ and $S$. The map $\a$ is called \emph{Gorenstein} if $S$ has finite flat dimension over $R$, and there exists $d \ge 0$ such that $\mu_R^i = \mu_S^{i+d}$ for all $i \in \Z$~\cite[Section 4]{Fossum_Foxby}. Let $T$ denote the minimal free $R$-resolution of $\k$, which is a dg-algebra by work of Tate and Gulliksen~\cite{tate, gulliksen}. The \textit{dg-fiber of $\a$} may be modeled by the graded commutative dg-algebra $A \ce T \otimes_R S$. By (a bigraded version of) \cite[Theorem 4.4]{AF} and \Cref{rem:gorenstein}(2), if $\a$ is  Gorenstein, then $A$ is a Gorenstein dg-algebra. For instance, if $R$ is a complete intersection, and $S$ is a Gorenstein quotient of $R$ such that $\on{pd}_R S < \infty$, then the canonical map $R \to S$ is Gorenstein~\cite[Proposition 4.3, Corollary 7.3]{AF}. In this case, $T$ is the Shamash resolution of $\k$ over $R$, and we conclude that $T \otimes_R S$ is a Gorenstein dg-algebra. 
\end{example}

\begin{example}\label{Ex:gorenstein_tensor_product}
Let $R$ be a nonnegatively $\mathbb{Z}$-graded, strictly commutative, Gorenstein $\kk$-algebra of finite Krull dimension such that $R_0=\kk$. Suppose $S$ and $T$ are graded commutative Gorenstein dg-algebras equipped with morphisms $R \to S$, $R \to T$ of dg-algebras such that (a) $H^0(S)$ and $H^0(T)$ are essentially of finite type over $R$, and (b) $S$ and $T$ have finite flat dimension over $R$. Suppose $A$ is a connected, graded commutative dg-algebra that is quasi-isomorphic, as a dg-algebra, to $S \otimes^{\LL}_R T$; when $S$ and $T$ are concentrated in cohomological degree 0, such a dg-algebra $A$ always exists \cite[Proposition 6.1.4]{Avramov_infinite}. By (a bigraded version of) \cite[Theorem 4.4]{SHAUL_twisted_inverse_image} and \Cref{rem:gorenstein}(2), $A$ is a Gorenstein dg-algebra (this statement is a derived version of \cite[Theorem 2(1)]{Watanabe_Gorenstein}). As a concrete example, take $R$ to be the standard graded polynomial ring $\k[x_0, \dots, x_3]$, $I$ the ideal $(x_0x_3-x_1x_2,x_2^2-x_1x_3,x_1^2-x_0x_2, x_0^2x_1-x_2^2x_3, x_0^3-x_2^3) \subseteq R$, and $S = T = R/I$. Geometrically, $R/I$ is the intersection inside the quadric $V(x_0x_3 - x_1x_2) \subseteq \PP^3$ of the twisted cubic $V(x_0x_3-x_1x_2,x_1^2-x_0x_2,x_2^2-x_1x_3)$ and the subvariety $V(x_0x_3-x_1x_2, x_0^2x_1-x_2^2x_3, x_0^3-x_2^3)$. It follows from the Buchsbaum-Eisenbud structure theorem for codimension 3 Gorenstein ideals that $R/I$ is Gorenstein; indeed, $I$ is the ideal of $4^{\th}$ order Pfaffians of the alternating matrix 

\[\begin{pmatrix} 0 & x_2^2 & x_0^2 & x_1 & x_2 \\
-x_2^2 & 0 & 0 & x_0 & x_1 \\
-x_0^2 & 0 & 0 & x_2 & x_3 \\
-x_1 & -x_0 & -x_2 & 0 & 0 \\
-x_2 & -x_1 & -x_3 & 0 & 0
\end{pmatrix}.\] The minimal $R$-free resolution $F$ of $R/I$ is a connected, graded commutative dg-algebra \cite[Proposition 1.3]{BE_77}, and so the dg-algebra $A = F \otimes_R F$ is Gorenstein. We observe that $A$ is not quasi-isomorphic to an ordinary graded algebra, since $H^{-1}(A) = \Tor^R_1(R/I,R/I)= I/I^2\neq 0$.
\end{example}

\begin{example}\label{ex:dualizing_module}
    Let $R$ be a nonnegatively $\mathbb{Z}$-graded strictly commutative $\kk$-algebra with $R_0=\kk$. Suppose $R$ admits a dualizing complex $D$. Let $A:=R\ltimes D$ denote the \emph{trivial extension dg-algebra}, as defined in \cite[Definition 1.2]{Jorgensen_03}. Replacing $D$ with suitable twists and shifts, it follows from a bigraded version of \cite[Theorem 2.2]{Jorgensen_03} and \Cref{rem:gorenstein}(2),  that the dg-algebra $A$ is Gorenstein.
\end{example}

\begin{example}
\label{ex:nc}
    There is a version of the trivial extension dg-algebra in Example~\ref{ex:dualizing_module} that is built from certain (possibly noncommutative) dg-algebras. In detail: let $B$ be a dg-algebra as in Setup~\ref{setup}, and assume $\dim_\k H(B) < \infty$. For $b,d\gg 0$, the \textit{trivial extension dg-algebra} $A\ce B\oplus \Hom_{\kk}(B,\kk)(-b)[d]$ is a Gorenstein dg-algebra satisfying $\dim_{\kk}H(A)<\infty$. To see this, note that it is observed in \cite[Section 6]{Jin_2020} that there is an isomorphism $\Hom_{\kk}(A,\kk)\cong A(-a)[-n]$ in $\D(A)$ for some $a,n\in \mathbb{Z}$. By \cite[Proposition 2.6]{FIJ}, we conclude that $A$ satisfies conditions (1) and (2) of \Cref{def:gorenstein}. By adjunction, one also sees that $A$ satisfies condition (3) in \Cref{def:gorenstein}, and hence $A$ is Gorenstein. One may start with the case where $B$ is concentrated in cohomological degree 0 to inductively build a large family of Gorenstein dg-algebras using this construction. See also \cite[Theorem 4.5]{shaul2022finitisticdimensionconjecturedgrings} for a related result.
\end{example}

\section{Noncommutative algebraic geometry over a dg-algebra}
\label{sec:qgr}

One may associate to a (strictly) commutative, nonnegatively $\Z$-graded ring $A$ a projective scheme $\Proj(A)$. For instance, when $A = \kk[x_0, \dots, x_n]$, we have $\Proj(A) = \PP^n$. When $A$ is noncommutative, the Proj construction no longer makes sense. However, it follows from work of Artin-Zhang \cite{AZ} that the category of coherent sheaves on $\Proj(A)$ \emph{does} generalize to the noncommutative setting, allowing one to extend many homological aspects of projective geometry to the noncommutative world. In order to state and prove our main result, Theorem~\ref{main}, we must extend many of Artin-Zhang's results to dg-algebras: this is the goal of the present section.

 Let $A$ be as in Setup~\ref{setup}. Given $n \ge 1$, let $A^0_{\ge n}$ be the ideal $\bigoplus_{i \ge n} A^0_i \subseteq A^0$. 
 Denote by $\D^{\Tors}_{\on{gr}}(A)$ the thick subcategory of $\D_{\gr}(A)$ given by objects $M$ such that, for any class $m \in H(M)$, there exists an integer $n$ such that $m \cdot A^0_{\geq n}=0$. Let $\D^{\tors}_{\on{gr}}(A) \ce \D^{\Tors}_{\on{gr}}(A) \cap \Db(A)$. We define $\DQgr(A) \ce \D_{\on{gr}}(A) / \D^{\Tors}_{\on{gr}}(A)$, and we let $\on{D}_{\qgr}(A)$ denote the essential image of the fully faithful embedding $\Dbgr(A) / \D^{\tors}_{\on{gr}}(A)\hookrightarrow \DQgr(A)$. 

\begin{lemma}
\label{thick}
The category $\D^{\tors}_{\on{gr}}(A)$ is the thick subcategory of $\Dbgr(A)$ generated by $\k(i)$ for $i \in \Z$. 
\end{lemma}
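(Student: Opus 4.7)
The forward inclusion $\mathcal{T} := \langle \k(i) : i \in \Z \rangle_{\mathrm{thick}} \subseteq \D^{\tors}_{\gr}(A)$ is immediate, since each $\k(i)$ is visibly torsion and $\D^{\tors}_{\gr}(A)$ is thick. So the substantive direction is $\D^{\tors}_{\gr}(A) \subseteq \mathcal{T}$. Fix $M \in \D^{\tors}_{\gr}(A)$; the plan is to proceed in three stages.

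First, I will show that $H(M)$ is finite-dimensional over $\k$. Since $M \in \Dbgr(A)$, \Cref{dbdf} gives that $H(M)$ is finitely generated over $H(A)$, hence over $H^0(A)$ by \Cref{setup}. The $A^0$-action on $H(M)$ factors through the surjection $A^0 \twoheadrightarrow H^0(A)$, under which $A^0_{\geq n}$ maps onto $H^0(A)_{\geq n}$, so the torsion hypothesis rephrases as: every $m \in H(M)$ is annihilated by some $H^0(A)_{\geq n}$. A routine internal-degree calculation (noting that $h \cdot H^0(A)_{\geq n'} \subseteq H^0(A)_{\geq n' + \deg h}$ for homogeneous $h \in H^0(A)$) shows that the set of such torsion elements forms a right $H^0(A)$-submodule of $H(M)$; by hypothesis it is all of $H(M)$. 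Taking the maximum of the exponents for a finite generating set then produces a uniform $n$ with $H(M) \cdot H^0(A)_{\geq n} = 0$, so $H(M)$ is a finitely generated module over the finite-dimensional $\k$-algebra $H^0(A)/H^0(A)_{\geq n}$, and is therefore finite-dimensional over $\k$.

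Second, I will reduce to the case where $H(M)$ is concentrated in a single cohomological degree. If at least two of the $H^j(M)$ are nonzero, choose $j$ so that both $\sigma^{\leq j} M$ and $M / \sigma^{\leq j} M$ are nonzero. The short exact sequence of dg-$A$-modules
\[
0 \to \sigma^{\leq j} M \to M \to M/\sigma^{\leq j} M \to 0
\]
coming from \Cref{remark:smart} yields a triangle in $\Dbgr(A)$ whose outer terms both lie in $\D^{\tors}_{\gr}(A)$ and have strictly fewer nonzero cohomology groups than $M$. Induction on this count reduces us to the single-degree case. For such $M$, applying $\sigma^{\geq j}$ and then $\sigma^{\leq j}$ yields an isomorphism $M \cong H^j(M)[-j]$ in $\Dbgr(A)$, where $H^j(M)$ is viewed as a dg-$A$-module concentrated in cohomological degree $j$; equivalently (since $A^{>0}=0$ and the action of $A^{<0}$ lands in zero graded pieces), as a finite-dimensional graded right $H^0(A)$-module.

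Finally, because $H^0(A)$ is a connected, Noetherian graded $\k$-algebra with residue field $\k$, the module $H^j(M)$ admits a finite composition series $0 = N_0 \subset N_1 \subset \cdots \subset N_r = H^j(M)$ whose successive quotients are of the form $\k(i_l)$. Reading the shifted filtration $N_0[-j] \subset \cdots \subset N_r[-j]$ as a tower of triangles $N_{l-1}[-j] \to N_l[-j] \to \k(i_l)[-j]$ in $\Dbgr(A)$ exhibits $H^j(M)[-j] \in \mathcal{T}$, completing the argument. The main technical hurdle is the first step---the passage from pointwise torsion to finite-dimensionality of $H(M)$; everything after that is a formal combination of truncations and composition series, made available by the hypotheses in \Cref{setup}.
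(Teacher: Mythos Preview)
Your proof is correct and follows essentially the same route as the paper's: both reduce to the case of a module concentrated in a single cohomological degree via the truncations of \Cref{remark:smart}, observe that such a module is a finite-dimensional graded $A^0$-module (using the torsion hypothesis together with finite generation), and then filter it by copies of $\k(i)$. The differences are cosmetic---you front-load the finite-dimensionality argument for $H(M)$ and spell it out more carefully than the paper does, you induct on the number of nonzero cohomology groups rather than on amplitude, and you use a composition series over $H^0(A)$ in place of the radical filtration $M \supset M\cdot A^0_{\ge 1} \supset \cdots$---but the underlying argument is the same.
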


\begin{proof}
Let $\mathcal{S}$ denote the thick subcategory of $\Dbgr(A)$ generated by $\k(i)$ for $i \in \Z$. We clearly have $\mathcal{S} \subseteq \D^{\tors}_{\on{gr}}(A)$. Let $M \in \D^{\tors}_{\on{gr}}(A)$. If $M$ is exact, then it is in $\D^{\tors}_{\on{gr}}(A)$, so assume otherwise. Suppose $M$ is concentrated in cohomological degree 0. In particular, $M$ is a finitely generated $A^0$-module and therefore a finite dimensional $\k$-vector space, say of dimension $d$. If $d = 1$, then $M \cong \kk(i)$ for some $i$ and is thus an object in $\mathcal{S}$. If $d > 1$,  then either $M$ is a sum of twists of $\k$, in which case $M \in \mathcal{S}$, or the exact sequence
$
0 \to M \cdot A^0_{\ge 1} \into M \onto M / M \cdot A^0_{\ge 1} \to 0
$
implies that $M \in \mathcal{S}$ by induction on~$d$. In general, let $r$ denote the \emph{amplitude} of $M$, i.e.
$
r = \max\{i - j \text{ : } H^i(M) \ne 0 \text{ and } H^j(M) \ne 0\};
$
we argue by induction on $r$. Suppose $r = 0$; without loss, assume $H^0(M) \ne 0$. By Remark~\ref{remark:smart}, we have an isomorphism $M \cong \sigma^{\ge 0}(\sigma^{\le 0} M) = H^0(M)$ in $\D(A)$, and so $M \in \mathcal{S}$ by the above arguments. Suppose $r > 0$, and let $m = \min\{ i \text{ : } H^i(M) \ne 0\}$. By Remark~\ref{remark:smart} and induction, the exact sequence
$
0 \to \sigma^{\le m} M \into M \to M / \sigma^{\le m} M \to 0
$
implies that $M \in \mathcal{S}$.
\end{proof}

 Given $M \in \D(A)$, we let $\widetilde{M}$ denote the corresponding object in $\DQgr(A)$. 
We have canonical triangulated functors
$
\Pi \co \D_{\on{gr}}(A) \to \DQgr(A)$ and  $\pi \co \Dbgr(A) \to \Dqgr(A)
$
given by $M \mapsto \widetilde{M}$. When $A = A^0$, it follows from \cite[Lemma 4.4.1]{krause} that the categories $\DQgr(A)$ and $\Dqgr(A)$ coincide with the derived categories of the abelian categories $\Qgr(A)$ and $\qgr(A)$ defined in \cite{AZ}. In particular, if $A$ is concentrated in cohomological degree 0, and $A$ is (strictly) commutative and generated in internal degree 1, then $\DQgr(A)$ (resp. $\Dqgr(A)$) is equivalent to the derived category of quasi-coherent (resp. coherent) sheaves on $\Proj(A)$.

\begin{remark}
Versions of the derived ``qgr"-construction for dga's have appeared before, for instance in Greenlees-Stevenson's definition of the cosingularity category \cite[Definition 9.7, Remark 9.9]{GS} and in Lu-Palmieri-Wu-Zhang's $A_{\infty}$-version of the qgr   construction \cite[Section 10]{Koszul_ainfinity}.
\end{remark}

\subsection{Derived global sections}

Let $A$ be as in Setup~\ref{setup}. For $n \in \Z$, define  $\D_{\on{gr}}(A)_{\ge n} \subseteq \D_{\on{gr}}(A)$ to be the subcategory given by objects $M$ such that $M_i = 0$ for $i < n$. We define $\D^{\Tors}_{\on{gr}}(A)_{\ge n} \subseteq \D^{\Tors}_{\on{gr}}(A)$, $\Dbgr(A)_{\ge n} \subseteq \Dbgr(A)$, and $\D^{\tors}_{\on{gr}}(A)_{\ge n} \subseteq \D^{\tors}_{\on{gr}}(A)$ similarly.
The canonical functors
$$
\D_{\on{gr}}(A)_{\ge n} / \D^{\Tors}_{\on{gr}}(A)_{\ge n} \to \DQgr(A), \quad \Dbgr(A)_{\ge n} / \D^{\tors}_{\on{gr}}(A)_{\ge n} \to \DQgr(A)
$$
are triangulated equivalences for all $n \in \Z$. The truncation functor $\tau_{\ge n} \co \D_{\on{gr}}(A) \to \D_{\on{gr}}(A)_{\ge n}$ is the right adjoint to the inclusion $\D_{\on{gr}}(A)_{\ge n} \into \D_{\on{gr}}(A)$, and its restriction $\Dbgr(A) \to \Dbgr(A)_{\ge n}$, which we also denote by $\tau_{\ge n}$, is the right adjoint of the inclusion $\Dbgr(A)_{\ge n} \into \Dbgr(A)$.

We now define a ``total derived global sections functor" $\dGamma_* \co \DQgr(A) \to \D_{\on{gr}}(A)$.

\begin{prop}
\label{prop:rightadjoint}
Let $A$ be as in Setup~\ref{setup}. The functor $\Pi \co \D_{\on{gr}}(A) \to \DQgr(A)$ admits a fully faithful right adjoint $\RR\Gamma_*$. Moreover, for each $n \in \Z$, the canonical functor $\Pi_n \co \D_{\on{gr}}(A)_{\ge n} \to \DQgr(A)$ admits a fully faithful right adjoint given by $\RR\Gamma_{\ge n} \ce \tau_{\ge n} \RR\Gamma_*$. 
\end{prop}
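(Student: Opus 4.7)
The plan is to deduce both statements from the standard theory of Bousfield localization in a compactly generated triangulated category. First, I would observe that $\D_{\on{gr}}(A)$ is compactly generated by the perfect objects $\{A(j)[i]\}_{(i,j)\in \Z^2}$, and that $\D^{\Tors}_{\on{gr}}(A)$ is a localizing subcategory (closed under arbitrary coproducts, shifts, and triangles). Building on Lemma~\ref{thick}, I would show that $\D^{\Tors}_{\on{gr}}(A)$ is generated, as a localizing subcategory, by the set $\{\kk(j)\}_{j \in \Z}$; the idea is to express any $M \in \D^{\Tors}_{\on{gr}}(A)$ as a homotopy colimit of dg-submodules whose cohomology is finitely generated and torsion, each of which lies in the thick subcategory generated by the $\kk(j)$ by Lemma~\ref{thick}.

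With these inputs, Neeman's localization theorem provides a right adjoint $\mathbf{R}\Gamma_{\mathfrak{m}}$ to the inclusion $\D^{\Tors}_{\on{gr}}(A) \hookrightarrow \D_{\on{gr}}(A)$; equivalently, for every $M$ there is a functorial triangle $\mathbf{R}\Gamma_{\mathfrak{m}} M \to M \to L M$ with $\mathbf{R}\Gamma_{\mathfrak{m}} M \in \D^{\Tors}_{\on{gr}}(A)$ and $L M \in \D^{\Tors}_{\on{gr}}(A)^{\perp}$. General nonsense for Verdier quotients then implies that $\Pi$ restricts to an equivalence $\D^{\Tors}_{\on{gr}}(A)^{\perp} \iso \DQgr(A)$, whose inverse composed with the inclusion into $\D_{\on{gr}}(A)$ yields the desired right adjoint $\RR\Gamma_*$. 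Its full faithfulness is automatic, since the right adjoint to a Verdier quotient, when it exists, is always fully faithful (the counit being an isomorphism is equivalent to the quotient functor's being a localization).

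For the second claim, $\Pi_n$ factors as the composition $\D_{\on{gr}}(A)_{\ge n} \hookrightarrow \D_{\on{gr}}(A) \xrightarrow{\Pi} \DQgr(A)$ of two functors with known right adjoints $\tau_{\ge n}$ and $\RR\Gamma_*$, so $\tau_{\ge n} \RR\Gamma_*$ is right adjoint to $\Pi_n$ by composition of adjunctions. For the full faithfulness of $\RR\Gamma_{\ge n}$, it suffices to check that the counit $\Pi(\tau_{\ge n} \RR\Gamma_* \widetilde{M}) \to \widetilde{M}$ is an isomorphism in $\DQgr(A)$. The cone of the natural map $\tau_{\ge n} \RR\Gamma_* \widetilde{M} \to \RR\Gamma_* \widetilde{M}$ is concentrated in internal degrees $< n$, so any cohomology class of internal degree $i < n$ is annihilated by $A^0_{\ge n - i}$; this cone therefore lies in $\D^{\Tors}_{\on{gr}}(A)$ and vanishes under $\Pi$, giving the required isomorphism by the already-established full faithfulness of $\RR\Gamma_*$.

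The main obstacle is verifying that $\D^{\Tors}_{\on{gr}}(A)$ is generated as a localizing subcategory by the $\kk(j)$: Lemma~\ref{thick} only identifies the thick closure inside $\Dbgr(A)$, and extending to arbitrary (possibly unbounded) torsion objects requires a careful presentation as a homotopy colimit of finitely generated torsion pieces, which in turn relies on the filtration of a dg-module by its internal-degree-truncated submodules together with the Noetherian hypothesis on $A^\nat$ from Setup~\ref{setup}. Once this is in hand, everything else is a formal application of Bousfield localization and composition of adjoints.
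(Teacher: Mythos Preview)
Your proposal is correct and follows essentially the same route as the paper: both arguments invoke Neeman's Bousfield localization machinery for compactly generated triangulated categories to produce the right adjoint $\RR\Gamma_*$, then obtain $\RR\Gamma_{\ge n}$ by composing with the adjoint $\tau_{\ge n}$. The minor differences are in which technical hypotheses receive attention: you verify that $\D^{\Tors}_{\on{gr}}(A)$ is set-generated by the $\kk(j)$, whereas the paper instead takes care to check local smallness of $\DQgr(A)$ via Drinfeld's dg-quotient construction; and for the full faithfulness of $\RR\Gamma_{\ge n}$ you give a direct argument (the cone of $\tau_{\ge n}\RR\Gamma_*\widetilde{M} \to \RR\Gamma_*\widetilde{M}$ is torsion since it is supported in finitely many internal degrees below $n$), while the paper simply cites Neeman's Theorem 9.1.16 a second time. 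Your direct argument is a pleasant and self-contained alternative, and the ``main obstacle'' you flag is less of an obstacle than you suggest---the paper sidesteps it entirely by appealing to a version of the localization theorem that only requires $\D^{\Tors}_{\on{gr}}(A)$ to be localizing.
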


\begin{proof}
To show that $\Pi$ admits a right adjoint, we wish to apply \cite[Example 8.4.5]{neeman}. To do so, we will check that the categories $\D_{\on{gr}}(A)$ and $\DQgr(A)$ are locally small (meaning that morphisms between any pair of objects in these categories form sets), $\D_{\on{gr}}(A)$ is $\aleph_1$-perfectly generated (in the sense of \cite[Definition 8.1.2]{neeman}) and closed under arbitrary coproducts, and $\D^{\Tors}_{\gr}(A)$ is a localizing subcategory of $\D_{\on{gr}}(A)$ (the assumptions that $\D_{\on{gr}}(A)$ is $\aleph_1$-perfectly generated, admits arbitrary coproducts, and is locally small imply that $\D_{\on{gr}}(A)$ satisfy the representability theorem, in the sense of \cite[Definition 1.20]{neeman}: see \cite[Theorem 8.3.3]{neeman}). Local smallness of $\D_{\on{gr}}(A)$ is well-known and follows, for instance, from (a bigraded version of) \cite[\href{https://stacks.math.columbia.edu/tag/09KY}{Tag 09KY}]{stacks-project}. The set $S = \{A(i)[j] \text{ : } i, j \in \Z\}$ is $\aleph_1$-perfect in the sense of \cite[Definition 3.3.1]{neeman}, and it generates $\D_{\on{gr}}(A)$ in the sense of \cite[Definition 8.1.1]{neeman}; it follows that $\D_{\on{gr}}(A)$ is $\aleph_1$-perfectly generated. It is clear that $\D_{\on{gr}}(A)$ is closed under arbitrary coproducts, and also that $\D^{\Tors}_{\gr}(A)$ is a localizing subcategory. To see that $\DQgr(A)$ is locally small, we observe the following. The triangulated category $\D_{\on{gr}}(A)$ arises as the homotopy category of a pretriangulated dg-category $\mathcal{C}$, and $\Tors(A)$ is the homotopy category of a pretriangulated dg-subcategory of $\mathcal{C}$. The local smallness of $\DQgr(A)$ thus follows from the local smallness of $\D_{\on{gr}}(A)$ along with \cite[Proposition 4.7(iii) and (iv)]{drinfeld}. 

With the above observations in hand, we may apply \cite[Example 8.4.5]{neeman} to obtain a right adjoint $\dGamma_* \co \DQgr(A) \to \D_{\on{gr}}(A)$ to $\Pi$. It follows from \cite[Theorem 9.1.16]{neeman} that $\RR\Gamma_*$ is fully faithful. To see this, observe that $\dGamma_*$ is, by definition, a Bousfield localization functor for the inclusion $\D^{\Tors}_{\on{gr}}(A) \into \D_{\on{gr}}(A)$. Letting $^{\perp}\D^{\Tors}_{\on{gr}}(A)$ be as defined in \cite[Definition 9.1.11]{neeman}, the composition of the equivalence $\Dqgr(A) \xra{\simeq} ^{\perp}\D^{\Tors}_{\on{gr}}(A)$ arising from \cite[Theorem 9.1.16]{neeman} with the inclusion $\D^{\Tors}_{\on{gr}}(A) \into \D_{\on{gr}}(A)$ is easily seen to be the right adjoint of $\Pi$ and is clearly fully faithful; by the uniqueness of right adjoints, $\dGamma_*$ is fully faithful as well.

Finally, since $\tau_{\ge n}$ is the right adjoint of the inclusion $\D_{\on{gr}}(A)_{\ge n} \into \D_{\on{gr}}(A)$,  the functor $\dGamma_{\ge n}$ is the right adjoint of the canonical functor $\D_{\on{gr}}(A)_{\ge n} \to \DQgr(A)$. Applying \cite[Theorem 9.1.6]{neeman} and the above argument once again, we conclude that each functor $\dGamma_{\ge n}$ is fully faithful.
\end{proof}

The functor $\dGamma \co \DQgr(A) \to \D_{\on{gr}}(A)$ typically does not  map $\Dqgr(A)$ to $\Dbgr(A)$. For instance, if $A = \k[x_0, x_1]$ with $\bideg(x_0) = \bideg(x_1) = (1,0)$, then $\DQgr(A)$ (resp. $\Dqgr(A)$) is the derived category of quasicoherent (resp. bounded derived category of coherent) sheaves on $\PP^1$, and $\dGamma_*(\OO_{\PP^1})$ is not coherent. However, we now determine conditions under which the functors $\dGamma_{\ge n} \co \DQgr(A) \to \D_{\on{gr}}(A)_{\ge n}$ do map $\Dqgr(A)$ to $\Dbgr(A)_{\ge n}$; this is the content of Proposition~\ref{prop:faithful}. This is well-understood when $A$ is concentrated in cohomological degree 0, due to work of Artin-Zhang~\cite{AZ}, and we will build on their results to prove Proposition~\ref{prop:faithful}. We recall the following definition due to Artin-Zhang:

\begin{defn}[\cite{AZ} Definition 3.7]
\label{def:chi}
Suppose $A$ is concentrated in cohomological degree zero, i.e. $A$ is an ordinary connected $\Z$-graded $\k$-algebra. We say $A$ satisfies \emph{condition $\chi$} if, for every finitely generated graded $A$-module $M$, we have $\dim_\k \underline{\Ext}^i_A(\kk,M) < \infty$ for all $i \in \Z$.\footnote{Definition~\ref{def:chi} is not identical to \cite[Definition 3.7]{AZ}, but it is equivalent: see \cite[Definition 3.2 and Proposition 3.8(1)]{AZ}.} 
\end{defn}

The following lemma follows from results of Artin-Zhang, Yekutieli, and Yekutieli-Zhang:

\begin{lemma}[\cite{AZ, YEKUTIELI1_dualizing, Yekutieli-Zhang}]
\label{lemma:AZ}
Let $A$ be as in Setup~\ref{setup}, and suppose $A$ is concentrated in cohomological degree zero. Let $M$ be a finitely generated graded $A$-module and $n \in \Z$. 
\begin{enumerate}
\item If $A$ satisfies condition $\chi$, and $\RR^j\Gamma_{\ge n}(\w{M}) = 0$ for $j \gg 0$, then $\RR\Gamma_{\ge n}(\w{M}) \in \Dbgr(A)$.
\item If $A$ is either Gorenstein or (strictly) commutative,
then $A$ satisfies condition $\chi$, and $\RR^j\Gamma_*(\w{M}) = \RR^j\Gamma_{\ge n}(\w{M}) = 0$ for $j \gg 0$. In particular, $\RR\Gamma_{\ge n}(\w{M}) \in \Dbgr(A)$ in both cases.
\end{enumerate}
\end{lemma}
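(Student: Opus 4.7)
The plan is to reduce to classical Artin-Zhang noncommutative projective geometry and cite results from \cite{AZ}, \cite{YEKUTIELI1_dualizing}, and \cite{Yekutieli-Zhang}. Since $A$ is concentrated in cohomological degree zero, Example~\ref{ex:trivial} identifies $\w{M}$ with an object of the derived category associated to the noncommutative projective scheme determined by the underlying $\Z$-graded $\k$-algebra $A^{\nat}$, and the functors $\RR\Gamma_*$ and $\RR\Gamma_{\ge n}$ coincide with their classical analogues. Since the cohomological grading on $A$ is trivial, bigraded versions of the cited results follow by treating the internal grading as the $\Z$-grading in the references and the cohomological grading as an inert parameter.

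For (1), the key input is the canonical exact triangle
$$\RR\Gamma_{\mathfrak{m}}(M) \to M \to \RR\Gamma_*(\w{M}) \xra{+1}$$
in $\D_{\on{gr}}(A)$, where $\mathfrak{m} \ce A_{\ge 1}$ and $\RR\Gamma_{\mathfrak{m}}$ denotes derived local cohomology; see \cite[Section 7]{AZ}. Applying the truncation $\tau_{\ge n}$ yields the triangle
$$\tau_{\ge n}\RR\Gamma_{\mathfrak{m}}(M) \to M_{\ge n} \to \RR\Gamma_{\ge n}(\w{M}) \xra{+1}.$$
Since $M_{\ge n}$ lies in $\Dbgr(A)$, it suffices to show $\tau_{\ge n}\RR\Gamma_{\mathfrak{m}}(M) \in \Dbgr(A)$. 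Under condition $\chi$, a bigraded version of \cite[Proposition 3.11]{AZ} yields that each $\RR^i\Gamma_{\mathfrak{m}}(M)$ has finite-dimensional graded pieces and is supported in internal degrees bounded above, so that $\tau_{\ge n}\RR^i\Gamma_{\mathfrak{m}}(M)$ is finite-dimensional over $\k$ and hence finitely generated over $A$. Cohomological boundedness of $\tau_{\ge n}\RR\Gamma_{\mathfrak{m}}(M)$ then follows from the hypothesis $\RR^j\Gamma_{\ge n}(\w{M}) = 0$ for $j \gg 0$, via the identification $\tau_{\ge n}\RR^i\Gamma_{\mathfrak{m}}(M) \cong \RR^{i-1}\Gamma_{\ge n}(\w{M})$ for $i \ge 2$ coming from the above triangle.

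For (2), the two cases are handled by distinct classical tools. In the strictly commutative case, condition $\chi$ is immediate: for a commutative Noetherian graded $\k$-algebra with $A_0 = \k$, $\underline{\Ext}^i_A(\k, M)$ is finitely generated over $A$ and annihilated by $\mathfrak{m}$, hence finite-dimensional over $\k$. The vanishing $\RR^j\Gamma_*(\w{M}) = 0$ for $j \gg 0$ then follows from Grothendieck's vanishing theorem $\RR^j\Gamma_{\mathfrak{m}}(M) = 0$ for $j > \dim A$ via the triangle above. In the noncommutative Gorenstein case, the theory of balanced dualizing complexes applies: a Noetherian connected graded Gorenstein $\k$-algebra (in the sense of Remark~\ref{rem:orlovgorenstein}) admits a balanced dualizing complex by \cite[Theorem 6.3]{YEKUTIELI1_dualizing}, and by \cite[Theorem 4.5]{Yekutieli-Zhang}, the existence of such a complex implies both condition $\chi$ and the finiteness of local cohomological dimension. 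In either subcase, (1) then applies.

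The main technical subtlety is verifying that the various results from the cited papers, stated in the singly $\Z$-graded setting, extend to the bigraded setting, but since the cohomological grading on $A$ is trivial in this situation, this reduces to bookkeeping.
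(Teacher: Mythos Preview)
Your approach is essentially the same as the paper's: both reduce to the classical Artin--Zhang setting and cite the relevant results from \cite{AZ}, \cite{YEKUTIELI1_dualizing}, and \cite{Yekutieli-Zhang}. The paper is terser---it simply cites \cite[Theorem~7.4]{AZ} for (1), whereas you unpack the local cohomology triangle that underlies that theorem; and for the commutative case of (2) the paper argues via projectivity of $\Proj(A)$ over $\k$ rather than Grothendieck vanishing for $\RR\Gamma_{\mathfrak m}$, but these are equivalent viewpoints.

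A few citation issues: your reference to \cite[Proposition~3.11]{AZ} in part (1) is off---that proposition gives special cases where $\chi$ holds (e.g., the commutative case), not the boundedness of $\RR^i\Gamma_{\mathfrak m}(M)$ under $\chi$; you want something closer to \cite[Proposition~3.5 or Corollary~3.6]{AZ}. Likewise, the paper cites \cite[Section~4]{YEKUTIELI1_dualizing} and \cite[Theorem~4.2(3)]{Yekutieli-Zhang} rather than your Theorem~6.3 and Theorem~4.5; you may want to double-check those numbers against the sources.
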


\begin{proof}
Part (1) follows from \cite[Theorem 7.4]{AZ}.
As for (2): if $A$ is commutative, then condition $\chi$ holds by \cite[Proposition 3.11(3)]{AZ}, and $\RR^j\Gamma_*(\w{M}) = H^j(\Proj(A), \w{M}) =~0$ for $j \gg 0$ since $\Proj(A)$ is projective over~$\k$. It follows that, given $n \in \Z$, we have $\RR^j\Gamma_{\ge n}(\w{M}) = 0$ for all $j \gg 0$ as well. If $A$ is Gorenstein, then $A$ has a balanced dualizing complex (in the sense of \cite[Definition 4.1]{YEKUTIELI1_dualizing}) by \cite[Section 4]{YEKUTIELI1_dualizing}, and so the result follows from \cite[Theorem 4.2(3)]{Yekutieli-Zhang}. 
\end{proof}

\begin{prop}
\label{prop:faithful}
Let $A$ be as in Setup~\ref{setup}. If $A^0$ is Gorenstein or (strictly) commutative, then 
the canonical functor 
$\pi_n \co \Dbgr(A)_{\ge n} \to \Dqgr(A)$ admits a fully faithful right adjoint given by $\dGamma_{\ge n}$ for all $n \in \Z$. 
\end{prop}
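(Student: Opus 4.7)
My plan is to reduce to Lemma~\ref{lemma:AZ}(2) applied to the graded ring $A^0$, via the restriction-of-scalars functor $\rho \co \D_{\gr}(A) \to \D_{\gr}(A^0)$ along $A^0 \into A$. Since the torsion condition $m \cdot A^0_{\ge N} = 0$ depends only on the $A^0$-action and $\rho$ preserves cohomology, $\rho$ will send $\D^{\Tors}_{\gr}(A)$ to $\D^{\Tors}_{\gr}(A^0)$ and descend to a functor $\rho_{\qgr} \co \DQgr(A) \to \DQgr(A^0)$. Moreover, for $M \in \Dbgr(A)$, $H(M)$ is finitely generated over $H(A)$, hence over $H^0(A)$ by Setup~\ref{setup}, and hence over $A^0$ via the surjection $A^0 \onto H^0(A)$; so $\rho M \in \Dbgr(A^0)$.

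The crux of the proof will be the natural isomorphism
\[
\rho \circ \dGamma_*^A \cong \dGamma_*^{A^0} \circ \rho_{\qgr}.
\]
I would establish this using the formula $\dGamma^{\Tors}_A(M) \cong \on{hocolim}_k \RHom_A(A/A^0_{\ge k}A,\, M)$, the identification $A/A^0_{\ge k}A \cong (A^0/A^0_{\ge k}) \otimes_{A^0} A$, and the derived tensor-hom adjunction between restriction and extension of scalars:
\[
\RHom_A\bigl((A^0/A^0_{\ge k}) \Lotimes_{A^0} A,\, M\bigr) \cong \RHom_{A^0}\bigl(A^0/A^0_{\ge k},\, \rho M\bigr).
\]
Taking homotopy colimits then yields $\rho(\dGamma^{\Tors}_A M) \cong \dGamma^{\Tors}_{A^0}(\rho M)$, and the compatibility for $\dGamma_*$ will follow by comparing the Bousfield localization triangles $\dGamma^{\Tors} \to \id \to \dGamma_* \Pi$ on both sides. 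Since $\tau_{\ge n}$ commutes with $\rho$, this will give $\rho(\dGamma^A_{\ge n}(\w M)) \cong \dGamma^{A^0}_{\ge n}(\w{\rho M})$.

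Granting the compatibility, the rest is quick: Lemma~\ref{lemma:AZ}(2) applied to $A^0$ (Gorenstein or strictly commutative by hypothesis), extended from finitely generated graded modules to bounded complexes by induction on cohomological amplitude, gives $\dGamma^{A^0}_{\ge n}(\w{\rho M}) \in \Dbgr(A^0)_{\ge n}$. Transferring via the compatibility, $\dGamma^A_{\ge n}(\w M)$ has bounded cohomology finitely generated over $A^0$, hence over $H(A)$ through $A^0 \onto H^0(A) \into H(A)$, placing $\dGamma^A_{\ge n}(\w M)$ in $\Dbgr(A)_{\ge n}$. The fully faithfulness of the restricted right adjoint is then inherited from Proposition~\ref{prop:rightadjoint}. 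The main obstacle is the compatibility isomorphism: in the graded commutative case it admits a clean argument via the \v Cech complex on a homogeneous generating set of $A^0_{\ge 1}$ (whose elements are central in $A$), but in general one must argue more carefully, since the extension-of-scalars functor $-\Lotimes_{A^0} A$ need not preserve the torsion subcategory and so one cannot simply transport the $A^0$-side Bousfield localization along $\rho$ by formal adjoint nonsense.
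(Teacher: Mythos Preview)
Your overall strategy matches the paper's exactly: restrict along $A^0 \into A$, establish the compatibility $\rho \circ \dGamma^A_{\ge n} \cong \dGamma^{A^0}_{\ge n} \circ \rho_{\qgr}$, and then invoke Lemma~\ref{lemma:AZ}(2). The divergence is in how you propose to obtain the compatibility, and here you misidentify the obstacle. The paper proves it precisely by the ``formal adjoint nonsense'' you discard in your last sentence: one checks that extension of scalars $F = -\Lotimes_{A^0} A$ \emph{does} preserve the torsion subcategory---indeed $F(\k(j)) = \k(j) \Lotimes_{A^0} A$ has cohomology annihilated by $A^0_{\ge 1}$, and then Lemma~\ref{thick} together with closure under filtered colimits handles all of $\D^{\Tors}_{\gr}(A^0)$. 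With both $F$ and $G = \rho$ preserving torsion, \cite[Lemma 1.1]{Orlov2009} gives an induced adjunction $\ov F \dashv \ov G$ on $\DQgr$, and passing to right adjoints in the commuting square $\Pi_A \circ F \cong \ov F \circ \Pi_{A^0}$ immediately yields $G \circ \dGamma^A_* \cong \dGamma^{A^0}_* \circ \ov G$. Concatenating with $\tau_{\ge n}$ finishes.

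Your explicit route via $\dGamma^{\Tors}_A(M) \cong \on{hocolim}_k \RHom_A(A/A^0_{\ge k}A,\, M)$ could be made to work, but it is not free: you would have to justify that this colimit really computes the Bousfield colocalization (equivalently, that these quotients generate $\D^{\Tors}_{\gr}(A)$ as a localizing subcategory), and reconcile the underived $A/A^0_{\ge k}A = (A^0/A^0_{\ge k}) \otimes_{A^0} A$ with the derived tensor needed for the adjunction isomorphism. The paper sidesteps all of this. In fact Proposition~\ref{prop:explicit}, proved immediately after, records exactly the colimit formula you want---but as a \emph{consequence} of the formal compatibility, not as its proof.
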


\begin{remark}
\label{rem:bidual1}
The assumption in Proposition~\ref{prop:faithful} that $A^0$ is Gorenstein or strictly commutative is needed to apply Lemma~\ref{lemma:AZ}. We note that these assumptions can be relaxed: one only needs $A^0$ to admit a balanced dualizing complex (in the sense of \cite[Definition 4.1]{YEKUTIELI1_dualizing}) for the results of Lemma~\ref{lemma:AZ} to hold. For instance, condition $\chi$ follows in this case from \cite[Theorem 4.2(3)]{Yekutieli-Zhang}.
\end{remark}

\begin{proof}
First, we show that the derived extension of scalars functor $F \co \D_{\on{gr}}(A^0) \to \D_{\on{gr}}(A)$ given by $M \mapsto M \otimes^\LL_{A^0} A$ sends $\D^{\Tors}_{\on{gr}}(A^0)$ to $\D^{\Tors}_{\on{gr}}(A)$. Since every object in $\D^{\Tors}_{\on{gr}}(A^0)$ is a filtered colimit of objects in $\D^{\tors}_{\on{gr}}(A^0)$, $F$ commutes with colimits, and $\D^{\Tors}_{\on{gr}}(A)$ is closed under filtered colimits; it suffices to show that $F$ maps $\D^{\tors}_{\on{gr}}(A^0)$ to $\D^{\Tors}_{\on{gr}}(A)$. Certainly $F(\kk(j)) \in \D^{\Tors}_{\on{gr}}(A)$ for all $j \in \Z$; by Lemma~\ref{thick}, it follows that $F(\D^{\tors}_{\on{gr}}(A^0)) \subseteq \D^{\Tors}_{\on{gr}}(A)$. 

It is clear that the restriction of scalars functor $G \co \D_{\on{gr}}(A) \to \D_{\on{gr}}(A^0)$ sends $\D^{\Tors}_{\on{gr}}(A)$ to $\D^{\Tors}_{\on{gr}}(A^0)$. By \cite[Lemma 1.1]{Orlov2009}, we conclude that the adjunction $F \co \D_{\on{gr}}(A^0) \rightleftarrows \D_{\on{gr}}(A) \co G$ induces an adjunction $\ov{F} \co \DQgr(A^0) \rightleftarrows \DQgr(A) \co \ov{G}$, and moreover we have a commutative square
\begin{equation}
\label{square}
\xymatrix{
\D_{\on{gr}}(A^0) \ar[d]^-{F} \ar[r]^-{\pi} & \DQgr(A^0) \ar[d]^-{\ov{F}} \\
\D_{\on{gr}}(A) \ar[r]^-{\pi} & \DQgr(A). \\
}
\end{equation}
Replacing the left adjoints in \eqref{square} with their right adjoints, we obtain the commutative square

\begin{equation}
\label{squarekey}
\xymatrix{
\D_{\on{gr}}(A^0)   & \ar[l]_-{\dGamma_*} \DQgr(A^0)  \\
\D_{\on{gr}}(A) \ar[u]_-{G}  & \ar[l]_-{\dGamma_*} \DQgr(A). \ar[u]_-{\ov{G}} \\
}
\end{equation}
Let $n \in \Z$. Concatenating \eqref{squarekey} with the commutative diagram
$$
\xymatrix{
\D_{\on{gr}}(A^0)_{\ge n}  & \ar[l]_-{\tau_{\ge n}} \D_{\on{gr}}(A^0)  \\
\D_{\on{gr}}(A)_{\ge n} \ar[u]_-{G}  & \ar[l]_-{\tau_{\ge n}} \D_{\on{gr}}(A), \ar[u]_-{G} \\
}
$$
we arrive at the key commutative square
\begin{equation}
\label{squarekey2}
\xymatrix{
\D_{\on{gr}}(A^0)_{\ge n}   & \ar[l]_-{\dGamma_{\ge n}} \DQgr(A^0)  \\
\D_{\on{gr}}(A)_{\ge n}  \ar[u]_-{G}  & \ar[l]_-{\dGamma_{\ge n}} \DQgr(A). \ar[u]_-{\ov{G}} \\
}
\end{equation}
Let $\w{M} \in \Dqgr(A)$. We have $\ov{G}(\w{M}) \in \Dqgr(A^0)$. By our assumptions on $A^0$, Lemma~\ref{lemma:AZ}(2) implies that $\dGamma_{\ge n}(\ov{G}(\w{M})) \in \Dbgr(A^0)_{\ge n}$. By the commutativity of \eqref{squarekey2}, it follows that $G(\dGamma_{\ge n}(\w{M})) \in \Dbgr(A^0)_{\ge n}$, and this implies $\dGamma_{\ge n}(\w{M}) \in \Dbgr(A)_{\ge n}$. Thus, $\dGamma_{\ge n}$ is the right adjoint of $\pi_n$.
\end{proof}

We record the following observation, which follows from the proof of Proposition~\ref{prop:faithful}:

\begin{prop}
\label{prop:explicit}
Let $A$ be as in Setup~\ref{setup} and $M$ a dg-$A$-module. Denote by $G : \Dbgr(A) \to \Dbgr(A^0)$ the restriction of scalars functor. There is an isomorphism
$$
G(\dGamma_*(\w{M})) \cong \underset{p \to \infty}{\colim}_{\text{ }}\RHom_{A^0}(A^0_{\ge p}, G(M)).
$$
\end{prop}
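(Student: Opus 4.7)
The plan is to reduce to the case $A = A^0$ via the commutative square established in the proof of Proposition~\ref{prop:faithful}, and then invoke the classical description of the derived section functor for a connected Noetherian graded algebra.

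First I would apply the commutative square \eqref{squarekey} from the proof of Proposition~\ref{prop:faithful}, which provides a natural isomorphism $G \circ \dGamma_* \cong \dGamma_* \circ \ov{G}$. Applied to $\w{M}$, and noting that $\ov{G}(\w{M})$ is by construction the image in $\DQgr(A^0)$ of $G(M) \in \D_{\on{gr}}(A^0)$, this reduces the claim to showing that for every $N \in \D_{\on{gr}}(A^0)$ there is a natural isomorphism
$$
\dGamma_*(\w{N}) \;\cong\; \colim_{p \to \infty} \RHom_{A^0}(A^0_{\ge p},\, N).
$$

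To prove the reduced statement, I would define the functor $F(N) \ce \colim_p \RHom_{A^0}(A^0_{\ge p}, N)$ on $\D_{\on{gr}}(A^0)$ and verify that $F$ realizes the Bousfield localization of $\D_{\on{gr}}(A^0)$ along $\D^{\Tors}_{\on{gr}}(A^0)$. Applying $\RHom_{A^0}(-, N)$ to the short exact sequences $0 \to A^0_{\ge p} \to A^0 \to A^0/A^0_{\ge p} \to 0$ and passing to the sequential homotopy colimit in $p$ produces an exact triangle
$$
\dGamma_{A^0_{\ge 1}}(N) \;\to\; N \;\to\; F(N),
$$
whose first term, being derived $A^0_{\ge 1}$-torsion, lies in $\D^{\Tors}_{\on{gr}}(A^0)$. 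Consequently the canonical map $N \to F(N)$ becomes an isomorphism after passing to $\DQgr(A^0)$.

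The main obstacle is to verify that $F(N)$ lies in the right orthogonal $\D^{\Tors}_{\on{gr}}(A^0)^{\perp}$, for only then does the universal property of Bousfield localization identify $F(N)$ with $\dGamma_*(\w{N})$. By Lemma~\ref{thick} and the standard fact that right orthogonals are determined by generators of the localizing subcategory, this reduces to showing $\underline{\Ext}^i_{A^0}(\k(j), F(N)) = 0$ for every $i, j \in \Z$. The connected Noetherian hypothesis on $A^0$ ensures that $\k$ admits a graded free resolution by finitely generated $A^0$-modules, and a cofinality argument for $\{A^0_{\ge p}\}_p$ then reduces the vanishing to the classical non-derived statement in Artin--Zhang~\cite[Section 7]{AZ}. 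Assembling these ingredients yields the desired identification, and combining with the reduction from the commutative square completes the proof.
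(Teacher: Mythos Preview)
Your reduction via the commutative square~\eqref{squarekey} is exactly the paper's first step. For the $A^0$ case, however, the paper does not carry out the Bousfield localization verification you sketch; it instead identifies $\Dqgr(A^0)$ with the derived category of the abelian quotient $\qgr(A^0)$ via \cite[Lemma~4.4.1]{krause}, then uses the adjunction isomorphism $\dGamma_*(\w{N}) \cong \bigoplus_{i,j}\Hom_{\Dqgr(A^0)}(\widetilde{A^0},\widetilde{N(i)[j]})$ together with \cite[Lemma~4.1]{Yekutieli-Zhang} to obtain the colimit formula directly. Your route is in effect a re-derivation of the Yekutieli--Zhang lemma.

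Your outline is sound, but the key step---showing $F(N)\in\D^{\Tors}_{\gr}(A^0)^{\perp}$---is not adequately justified as written. Two points need attention. First, Lemma~\ref{thick} concerns the \emph{thick} subcategory $\D^{\tors}_{\gr}$, not the localizing subcategory $\D^{\Tors}_{\gr}$; you need the (easy, but distinct) statement that $\D^{\Tors}_{\gr}(A^0)$ is generated as a localizing subcategory by $\{\k(j)\}$. Second, and more substantively, the vanishing of $\underline{\Ext}^i_{A^0}(\k(j),F(N))$ requires commuting $\RHom_{A^0}(\k,-)$ with the filtered colimit defining $F(N)$. Since $\k$ is typically not compact in $\D_{\gr}(A^0)$, this is not automatic. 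It does hold, but the reason is the graded structure: the minimal free resolution $P_\bullet\to\k$ has $P_i$ generated in internal degrees $\ge i$, so $(P_\bullet)_d$ is a bounded complex for each fixed $d$, and hence $\uHom_{A^0}(P_\bullet,-)$ commutes with filtered colimits degreewise. Your phrase ``a cofinality argument \dots\ reduces the vanishing to the classical non-derived statement in Artin--Zhang~\cite[Section~7]{AZ}'' does not make this mechanism visible, nor does it specify which statement in \cite{AZ} is meant (presumably Proposition~7.1 or~7.2). Once you supply this commutation argument and pin down the reference, your proof goes through; as written it has a gap at precisely the point where the paper chose to cite the literature.
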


\begin{proof}
Let $\ov{G} \co \DQgr(A) \to \DQgr(A^0)$ denote the functor induced by restriction of scalars. By the commutativity of the square~\eqref{squarekey}, it suffices to show
\begin{equation}
\label{eqn:a0}
\RR\Gamma_*(\ov{G}(\w{M})) \cong \underset{p \to \infty}{\colim}_{\text{ }}\RHom_{A^0}(A^0_{\ge p}, G(M)).
\end{equation}
Noting that, by \cite[Lemma 4.4.1]{krause}, $\Dqgr(A^0)$ coincides with the derived category of the abelian quotient $\qgr(A^0)$ of finitely generated $A^0$-modules by torsion modules, the isomorphism \eqref{eqn:a0} follows from (the proof of) \cite[Lemma 4.1]{Yekutieli-Zhang}, along with the adjunction isomorphism $\dGamma_*(\w{N})  \cong \bigoplus_{i, j \in \Z} \Hom_{\Dqgr(A^0)}(\widetilde{A^0}, \widetilde{N(i)[j]})$ for $N \in \Dbgr(A^0)$. 
\end{proof}

\subsection{Sheaves of dg-algebras}

In certain settings, the $\DQgr$ and $\Dqgr$ constructions for dg-algebras may be interpreted as derived categories of sheaves. Our next goal is to make this precise.

\begin{defn}
Let $Y$ be a scheme. A \emph{dg-$\OO_Y$-algebra} is a complex $\B$ of $\OO_Y$-modules such that $\Gamma(U, \B)$ is a differential $\Z$-graded $\Gamma(U, \OO_Y)$-algebra for all open sets $U$ in $Y$, and the restriction maps for $\B$ are morphisms of differential $\Z$-graded $\Gamma(Y, \OO_Y)$-algebras. Given a dg-$\OO_Y$-algebra $\B$, a \emph{dg-$\B$-module} is a complex $\C$ of $\OO_Y$-modules such that $\Gamma(U, \C)$ is a differential $\Z$-graded $\Gamma(U,\B)$-module for all open sets $U$ in $Y$, and the restriction maps for $\C$ are morphisms of differential $\Z$-graded $\Gamma(Y, \B)$-modules. A dg-$\B$-module $\C$ is \emph{quasi-coherent} (resp. \emph{coherent}) if $\bigoplus_{i \in \Z} \C^i$ is quasi-coherent (resp. coherent) as an $\OO_Y$-module. Morphisms of dg-$\B$-modules are defined in the evident way. Let $\QCoh(\B)$ (resp. $\Coh(\B)$) denote the category of quasi-coherent (resp. coherent) dg-$\B$-modules, 
 and let $\D(\QCoh{\B})$ and $\D(\Coh{\B})$ denote their derived categories: see \cite[\href{https://stacks.math.columbia.edu/tag/0FT1}{Tag 0FT1}]{stacks-project} for details.
 Given $\mathcal{M}, \mathcal{N} \in \QCoh(\B)$, the Hom sheaf $\mathcal{H}\text{om}_{\B}(\mathcal{M}, \mathcal{N})$ is defined in the evident way: see  \cite[\href{https://stacks.math.columbia.edu/tag/0FRN}{Tag 0FRN}]{stacks-project}. Note that $\mathcal{H}\text{om}_{\B}(\mathcal{M}, \mathcal{N})$ is a complex of $\OO_Y$-modules but need not be a dg-$\B$-module. We have $\Hom_{\QCoh(\B)}(\mathcal{M}, \mathcal{N}) \cong Z^0\Gamma(\mathcal{H}\text{om}_{\B}(\mathcal{M}, \mathcal{N}))$ as $\k$-vector spaces. 
\end{defn}

We adopt the following setup in this subsection:

\begin{setup}
\label{setup2}
Let $A$ be as in Setup~\ref{setup}. Assume $A^0$ is (strictly) commutative, Noetherian, and generated in degree 1; and suppose $A$ is finitely generated as an $A^0$-module. Let $X$ denote the projective $\k$-scheme $\Proj(A^0)$. We assume also that $A$ is \emph{saturated} as an $A^0$-module, meaning that the natural map $A \to \bigoplus_{i \in \Z} H^0(X, \w{A}(i))$ is an isomorphism. Equivalently, letting $\mathfrak{n}$ denote the homogeneous maximal ideal of $A^0$, we assume $H^0_{\mathfrak{n}}(A) = H^1_{\mathfrak{n}}(A) = 0$.
\end{setup}

\begin{example}
Suppose $R$ is a standard graded polynomial ring over $\k$ in at least 2 variables, and denote its homogeneous maximal ideal by $\mathfrak{n}$. If $A$ is the Koszul complex on a (not necessarily regular) sequence of homogeneous elements in $R$, or if $A$ is the dg-algebra $F \otimes_R F$ from \Cref{Ex:gorenstein_tensor_product}, then $A$ satisfies the conditions in Setup~\ref{setup2}. For instance, $A$ is a saturated $R$-module since it is free over $R$, and $H^0_{\mathfrak{n}}(R) = H^1_{\mathfrak{n}}(R) = 0$.
\end{example}

Let $A$ be as in Setup~\ref{setup2}. Recall from  Remark~\ref{A0} that the differential on every dg-$A$-module is $A^0$-linear. We have a dg-$\OO_X$-algebra $\A$ given by the sheafification of the complex $A$ of graded $A^0$-modules, with $\OO_X$-linear differential induced by $d$. There is a functor $\Sh \co \Mod(A) \to \QCoh(\A)$, where $\Sh(M, d_M)$ is the sheafification of the complex $M$ of graded $A^0$-modules with $\OO_X$-linear differential induced by $d_M$.  We also have a global sections functor $\Gamma_* \co \Qcoh(\A) \to \Mod(A)$ that sends an object $\mathcal{M} \in \Qcoh(\A)$ to the complex of $A^0$-modules $\Gamma_*(\mathcal{M}) = \bigoplus_{i \in \Z} H^0(X, \mathcal{M}(i))$ equipped with the evident dg-$A$-module structure.

The functor $\Sh$ is exact and therefore induces a triangulated functor on derived categories $\Sh \co \D_{\on{gr}}(A) \to \D(\Qcoh{\A})$. On the other hand, the functor $\Gamma_* \co \Qcoh(\A) \to \Mod(A)$ induces a right derived functor $\RR\Gamma_* \co \D(\Qcoh{\A}) \to \D_{\on{gr}}(A)$ (the overlap of notation between $\RR\Gamma_*$ in this section and the derived global sections functors on $\Dqgr(A)$ defined in the previous section should not cause confusion and is justified by Corollary~\ref{cor:sections} below). Let us give an explicit construction of $\RR\Gamma_*$, following \cite[\href{https://stacks.math.columbia.edu/tag/0FTN}{Tag 0FTN}]{stacks-project} and \cite[\href{https://stacks.math.columbia.edu/tag/0FTP}{Tag 0FTP}]{stacks-project}(2). 
A dg-$\A$-module $\I$ is said to be \textit{K-injective} if, for every exact dg-$\A$-module $\M$, the complex $\Hom_{\Qcoh(\A)}(\M,\I)$ of $\k$-vector spaces is exact. 
Let $\mathcal{M} \in \Qcoh(\A)$, and choose a quasi-isomorphism $\M\xra{\simeq} \I$ of dg-$\A$-modules such that $\I$ is an injective object in the category of graded $\A$-modules (i.e. $\I$ is \emph{graded injective}, in the sense of \cite[\href{https://stacks.math.columbia.edu/tag/0FSP}{Tag 0FSP}]{stacks-project}) and also $K$-injective; such resolutions always exist, and the choices can be made functorially \cite[\href{https://stacks.math.columbia.edu/tag/0FT0}{Tag 0FT0}]{stacks-project}. We define $\RR \Gamma_*(\mathcal{M}) \ce \Gamma_*(\I)$, and we set $\RR \Gamma_{\ge n} \ce \tau_{\ge n} \RR\Gamma$.

\begin{lemma}
\label{lem:adjointsheaf}
Let $A$ be as in Setup~\ref{setup2}.
\begin{enumerate}
\item The functor $\Gamma_* \co \Qcoh(\A) \to \Mod(A)$ is the right adjoint of $\Sh$. In fact, there is a natural isomorphism of complexes of graded $A^0$-modules $\Gamma_*(\mathcal{H}\emph{om}_{\A}(\Sh(M), \N)) \cong \uHom_A(M, \Gamma_*(\N))$ for all dg-$A$-modules $M$ and $\N \in \Qcoh(\A)$.
\item The functor $\RR \Gamma_*$ is the right adjoint of $\Sh \co \D_{\on{gr}}(A) \to \D(\Qcoh{\A})$. Consequently, for all $n \in \Z$, the functor $\RR\Gamma_{\ge n}$ is the right adjoint of $\D_{\on{gr}}(A)_{\ge n} \into \D_{\on{gr}}(A) \xra{\Sh} \D(\Qcoh{\A})$.
\end{enumerate}
\end{lemma}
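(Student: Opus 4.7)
For part (1), the plan is to first construct a natural isomorphism
\[
\Gamma_*(\mathcal{H}\text{om}_{\A}(\Sh(M), \N)) \cong \uHom_A(M, \Gamma_*(\N))
\]
of complexes of graded $A^0$-modules, and then obtain the Hom-set adjunction by extracting bidegree $(0,0)$ cocycles of the internal-degree-$0$ part. I would first argue the underlying bigraded $A^0$-module isomorphism using the classical Serre adjunction between $\widetilde{(\cdot)}$ and $\Gamma_*$ for graded modules over $A^0$, which holds because $A^0$ is commutative and generated in degree $1$ (Setup~\ref{setup2}). Applied cohomological-degree-wise, this adjunction provides, for each $(i, p) \in \Z^2$, a bijection between $A^\nat$-linear bidegree $(i, p)$ maps $M \to \Gamma_*(\N)$ and cohomological-degree-$p$ $\A^\nat$-linear maps $\Sh(M) \to \N(i)$; summing over $(i, p)$ assembles these into the desired bigraded isomorphism. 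It then remains to verify that this isomorphism intertwines the commutator-style differentials on both sides, which should be a routine term-by-term check since sheafification and global sections are functorial in complexes. Passing to internal-degree-$0$ cocycles then yields the Hom-set adjunction $\Hom_{\QCoh(\A)}(\Sh(M), \N) \cong \Hom_{\Mod_{\on{gr}}(A)}(M, \Gamma_*(\N))$.

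For part (2), the plan is to deduce the derived adjunction from (1) using resolutions. Since $\Sh$ is exact, it descends to triangulated functors on derived categories without further modification. Given $M \in \D_{\on{gr}}(A)$ and $\N \in \D(\Qcoh(\A))$, I would choose a K-projective resolution $P \iso M$ in $\Mod_{\on{gr}}(A)$ (by \cite{Keller1994}) and a K-injective resolution $\N \iso \I$ in $\Qcoh(\A)$ (from the Stacks project tags cited in the excerpt), giving $\RR\Gamma_*(\N) = \Gamma_*(\I)$ and a quasi-isomorphism $\Sh(P) \iso \Sh(M)$. The chain of natural identifications
\begin{align*}
\Hom_{\D(\Qcoh(\A))}(\Sh(M), \N)
& \cong \Hom_{K(\Qcoh(\A))}(\Sh(P), \I) \\
& \cong H^0\bigl(\Gamma(X, \mathcal{H}\text{om}_{\A}(\Sh(P), \I))\bigr) \\
& \cong H^0\bigl(\uHom_A(P, \Gamma_*(\I))_0\bigr) \\
& \cong \Hom_{\D_{\on{gr}}(A)}(M, \RR\Gamma_*(\N))
\end{align*}
then proves the adjunction: the first isomorphism uses K-injectivity of $\I$, the third uses part (1) restricted to internal degree $0$, and the last uses K-projectivity of $P$. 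Naturality in both variables is inherited from the pointwise adjunction of part (1).

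The consequence for $\RR\Gamma_{\ge n}$ is then formal: $\tau_{\ge n}$ is by construction the right adjoint of the inclusion $\D_{\on{gr}}(A)_{\ge n} \hookrightarrow \D_{\on{gr}}(A)$, so the composition $\RR\Gamma_{\ge n} = \tau_{\ge n}\RR\Gamma_*$ is right adjoint to the composite $\D_{\on{gr}}(A)_{\ge n} \hookrightarrow \D_{\on{gr}}(A) \xrightarrow{\Sh} \D(\Qcoh(\A))$. Full faithfulness follows from that of $\RR\Gamma_*$, which in turn follows from the fact that the counit $\Sh \circ \Gamma_* \to \mathrm{id}$ of the pointwise adjunction is an isomorphism on quasi-coherent sheaves.

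The main technical obstacle I anticipate lies in part (1), specifically in carefully bookkeeping the interaction between the two gradings (internal versus cohomological) and verifying that the bigraded isomorphism is compatible with differentials; once this is done, part (2) should follow by standard derived-categorical manipulations as sketched above.
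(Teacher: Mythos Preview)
Your proposal is correct and follows essentially the same approach as the paper: for (1) both reduce to the classical Serre adjunction over $A^0$ and then check that it restricts to $\A$-linear maps compatibly with differentials, and for (2) both choose a $K$-projective (semi-free, in the paper) resolution of $M$ and a $K$-injective resolution of $\N$ and run the same chain of isomorphisms through part (1). Your closing remark about full faithfulness of $\RR\Gamma_*$ is extraneous---it is not part of the lemma's statement and is established separately later in the paper.
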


\begin{proof}
Let $M \in \Mod(A)$ and $\N \in \Qcoh(\A)$. The first statement in (1) follows immediately from the isomorphism $\Gamma_*(\mathcal{H}\emph{om}_{\A}(\Sh(M), \N)) \cong \uHom_A(M, \Gamma_*(\N))$ by passing to bidegree (0,0) cocycles, so, to prove (1), we need only consider the second statement. 
This is well-known over $A^0$, and so there is an isomorphism of complexes of graded $A^0$-modules 
\begin{equation}
\label{eqn:X}
\Gamma_* (\mathcal{H}\text{om}_{X}(\Sh(M), \N)) \cong \uHom_{A^0}(M, \Gamma_*(\N)).
\end{equation}
It is routine to check that \eqref{eqn:X} restricts to the desired isomorphism. 
For (2), choose a quasi-isomorphism $\N \xra{\simeq} \I$, where $\I$ is an injective object in the category of graded $\A$-modules  and also $K$-injective. Choose also  a semi-free resolution $F \xra{\simeq} M$. Let $K(A)$ (resp. $K(\A)$) denote the homotopy category of dg-$A$-modules (resp. dg-$\A$-modules). We note that $\Hom_{\D_{\on{gr}}(A)}(M, L) = \Hom_{K(A)}(F, L)$ for all $L \in \Mod(A)$, and $\Hom_{\D(\Qcoh{\A})}(\mathcal{G}, \N) = \Hom_{K(\A)}(\mathcal{G}, \mathcal{I})$ for all $\mathcal{G} \in \D(\Qcoh{\A})$; see \cite[Theorem 10.1.13]{Yekutieli2020} and \cite[\href{https://stacks.math.columbia.edu/tag/0FT8}{Tag 0FT8}]{stacks-project}. We now compute:
\begin{align*}
\Hom_{\D_{\on{gr}}(A)}(M, \RR \Gamma_*(\N)) &= \Hom_{\D_{\on{gr}}(A)}(M, \Gamma_*(\I)) \\
&\cong \Hom_{K(A)}(F, \Gamma_*(\I)) \\
& = H^0 \uHom_A(F, \Gamma_*(\I))_0 \\
& \cong H^0 \Gamma_*(\mathcal{H}\text{om}_{\A}(\Sh(F), \I))_0 \\
& = \Hom_{K(\A)}(\Sh(F), \I) \\
& \cong \Hom_{\D(\Qcoh{\A})}(\Sh(M), \N).
\end{align*} 
This proves the first statement of (2). The second statement follows immediately from the first, since $\tau_{\ge n}$ is the right adjoint of the inclusion $\D_{\on{gr}}(A)_{\ge n} \into \D_{\on{gr}}(A)$. 
\end{proof}

The functor $\Sh$ induces a functor $\ov{\Sh} \co \DQgr(A) \to \D(\Qcoh{\A})$.
On the other hand, composing $\RR\Gamma_* \co \D(\Qcoh{\A}) \to \D_{\on{gr}}(A)$ with the projection functor $\Pi: \D_{\on{gr}}(A)\to \DQgr(A)$, one obtains a triangulated functor $T: \D(\Qcoh{\A}) \to \DQgr(A)$.

\begin{thm}
\label{prop:sheaf}
Let $A$ be as in Setup~\ref{setup2}. 
\begin{enumerate}
\item The functor $\ov{\Sh} \co \DQgr(A) \to \D(\Qcoh{\A})$ is the left adjoint of $T$. 
\item The functors $\ov{\Sh}$ and $T$ are inverse equivalences. 
\item We have $\ov{\Sh}(\Dqgr(A)) \subseteq \D(\coh{\A})$, and $T(\D(\coh{\A})) \subseteq \Dqgr(A).$ Consequently, the adjunction in (1) restricts to an adjunction $\ov{\Sh} \co \Dqgr(A) \leftrightarrows \D(\coh{\A}) \co T$, and these functors are also inverse equivalences.
\end{enumerate}
\end{thm}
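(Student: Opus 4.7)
Part (1) is a formal consequence of the adjunction $\Sh \dashv \RR\Gamma_*$ established in Lemma~\ref{lem:adjointsheaf}(2). First I would verify that $\Sh$ annihilates $\D^{\Tors}_{\on{gr}}(A)$: if $M$ is torsion, every class in $H(M)$ is killed by some $A^0_{\ge n}$, forcing the stalks of $\Sh(M)$ at every relevant homogeneous prime of $A^0$ to be exact. Hence $\Sh$ factors through $\Pi$ as $\Sh = \ov{\Sh} \circ \Pi$. The adjunction $\ov{\Sh} \dashv T$ then follows from the standard Bousfield-localization principle: for any $\N \in \D(\Qcoh{\A})$ and $Z \in \D^{\Tors}_{\on{gr}}(A)$, one has $\Hom(Z, \RR\Gamma_*(\N)) = \Hom(\Sh(Z), \N) = 0$, so $\RR\Gamma_*(\N)$ lies in the essential image of the fully faithful right adjoint to $\Pi$ given by Proposition~\ref{prop:rightadjoint}; this identifies $T = \Pi \circ \RR\Gamma_*$ as the right adjoint of $\ov{\Sh}$.

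The heart of the argument is Part (2), where the plan is to show the unit $\eta \co \id_{\DQgr(A)} \to T \circ \ov{\Sh}$ and the counit $\varepsilon \co \ov{\Sh} \circ T \to \id_{\D(\Qcoh{\A})}$ are natural isomorphisms by reducing both assertions to the classical Serre equivalence on $X = \Proj(A^0)$. For the counit at $\N$, I would pick a $K$-injective graded-injective resolution $\N \xra{\simeq} \I$, so that $\varepsilon_\N$ becomes the morphism $\Sh(\Gamma_*(\I)) \to \I$. Forgetting the $\A$-action, this is precisely the counit of the classical adjunction between graded $A^0$-modules and quasi-coherent sheaves on $X$; since $A^0$ is commutative, Noetherian, connected, and generated in degree $1$, classical Serre theory shows it is a quasi-isomorphism of complexes of $\OO_X$-modules, hence an isomorphism in $\D(\Qcoh{\A})$. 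For the unit, it suffices by essential surjectivity of $\Pi$ to check at objects of the form $\Pi(M)$. The cone of $M \to \RR\Gamma_*(\Sh(M))$ restricts over $A^0$ to the classical local cohomology complex at the irrelevant ideal, which is $A^0$-torsion; since the torsion condition defining $\D^{\Tors}_{\on{gr}}(A)$ is detected by the $A^0$-action alone, this cone lies in $\D^{\Tors}_{\on{gr}}(A)$, so $\Pi$ sends the unit to an isomorphism.

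For Part (3), the inclusion $\ov{\Sh}(\Dqgr(A)) \subseteq \D(\coh{\A})$ follows from Proposition~\ref{dbdf}: every object of $\Dbgr(A)$ is representable by a finitely generated dg-$A$-module, and since $A$ is finitely generated over the Noetherian ring $A^0$, the sheafified complex has coherent $\OO_X$-module cohomology. For the reverse inclusion $T(\D(\coh{\A})) \subseteq \Dqgr(A)$, I would reuse the commutative-square argument from the proof of Proposition~\ref{prop:faithful}, which applies because $A^0$ is strictly commutative: restricting to $A^0$-modules and invoking Lemma~\ref{lemma:AZ}(2) shows $\tau_{\ge n} \RR\Gamma_*(\M) \in \Dbgr(A)_{\ge n}$ for $n \ll 0$, whence $T(\M) = \Pi(\RR\Gamma_*(\M))$ coincides in $\DQgr(A)$ with $\Pi(\tau_{\ge n} \RR\Gamma_*(\M))$ and so lies in $\Dqgr(A)$, using the equivalence $\D_{\on{gr}}(A)_{\ge n}/\D^{\Tors}_{\on{gr}}(A)_{\ge n} \simeq \DQgr(A)$. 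The main obstacle I anticipate is the bookkeeping in Part (2): one must carefully match the $K$-injective graded-injective resolutions of dg-$\A$-modules (needed to compute $\RR\Gamma_*$ as a derived functor of dg-$\A$-modules) with resolutions adequate to invoke the classical Serre statement on underlying complexes of $\OO_X$-modules. Once that compatibility is secured, Parts (1) and (3) follow by purely formal adjunction manipulations.
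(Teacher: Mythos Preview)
Your proposal is correct and follows essentially the same route as the paper. For (1) the paper cites \cite[Lemma 1.1]{Orlov2009}, which is exactly the Bousfield-localization principle you invoke; for (2) the paper also chooses a graded-injective $K$-injective resolution $\I$ and uses Hartshorne~II.5.15 (your ``classical Serre theory'') to identify the counit, then reduces the unit to the $A^0$-level torsion statement via \cite[\href{https://stacks.math.columbia.edu/tag/0FTW}{Tag 0FTW}]{stacks-project}; for (3) the paper's commutative diagram with $\dGamma_{\ge 0}$ and forgetful maps to $X=\Proj(A^0)$ is a repackaging of the square from Proposition~\ref{prop:faithful} that you propose to reuse, and the boundedness input is the same fact about $\Db(\coh X)\to\Dbgr(A^0)_{\ge 0}$ underlying Lemma~\ref{lemma:AZ}(2). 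The only cosmetic difference is that the paper records the counit map $\Sh\Gamma_*(\I)\to\I$ as an honest isomorphism of $\OO_X$-modules (termwise via Hartshorne~II.5.15) rather than merely a quasi-isomorphism, which slightly streamlines the compatibility concern you flag at the end.
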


\begin{proof} 
(1) follows from Lemma~\ref{lem:adjointsheaf}(2) and \cite[Lemma 1.1]{Orlov2009}. Let $\mathcal{M} \in \D(\Qcoh{\A})$. Choose a quasi-isomorphism $\mathcal{M} \xra{\simeq} \I$, where $\I$ is an injective object in the category of graded $\A$-modules and also $K$-injective. The canonical map $\Sh\Gamma_*(\I) \to \I$ is an isomorphism of $\OO_X$-modules by \cite[Proposition II.5.15]{hartshorne}, and one checks that it is a morphism of $\A$-modules. We therefore have an isomorphism $\mathcal{M} \cong \Sh \RR\Gamma_*(\mathcal{M})$ 
in $\D(\Qcoh{\A})$; in particular, $\ov{\Sh}$ is essentially surjective. To prove $\ov{\Sh}$ is fully faithful, we show that the unit of the adjunction $\ov{\Sh} \co \DQgr(A) \rightleftarrows \D(\Qcoh{\A}) \co T$ is an isomorphism. This amounts to the assertion that, given $M \in \Mod(A)$, the natural map $M \to \RR\Gamma_*(\w{M})$ induces an isomorphism on cohomology up to torsion. This statement only concerns the underlying structure of complexes of $A^0$-modules. Since it is well-known that $\RR\Gamma_*$ determines an equivalence $\D(\coh{X}) \xra{\simeq} \Dqgr(A^0)$, it follows from \cite[\href{https://stacks.math.columbia.edu/tag/0FTW}{Tag 0FTW}]{stacks-project} that the unit of this adjunction must be an isomorphism; this proves (2). It is clear that $\ov{\Sh}(\Dqgr(A)) \subseteq \D(\coh{\A})$. Going the other direction: once again by \cite[\href{https://stacks.math.columbia.edu/tag/0FTW}{Tag 0FTW}]{stacks-project}, we have a commutative diagram
$$
\xymatrix{
\D(\Qcoh{\A}) \ar[d] \ar[r]^-{\dGamma_{\ge 0}} & \D_{\on{gr}}(A)_{\ge 0} \ar[d] \ar[r] & \DQgr(A) \ar[d]\\
\D(\Qcoh{X}) \ar[r]^-{\dGamma_{\ge 0}} & \D_{\on{gr}}(A^0)_{\ge 0} \ar[r] & \DQgr(A^0),
}
$$
where the vertical maps are forgetful functors, and the right-most horizontal maps are the canonical ones. Observe that composing the two maps along the top row gives the functor $T$; it therefore suffices to show that $\dGamma_{\ge 0}$ maps $\D(\coh{\A})$ to $\Dbgr(A)_{\ge 0}$. It is well-known that the map $\dGamma_{\ge 0} \co \D(\Qcoh{X}) \to \D_{\on{gr}}(A^0)_{\ge 0}$ maps $\Db(\coh{X})$ to $\Dbgr(A^0)_{\ge 0}$, and an object in $\D_{\on{gr}}(A)_{\ge 0}$ is contained in $\Dbgr(A^0)_{\ge 0}$ if and only if it is contained in $\Dbgr(A)_{\ge 0}$. Thus, the commutativity of the diagram implies (3).
\end{proof}

\begin{cor}
\label{cor:sections}
Let $A$ be as in Setup~\ref{setup2}. For all $i \in \Z$, we have commutative triangles
$$
\xymatrix{
\D(\Qcoh{\A}) \ar[d]^{\simeq}_-{T} \ar[r]^-{\RR\Gamma_*} & \D_{\on{gr}}(A)  \\
\DQgr(A), \ar[ru]_-{\RR\Gamma_*} 
}
\quad \quad
\xymatrix{
\D(\Qcoh{\A}) \ar[d]^{\simeq}_-{T} \ar[r]^-{\RR\Gamma_{\ge i}} & \D_{\on{gr}}(A)_{\ge i}  \\
\DQgr(A), \ar[ru]_-{\RR\Gamma_{\ge i}} 
}
\quad \quad
\xymatrix{
\D(\coh{\A}) \ar[d]^{\simeq}_-{T} \ar[r]^-{\RR\Gamma_{\ge i}} & \Dbgr(A)_{\ge i} \\
\Dqgr(A). \ar[ru]_-{\RR\Gamma_{\ge i}} 
}
$$
That is, our versions of derived global section functors may be identified, in this setting, via the equivalence $T$ from Theorem~\ref{prop:sheaf}(2).
\end{cor}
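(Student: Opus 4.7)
The plan is to reduce everything to a uniqueness-of-right-adjoints argument, using the fact that both notions of derived global sections in the statement are right adjoints to canonical functors that are related through the equivalence $T$.

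First, I would observe that the sheafification functor $\Sh\co \D_{\on{gr}}(A) \to \D(\Qcoh{\A})$ vanishes on $\D^{\Tors}_{\on{gr}}(A)$, so it descends through the projection $\Pi \co \D_{\on{gr}}(A) \to \DQgr(A)$ to the functor $\ov{\Sh}$ of Theorem~\ref{prop:sheaf}. In other words, $\Sh = \ov{\Sh} \circ \Pi$. Now take right adjoints of both sides: by Lemma~\ref{lem:adjointsheaf}(2), the right adjoint of $\Sh$ is the sheaf-theoretic $\RR\Gamma_*$; by Proposition~\ref{prop:rightadjoint}, the right adjoint of $\Pi$ is the $\qgr$-theoretic $\RR\Gamma_*$; and by Theorem~\ref{prop:sheaf}(1), the right adjoint of $\ov{\Sh}$ is $T$. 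Uniqueness of right adjoints then yields a natural isomorphism between $\RR\Gamma_*^{\text{sheaf}}$ and $\RR\Gamma_*^{\qgr} \circ T$, which is precisely the first commutative triangle.

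For the second triangle, I would simply apply the truncation functor $\tau_{\ge i} \co \D_{\on{gr}}(A) \to \D_{\on{gr}}(A)_{\ge i}$ to both sides of the natural isomorphism from the first triangle. Since $\RR\Gamma_{\ge i}$ is defined in both settings as $\tau_{\ge i} \RR\Gamma_*$, this is immediate.

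For the third triangle, I would restrict to coherent objects: by Theorem~\ref{prop:sheaf}(3), $T$ restricts to an equivalence $\D(\coh{\A}) \xrightarrow{\simeq} \Dqgr(A)$. By Proposition~\ref{prop:faithful} (which applies since $A^0$ is strictly commutative by Setup~\ref{setup2}), the functor $\RR\Gamma_{\ge i}^{\qgr}$ maps $\Dqgr(A)$ into $\Dbgr(A)_{\ge i}$. Composing $T$ with $\RR\Gamma_{\ge i}^{\qgr}$ therefore lands in $\Dbgr(A)_{\ge i}$, and the commutativity inherited from the second triangle shows that $\RR\Gamma_{\ge i}^{\text{sheaf}}$ likewise maps $\D(\coh{\A})$ to $\Dbgr(A)_{\ge i}$, with the two functors agreeing up to natural isomorphism.

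The only subtle point — and it is hardly an obstacle given the work already done — is ensuring that the identification of right adjoints in Step~1 is natural and functorial; this is automatic from the abstract adjunction formalism together with the fact that $T$ and $\ov{\Sh}$ are mutually inverse equivalences (Theorem~\ref{prop:sheaf}(2)), so the adjoint $T$ of $\ov{\Sh}$ may be plugged in unambiguously on either side of the isomorphism.
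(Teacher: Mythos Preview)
Your proposal is correct and follows essentially the same approach as the paper: the paper's proof simply cites Propositions~\ref{prop:rightadjoint} and~\ref{prop:faithful}, Lemma~\ref{lem:adjointsheaf}(2), Theorem~\ref{prop:sheaf}, and invokes uniqueness of right adjoints. You have spelled out exactly how these pieces fit together, which is precisely the intended argument.
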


\begin{proof}
This follows from Propositions~\ref{prop:rightadjoint} and~\ref{prop:faithful}, Lemma~\ref{lem:adjointsheaf}(2), Theorem~\ref{prop:sheaf}, and the uniqueness of right adjoints. 
\end{proof}

\section{Proof of Theorem~\ref{main}}
\label{sec:proof}

The proof of our main result, Theorem~\ref{main}, goes roughly as follows. Let $A$ be as in Setup~\ref{setup}, where $A^0$ is Gorenstein or (strictly) commutative, and fix $i \in \Z$.  Proposition~\ref{prop:faithful} implies that the truncated derived global section functor $\dGamma_{\ge i}$ identifies $\Dqgr(A)$ with a weak semiorthogonal summand $\mathcal{D}_i$ of $\Dbgr(A)_{\ge i}$. Recall from \Cref{def:semi-free} that $\Perf(A)$ denotes the thick subcategory of $\Dbgr(A)$ given by perfect objects; let $\Dsg(A)$ denote the quotient $\Dbgr(A) / \Perf(A)$, the \emph{singularity category of $A$}. Assume in addition that $A$ is Gorenstein, and let $a$ be its Gorenstein parameter; Lemma~\ref{lemma:sod2} below yields an embedding of $\Dsg(A)$ as a semiorthogonal summand $\mathcal{T}_i$ of $\Dbgr(A)_{\ge i}$. Our proof of Theorem~\ref{main}, which mirrors Orlov's original argument, shows that $\mathcal{T}_i$ is in fact a semiorthogonal summand of $\mathcal{D}_i$ when $a \ge 0$, and $\mathcal{D}_i$ is a semiorthogonal summand of $\mathcal{T}_i$ when $a \le 0$; and moreover the complements in each case are given by exceptional collections.
\medskip

Let us fix some notation. For $i \in \Z$, let $\mathcal{P}_{<i}$ (resp. $\mathcal{P}_{\geq i}$) denote the thick subcategory of $\Perf(A)$ generated by the modules $A(e)$ for $e>-i$ (resp. $A(e)$ for $e\leq -i$), and let $\mathcal{S}_{<i}$ (resp. $\mathcal{S}_{\geq i}$) denote the thick subcategory of $\Dbgr(A)$ generated by the modules $\k(e)$ for $e>-i$ (resp. $\k(e)$ for $e\leq -i$). Define the subcategories $\mathcal{P}^{\op}_{<i}$, $\mathcal{P}^{\op}_{\geq i}$ of $\Perf(A^{\op})$ and $\S^{\op}_{< i}$, $\S^{\op}_{\ge i}$ of $\Dbgr(A^{\op})$ similarly. 

\medskip
We now recall some background on semiorthogonal decompositions, following \cite[Section 1]{Orlov2009}. 
Given a $\k$-linear triangulated category $\mathcal{B}$ and a full triangulated subcategory $\mathcal{C}$ of $\mathcal{B}$, the \emph{right orthogonal} $\mathcal{C}^{\perp}$ of $\mathcal{C}$ is the triangulated subcategory given by  $\{B \in \mathcal{B} \text{ : } \Hom_{\mathcal{B}}(C, B) = 0 \text{ for all } C \in \mathcal{C} \}$, and the \emph{left orthogonal} $^\perp\mathcal{C}$ is defined similarly. We say $\mathcal{C}$ is \emph{right admissible} (resp. \emph{left admissible}) if the inclusion $\mathcal{C} \into \mathcal{B}$ admits a right (resp. left) adjoint, and we say $\mathcal{C}$ is \emph{admissible} if it is left and right admissible. The subcategory $\mathcal{C}$ is right (resp. left) admissible if and only if for all $B \in \mathcal{B}$, there is an exact triangle
$$
B' \to B \to B'' \to B'[1]
$$
such that $B' \in \mathcal{C}$ and $B'' \in \mathcal{C}^\perp$ (resp. $B' \in$ $^\perp\mathcal{C}$ and $B'' \in \mathcal{C}$). A sequence of triangulated subcategories $\mathcal{C}_1, \dots, \mathcal{C}_n$ of $\mathcal{B}$ forms a \emph{weak semiorthogonal decomposition} (resp. \emph{semiorthogonal decomposition}) of $\mathcal{B}$ if there are left admissible (resp. admissible) subcategories
$$
\mathcal{B}_1 = \mathcal{C}_1 \subseteq \mathcal{B}_2 \subseteq \cdots \subseteq \mathcal{B}_n = \mathcal{B}
$$
such that each $\mathcal{C}_i$ is the left orthogonal of $\mathcal{B}_{i-1}$ in $\mathcal{B}_{i}$. When $\mathcal{C}_1, \dots, \mathcal{C}_n$ form a weak semiorthogonal decomposition of $\mathcal{B}$, we write $\mathcal{B} = \langle \mathcal{C}_1, \dots, \mathcal{C}_n \rangle$. 

An object $B$ in $\mathcal{B}$ is called \emph{exceptional} if $\Hom_{\mathcal{B}}(B, B[j]) = 0$ for $j \ne 0$, and $\Hom_{\mathcal{B}}(B, B) \cong\k$. An \emph{exceptional collection} in $\mathcal{B}$ is a sequence $E_1, \dots, E_n$ of exceptional objects such that $\Hom_{\mathcal{B}}(E_i, E_j[\ell]) = 0$ for all $\ell \in \Z$ when $i > j$. An exceptional collection $E_1, \dots, E_n$ is called \emph{full} if the objects $E_1, \dots, E_n$ generate all of $\mathcal{B}$. When $E_1, \dots, E_n$ form a full exceptional collection, the subcategories $\mathcal{C}_i$ of $\mathcal{B}$ generated by the $E_i$ form a semiorthogonal decomposition of $\mathcal{B}$; in this case, we write $\mathcal{B} = \langle E_0, \dots, E_n \rangle$. An exceptional collection $E_0, \dots, E_n$ is called \emph{strong} if $\Hom_{\mathcal{B}}(E_i, E_j[\ell]) = 0$ for all $i$ and $j$ when $\ell \ne 0$.

\begin{lemma}[cf. \cite{Orlov2009} Lemma 2.3]
\label{lemma:sod1}
Fix $i \in \Z$. The subcategory $\S_{<i}$ (resp. $\P_{< i}$) of $\Dbgr(A)$ is left (resp. right) admissible, and we have weak semiorthogonal decompositions
$$
\Dbgr(A)  = \langle \S_{< i}, \Dbgr(A)_{\ge i} \rangle = 
\langle \Dbgr(A)_{\ge i}, \P_{<i} \rangle, \text{ }
\on{D}^{\tors}_{\on{gr}}(A) = \langle \S_{< i}, \S_{\ge i} \rangle, \text{ }
\Perf(A)  = \langle \P_{\ge i}, \P_{< i} \rangle.
$$
\end{lemma}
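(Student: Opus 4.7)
The plan is to prove the four semiorthogonal decompositions in parallel, in each case splitting the task into a pairwise Hom-vanishing check and the construction of a decomposition triangle; the admissibility assertions for $\S_{<i}$ and $\P_{<i}$ will then follow automatically from these triangles. Throughout, the essential input is Proposition~\ref{prop:semi-free}, which guarantees that a given $M \in \Dbgr(A)_{\ge i}$ admits a semi-free resolution $G \xra{\simeq} M$ that is free over $A^{\nat}$ on a homogeneous basis $\{e_{j,\alpha}\}$, with every generator sitting in internal degree $-j \ge i$ and with each $G_k$ finite-dimensional over $\k$.

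For the semiorthogonality statements, the central computation is $\underline{\Hom}_A(A(j), N) \cong N(-j)$, from which $\underline{\Hom}_A(A(j), \k(e))$ is concentrated in bidegree $(j - e, 0)$ and $\underline{\Hom}_A(A(j), A(e))$ is concentrated in internal degrees $\ge j - e$ and non-positive cohomological degrees. Combined with a good semi-free resolution, this makes every vanishing essentially automatic: for $\Hom(\Dbgr(A)_{\ge i}, \S_{<i}) = 0$, choosing a resolution of $M$ as above causes $\underline{\Hom}_A(G, \k(e))$ to live purely in cohomological degree $0$ and in internal degrees $j - e \le -i - e < 0$ whenever $e > -i$, so its bidegree-$(0,0)$ part vanishes; for $\Hom(\P_{<i}, \P_{\ge i}) = 0$, one reduces directly to $A_{e' - e}^n = 0$ for $e > -i \ge e'$, and the remaining vanishings $\Hom(\P_{<i}, \Dbgr(A)_{\ge i}) = 0$ and $\Hom(\S_{\ge i}, \S_{<i}) = 0$ follow by the same two patterns (the latter using that $\k(e)$ for $e \le -i$ has a semi-free resolution with summands $A(j)$, $j \le e$).

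For the decomposition triangles in the two torsion-type SODs $\Dbgr(A) = \langle \S_{<i}, \Dbgr(A)_{\ge i}\rangle$ and $\D^{\tors}_{\on{gr}}(A) = \langle \S_{<i}, \S_{\ge i}\rangle$, I would use naive internal-degree truncation. Representing $X \in \Dbgr(A)$ by a finitely generated dg-module via Proposition~\ref{dbdf}, the subspace $X_{\ge i}$ of elements in internal degree $\ge i$ is automatically a sub-dg-module, because $A$ is non-negatively internally graded and $d$ preserves internal degree. The short exact sequence $0 \to X_{\ge i} \to X \to X_{<i} \to 0$ then gives the required triangle; to place $X_{<i}$ in $\S_{<i}$ I would rerun the induction on $\k$-dimension and cohomological amplitude in the proof of Lemma~\ref{thick}, noting that this induction stays within the subcategory generated by those $\k(e)$ with $-e < i$ provided the input is concentrated in internal degrees $< i$. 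When $X$ is already torsion, the same triangle gives the SOD in $\D^{\tors}_{\on{gr}}(A)$.

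The main obstacle is producing the triangles for the two perfect-type SODs $\Dbgr(A) = \langle \Dbgr(A)_{\ge i}, \P_{<i}\rangle$ and $\Perf(A) = \langle \P_{\ge i}, \P_{<i}\rangle$, since the needed complement is now perfect rather than torsion. My plan: given $X$, pick a semi-free resolution $G \xra{\simeq} X$ as in Proposition~\ref{prop:semi-free}. Because $G^{\nat}$ is free over $A^{\nat}$ on $\{e_{j,\alpha}\}$ and only finitely many of these generators satisfy $-j < i$ (by properties (1) and (3) of that proposition), the $A$-submodule $P \subseteq G$ generated by these finitely many basis elements is free of finite rank over $A^{\nat}$, hence semi-free and perfect, and it lies in $\P_{<i}$. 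Crucially, $P$ is closed under $d$: since $A$ is non-negatively internally graded, every element of $G$ in internal degree $< i$ is an $A$-combination of the $e_{j,\alpha}$ with $-j < i$, so $G_{<i} \subseteq P$ and in particular $d(e_{j,\alpha}) \in P$ for every generator of $P$. The quotient $G/P$ is free over $A^{\nat}$ on the remaining basis elements, so $(G/P)_k = 0$ for $k < i$ and thus $G/P \in \Dbgr(A)_{\ge i}$; the triangle $P \to G \to G/P$ gives the decomposition. For $X \in \Perf(A)$, starting with a finite-rank $G$ makes $G/P$ finite-rank as well, landing in $\P_{\ge i}$ and proving the last SOD.
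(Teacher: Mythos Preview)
Your proof is correct and follows the same strategy as the paper's: internal-degree truncation for the $\S$-type decompositions, and splitting a semi-free resolution from Proposition~\ref{prop:semi-free} along the internal degrees of its free generators for the $\P$-type decompositions. You supply more detail than the paper does (notably the explicit Hom-vanishing computations and the verification that the submodule $P$ generated by the low-degree basis elements is closed under the differential), but the underlying argument is essentially identical.
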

\begin{proof}
Given $M \in \Dbgr(A)$, we have a short exact sequence $0 \to M_{\ge i} \to M \to M / M_{\ge i} \to 0$. The object $M / M_{\ge i}$ is in $\S_{< i}$, and $M_{\ge i}$ is in $^{\perp} \S_{<i}$. It therefore follows from \cite[Remark 1.3]{Orlov2009} that $\S_{< i}$ is left admissible. Moreover, $\Dbgr(A)_{\ge i}$ is the left orthogonal $^\perp\S_{< i}$ of $\S_{< i}$ in $\Dbgr(A)$, i.e. we have $\Dbgr(A)  = \langle \S_{< i}, \Dbgr(A)_{\ge i} \rangle$. If $M$ is torsion, then $M_{\ge i} \in \S_{\ge i}$, and so we conclude $\on{D}_{\tors}(A) = \langle \S_{< i}, \S_{\ge i} \rangle$.

Let $M \in \Dbgr(A)$, and choose a semi-free resolution $F$ of $M$ as in Proposition~\ref{prop:semi-free}. Let $F'$ be the dg-submodule of $F$ consisting of free summands generated in internal degree strictly less than $i$. We have a short exact sequence $0 \to F' \to F \to F/F' \to 0$; observe that $F/F' \in \Dbgr(A)_{\ge i}$, and the properties of $F$ guaranteed by Proposition~\ref{prop:semi-free} imply that $F'$ is contained in $\Perf(A)$ and hence in $\P_{< i}$. This yields the remaining two semiorthogonal decompositions.
\end{proof}

\begin{lem}[cf. \cite{Orlov2009} Lemma 2.4]
    \label{lemma:sod2}
    Let $A$ be as in Setup~\ref{setup}. Assume $A$ is Gorenstein and that either $A^0$ is Gorenstein or (strictly) commutative. Let $a$ be the Gorenstein parameter of $A$, and fix $i\in \mathbb{Z}$. The subcategory $\mathcal{S}_{\geq i}$ (resp. $\mathcal{P}_{\geq i}$) is right (resp. left) admissible in $\Dbgr(A)_{\geq i}$, and there are weak semiorthogonal decompositions
    $$
    \Dbgr(A)_{\geq i}=\langle \mathcal{D}_i, \mathcal{S}_{\geq i}\rangle = \langle \mathcal{P}_{\geq i},\mathcal{T}_i\rangle,
    $$
    where $\mathcal{D}_i$ is the essential image of $\dGamma_{\ge i}$, and the composition $\mathcal{T}_i \into \Dbgr(A)_{\ge i} \xra{q} \Dsing(A)$ is an equivalence. Moreover, the right orthogonals of the subcategories $\mathcal{D}_i$ and $\mathcal{T}_i$ of $\Dbgr(A)$ are as follows:
    $$
    \mathcal{T}_i^{\perp} =  \langle  \mathcal{S}_{<i},\mathcal{P}_{\geq i}\rangle \quad \text{and} \quad
    \mathcal{D}_i^{\perp} = \langle \mathcal{P}_{\geq i+a}, \mathcal{S}_{<i}\rangle.
    $$
\end{lem}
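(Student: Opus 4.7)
The proof naturally splits into four pieces: the two semiorthogonal decompositions, the identification $\mathcal{T}_i \simeq \Dsing(A)$ via $q$, and the two orthogonality formulas. For the first SOD, the plan is to leverage the adjunction $\pi_i \dashv \dGamma_{\ge i}$ from Proposition~\ref{prop:faithful}. Since $\dGamma_{\ge i}$ is a fully faithful right adjoint, standard Bousfield-localization theory yields the SOD $\Dbgr(A)_{\ge i} = \langle \mathcal{D}_i, \ker(\pi_i)\rangle$. To identify the kernel with $\mathcal{S}_{\ge i}$, I would rerun the inductive argument of Lemma~\ref{thick} while tracking internal degrees: a torsion object with a representative supported in internal degrees $\ge i$ has cohomology in the same range, and each $\kk(e)$ appearing in the inductive decomposition must therefore satisfy $e \le -i$.

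For the second SOD, I would transport the SOD from Lemma~\ref{lemma:sod1} for $A^{\op}$ across the Gorenstein duality. Applying the contravariant equivalence $\RHom_{A^{\op}}(-,A)$ (valid by Definition~\ref{def:gorenstein}) to $\Dbgr(A^{\op}) = \langle \Dbgr(A^{\op})_{\ge 1-i}, \P^{\op}_{<1-i}\rangle$ produces a SOD $\Dbgr(A) = \langle \mathcal{P}_{\ge i}, \mathcal{P}_{\ge i}^{\perp}\rangle$; here the identification $\RHom_{A^{\op}}(\P^{\op}_{<1-i}, A) = \mathcal{P}_{\ge i}$ comes from $A(e) \mapsto A(-e)$ on free generators. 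For $M \in \Dbgr(A)_{\ge i}$, the defining triangle $t \to M \to p$ has $p \in \mathcal{P}_{\ge i} \subseteq \Dbgr(A)_{\ge i}$, so $t \in \Dbgr(A)_{\ge i} \cap \mathcal{P}_{\ge i}^{\perp} = \mathcal{T}_i$. This gives the SOD $\Dbgr(A)_{\ge i} = \langle \mathcal{P}_{\ge i}, \mathcal{T}_i\rangle$ and the left-admissibility of $\mathcal{P}_{\ge i}$. To prove $\mathcal{T}_i \simeq \Dsing(A)$ via $q$, I would first verify $\Perf(A) \cap \Dbgr(A)_{\ge i} = \mathcal{P}_{\ge i}$: by uniqueness of SOD decompositions, the $\Perf(A) = \langle \mathcal{P}_{\ge i}, \mathcal{P}_{<i}\rangle$-triangle of any $P$ in this intersection must coincide with its $\Dbgr(A) = \langle \Dbgr(A)_{\ge i}, \P_{<i}\rangle$-triangle, and the latter's $\P_{<i}$-component vanishes as $P \in \Dbgr(A)_{\ge i}$. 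Then the standard Verdier-quotient identity applied to $\Dbgr(A) = \langle \Dbgr(A)_{\ge i}, \P_{<i}\rangle$ with thick subcategory $\Perf(A)$ yields $\Dsing(A) \simeq \Dbgr(A)_{\ge i}/\mathcal{P}_{\ge i}$, which the SOD just established further identifies with $\mathcal{T}_i$; the composite equivalence is visibly $q|_{\mathcal{T}_i}$.

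The identity $\mathcal{T}_i^{\perp} = \langle \mathcal{S}_{<i}, \mathcal{P}_{\ge i}\rangle$ is the easy half: concatenating Lemma~\ref{lemma:sod1} with the SOD from the previous paragraph yields a three-piece SOD $\Dbgr(A) = \langle \mathcal{S}_{<i}, \mathcal{P}_{\ge i}, \mathcal{T}_i\rangle$, and the right orthogonal of the rightmost piece is the span of the other two. The identity $\mathcal{D}_i^{\perp} = \langle \mathcal{P}_{\ge i+a}, \mathcal{S}_{<i}\rangle$ is the subtlest point and is the main obstacle I foresee. Here I would use Gorenstein duality on the three-piece SOD $\Dbgr(A) = \langle \mathcal{S}_{<i}, \mathcal{D}_i, \mathcal{S}_{\ge i}\rangle$ (obtained by concatenating Lemma~\ref{lemma:sod1} with the first SOD): applying $\RHom_A(-,A)$ and tracking $\kk(e) \mapsto \kk(a-e)$ (up to cohomological shift) matches the outer pieces to those of the corresponding three-piece SOD for $A^{\op}$ at the shifted index $1-a-i$, forcing $\RHom_A(\mathcal{D}_i, A) = \mathcal{D}^{\op}_{1-a-i}$ by uniqueness of middle pieces. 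Consequently $\mathcal{D}_i^{\perp}$ corresponds under the duality to ${}^{\perp}\mathcal{D}^{\op}_{1-a-i}$, which is computed inside $\Dbgr(A^{\op})$ as $\langle \mathcal{S}^{\op}_{\ge 1-a-i}, \P^{\op}_{<1-a-i}\rangle$ by combining the first SOD for $A^{\op}$ (at index $1-a-i$) with Lemma~\ref{lemma:sod1}; dualizing back yields the stated $\langle \mathcal{P}_{\ge i+a}, \mathcal{S}_{<i}\rangle$. Careful tracking of the Gorenstein shift $a$ throughout is the central bookkeeping.
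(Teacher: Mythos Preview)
Your proposal is correct and follows essentially the same strategy as the paper: the first SOD comes from the adjunction in Proposition~\ref{prop:faithful}, the second from Gorenstein duality applied to Lemma~\ref{lemma:sod1} for $A^{\op}$, the identification $\mathcal{T}_i\simeq\Dsing(A)$ from the compatible Verdier quotients, and $\mathcal{T}_i^{\perp}$ from concatenation.

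The one place your argument diverges is the computation of $\mathcal{D}_i^{\perp}$. You route through $\mathcal{D}^{\op}_{1-a-i}$, which requires the first SOD---and hence Proposition~\ref{prop:faithful}---for $A^{\op}$; that in turn needs $(A^{0})^{\op}$ to be Gorenstein or strictly commutative, automatic in the commutative case but an extra two-sidedness hypothesis in the noncommutative Gorenstein case. The paper sidesteps this entirely. From the three-piece SOD it extracts only the identity $\langle \mathcal{S}_{<i}, \mathcal{D}_i\rangle = \mathcal{S}_{\ge i}^{\perp}$, and transports just this one orthogonal under the duality: $\mathcal{S}_{\ge i}^{\perp}$ corresponds to $^{\perp}\mathcal{S}^{\op}_{<1-a-i}$, and Lemma~\ref{lemma:sod1} alone (no Proposition~\ref{prop:faithful} needed) gives $^{\perp}\mathcal{S}^{\op}_{<j} = \Dbgr(A^{\op})_{\ge j} = (\P^{\op}_{<j})^{\perp}$ for every $j$. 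Dualizing back yields $\langle \mathcal{S}_{<i}, \mathcal{D}_i\rangle = {}^{\perp}\mathcal{P}_{\ge i+a}$ directly, and hence $\mathcal{D}_i^{\perp} = \langle \mathcal{P}_{\ge i+a}, \mathcal{S}_{<i}\rangle$. This is both shorter and avoids any appeal to Proposition~\ref{prop:faithful} on the opposite side, so the ``main obstacle'' you flagged dissolves once you swap $\mathcal{S}^{\op}$ for $\mathcal{P}^{\op}$ via Lemma~\ref{lemma:sod1} rather than via $\mathcal{D}^{\op}$.
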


\begin{proof}
    By Proposition~\ref{prop:faithful}, $\dGamma_{\ge i} \co \Dqgr(A) \to \mathcal{D}_i$ is an equivalence. Since $\dGamma_{\ge i}$ is the right adjoint of the canonical functor $\Dbgr(A)_{\ge i} \to \Dqgr(A)$,
    it follows that $\mathcal{D}_i$ is left admissible in $\Dbgr(A)_{\ge i}$, with left orthogonal $\mathcal{S}_{\geq i}$. We conclude that $\mathcal{S}_{\geq i}$ is right admissible in $\Dbgr(A)_{\geq i}$, and there is a weak semiorthogonal decomposition $\Dbgr(A)_{\geq i}=\langle \mathcal{D}_i, \mathcal{S}_{\geq i}\rangle$.
     
    Since $A$ is Gorenstein, the functor $\RHom_{A^{\op}}( -, A)$ gives an equivalence
    \begin{equation}
    \label{eqn:duality}
    \Dbgr(A^{\op}) \xra{\simeq} \Dbgr(A)^{\op}
    \end{equation}
    By Lemma~\ref{lemma:sod1}, the subcategory $\mathcal{P}^{\op}_{< -i + 1}$ of $\Dbgr(A^{\op})$ is right admissible; by duality, it follows that $\mathcal{P}_{\ge i}$ is left admissible in $\Dbgr(A)$, and hence in $\Dbgr(A)_{\ge i}$ as well. We thus have a weak semiorthogonal decomposition $\Dbgr(A)_{\ge i} = \langle \mathcal{P}_{\ge i}, \mathcal{T}_i\rangle$ for some $\mathcal{T}_i \subseteq \Dbgr(A)_{\ge i}$. By \cite[Lemma 1.4]{Orlov2009}, we have $\mathcal{T}_i \simeq \Dbgr(A)_{\geq i}/\mathcal{P}_{\geq i}$. It therefore follows from \cite[Lemma 1.1]{Orlov2009} that the canonical functor $\Dbgr(A)_{\geq i}/\mathcal{P}_{\geq i}\rightarrow \Dbgr(A)/\Perf(A)=\Dsing(A)$ is fully faithful. By \cref{prop:semi-free}, this functor is also essentially surjective and hence an equivalence. Finally, we observe that the composite functor $\mathcal{T}_i\to \Dsing(A)$ coincides with the composition $\mathcal{T}_i \into \Dbgr(A)_{\ge i} \xra{q}  \Dsg(A)$.
    
    The equality $\mathcal{T}_i^{\perp} = \langle \mathcal{S}_{<i},\mathcal{P}_{\geq i}\rangle$ is immediate from Lemma~\ref{lemma:sod1}. We now show $\mathcal{D}_i^{\perp} = \langle \mathcal{P}_{\ge i + a}, \mathcal{S}_{< i} \rangle.$ Using Lemma~\ref{lemma:sod1} and the equality $\Dbgr(A)_{\ge i} = \langle \mathcal{D}_i, \S_{\ge i} \rangle$, we have $\langle \S_{< i}, \mathcal{D}_i \rangle = \S_{\ge i}^\perp$. The Gorenstein condition on $A$ implies that $\mathcal{S}_{\geq i}^\perp \subseteq \Dbgr(A)$ corresponds to $^{\perp}\mathcal{S}_{<-i-a+1}^{\op} \subseteq \Dbgr(A^{\op})$ via the duality~\eqref{eqn:duality}. The subcategory $^{\perp}\mathcal{S}_{<-i-a+1}^{\op} \subseteq \Dbgr(A^{\op})$ coincides with $(\mathcal{P}_{<-i-a+1}^{\op})^{\perp}\subseteq \Dbgr(A^{\op})$ by Lemma~\ref{lemma:sod1}. Finally, applying the duality~\eqref{eqn:duality} to $(\mathcal{P}_{<-i-a+1}^{\op})^{\perp}$ gives $^{\perp}\mathcal{P}_{\geq i+a} \subseteq \Dbgr(A)$; we conclude that $\langle \S_{< i}, \mathcal{D}_i \rangle$ is equal to $^{\perp}\mathcal{P}_{\geq i+a}$, i.e. $\mathcal{D}_i^{\perp} = \langle \mathcal{P}_{\ge i + a}, \mathcal{S}_{< i} \rangle.$
\end{proof}

\begin{lem}
\label{lem:exceptional}
Let $A$ be as in \Cref{setup}. 
We have full exceptional collections
$$
\Perf(A) = \langle \dots, A(-1), A, A(1), \dots \rangle \quad \text{and} \quad
\D^{\tors}_{\on{gr}}(A) = \langle \dots, \k(1), \k, \k(-1), \dots \rangle.
$$
\end{lem}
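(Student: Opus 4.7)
The plan is to treat the two collections separately. The fullness part of each is already covered by earlier results: Remark~\ref{rem:perf} says $\Perf(A)$ is thickly generated by the $A(j)$, and Lemma~\ref{thick} says the same for $\D^{\tors}_{\on{gr}}(A)$ and the $\k(j)$. What is left is to check the required Hom-vanishing and exceptionality.

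For the first collection I would use that $A$ is free (hence $K$-projective) over itself, which gives $\RHom_A(A(j'), A(j)) \cong A(j-j')$ in $\Dbgr(A)$ and therefore
$$
\Hom_{\Dbgr(A)}(A(j), A(j')[p]) = H^p(A)_{j'-j}.
$$
Connectedness of $A$ forces $H^p(A)_i = 0$ whenever $i < 0$, so for $j > j'$ this vanishes for all $p$, which is the required semiorthogonality in the stated order. Exceptionality is equally direct: $A_0 = A_0^0 = \k$ says the internal-degree-zero subcomplex of $A$ is $\k$ concentrated in cohomological degree zero, so $H^p(A)_0 = \k \cdot \delta_{p,0}$.

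For the second collection, the analogous identification
$$
\Hom_{\Dbgr(A)}(\k(-j), \k(-j')[p]) = H^p(\RHom_A(\k,\k))_{j-j'}
$$
reduces the task to showing $H^p(\RHom_A(\k,\k))_i = 0$ for $i > 0$ (all $p$), and $H^p(\RHom_A(\k,\k))_0 = \k \cdot \delta_{p,0}$. To compute $\RHom_A(\k,\k)$, I apply Proposition~\ref{prop:semi-free} to $M = \k$; since $H(\k)$ already has $m = n = 0$, the inductive construction in that proof produces a semi-free resolution $F \iso \k$ whose generators all lie in internal degrees $\ge 0$, so that $F_i = 0$ for $i < 0$. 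Because $\k$ is concentrated in bidegree $(0,0)$, an $A$-linear map $F \to \k$ of bidegree $(i,p)$ is determined by its restriction to $F_{-i}^{-p}$, and $A$-linearity is automatic since $A_{\ge 1}$ annihilates $\k$. For $i > 0$ the source vanishes, so $\uHom_A(F,\k)_i^{\bullet} = 0$ and the required vanishing in positive internal degree is immediate. In internal degree zero, $\uHom_A(F,\k)_0^{\bullet}$ is naturally the $\k$-dual of the cochain complex $F_0^{\bullet}$ of $\k$-vector spaces; since the quasi-isomorphism $F \iso \k$ restricts to a quasi-isomorphism $F_0 \iso \k$ in internal degree zero, dualizing and taking cohomology yields $H^p(\uHom_A(F,\k))_0 = \k \cdot \delta_{p,0}$, as required.

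The only non-routine ingredient is the existence of the semi-free resolution with $F_i = 0$ for $i < 0$, which comes from the explicit inductive construction in the proof of Proposition~\ref{prop:semi-free} in the unshifted case. Once that is in hand, both vanishings collapse to linear algebra on the internal-degree-zero strand of $F$, and I do not anticipate any further obstacle.
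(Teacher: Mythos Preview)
Your proof is correct. The first half, for $\Perf(A)$, is identical to the paper's. For $\D^{\tors}_{\on{gr}}(A)$ the paper takes a slightly different route: instead of resolving $\k$ directly, it resolves $A(m)_{\ge -m+1}$ via Proposition~\ref{prop:semi-free}, observes that $\Hom_{\D_{\gr}(A)}(A(m)_{\ge -m+1}, \k(j)[p]) = 0$ for $j \ge m$ by the same degree count you use, and then applies $\RHom_A(-,\k(j)[p])$ to the triangle $A(m)_{\ge -m+1} \to A(m) \to \k(m)$ to obtain $\Hom_{\D_{\gr}(A)}(\k(m),\k(j)[p]) \cong H(\k)^p_{j-m}$ in one stroke. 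Both arguments rest on the same key input (the internal-degree bounds of Proposition~\ref{prop:semi-free}); the paper's packaging is marginally shorter because the triangle handles semiorthogonality ($j>m$) and exceptionality ($j=m$) simultaneously, whereas your approach treats the internal-degree-zero case via a separate dualization of $F_0$.
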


\begin{proof}
Fix $j,m,p\in \mathbb{Z}$. The statement concerning $\Perf(A)$ follows from~\Cref{rem:perf} and the evident isomorphism
$
\Hom_{\D_{\on{gr}}(A)}(A(m),A(j)[p]) \cong H(A)^p_{j-m}.
$
Similarly, the statement about $\D^{\tors}_{\on{gr}}(A)$ follows from \Cref{thick} and the isomorphism $\Hom_{\D_{\on{gr}}(A)}(\kk(m),\kk(j)[p]) \cong H(\kk)^p_{j-m}$ whenever $j \ge m$; let us prove this isomorphism. Let $G \xra{\simeq} A(m)_{\ge -m + 1}$ be a semi-free resolution as in \Cref{prop:semi-free}, and observe that, if $j \ge m$, then 
$
\Hom_{\D_{\gr}(A)}(A(m)_{\geq -m+1},\kk(j)[p]) = \Hom_A(G, \k(j)[p]) = 0
$
for degree reasons. 
Applying $\RHom_A(-,\kk(j)[p])$ to the triangle  
$$
A(m)_{\geq -m+1} \xrightarrow{} A(m) \xrightarrow{} \kk(m) \xrightarrow{} A(m)_{\geq -m+1}[1],
$$
we conclude $\Hom_{\D_{\gr}(A)}(\k(m), \k(j)[p]) \cong \Hom_{\D_{\gr}(A)}(A(m), \k(j)[p]) \cong H(\k)_{j - m}^p$ when $j \ge m$. 
\end{proof}

\begin{proof}[Proof of Theorem~\ref{main}]
  We will use the notation of Lemma~\ref{lemma:sod2} throughout the proof. 
 Let us now prove (1) and (3). Since $a \ge 0$, we have $\mathcal{P}_{\geq i+a}\subseteq \: ^{\perp}\mathcal{S}_{<i}$. Thus, the components of the decomposition $\mathcal{D}_i^{\perp} = \langle \mathcal{P}_{\ge i + a}, \mathcal{S}_{< i} \rangle$ from Lemma~\ref{lemma:sod2} may be interchanged, i.e.  
 $\mathcal{D}_i^{\perp} = \langle \mathcal{S}_{< i}, \mathcal{P}_{\ge i + a} \rangle$. In particular, if $a=0$, then the right orthogonals of $\mathcal{D}_i$ and $\mathcal{T}_i$ in $\Dbgr(A)$ coincide and hence we obtain an equivalence $\Dsing(A) \xra{\simeq} \Dqgr(A)$. Noting that $\mathcal{P}_{\geq i}=\langle \mathcal{P}_{\geq i+a},A(-i-a+1),\dots,A(-i)\rangle$ when $a>0$, applying Lemma~\ref{lemma:sod2} once again, we have
 $$
 \Dbgr(A) = \langle \S_{< i}, \mathcal{P}_{\ge i}, \mathcal{T}_i \rangle 
 = \langle \S_{< i}, \mathcal{P}_{\geq i+a},A(-i-a+1),\dots,A(-i), \mathcal{T}_i\rangle.
$$
 We conclude that $\mathcal{D}_i = \langle A(-i-a+1),\dots,A(-i), \mathcal{T}_i\rangle$ when $a>0$. Recall that $\mathcal{D}_i$ is the essential image of $\dGamma_{\ge i}$; hence, by \Cref{prop:faithful}, $\pi_i: \mathcal{D}_i\to \Dqgr(A)$ is an equivalence. Letting $\Phi_i \co \Dsing(A) \to \DQgr(A)$ denote the fully faithful embedding given by the composition $\Dsing(A) \simeq \mathcal{T}_i \into \mathcal{D}_i \simeq \DQgr(A)$, we have the weak semi-orthogonal decomposition
 $
 \DQgr(A) = \langle \pi A(-i-a+1),\dots,\pi A(-i), \Phi_i\Dsing(A)\rangle
 $ when $a>0$. By \Cref{lem:exceptional}, the sequence of objects $\pi A(-i-a+1),\dots,\pi A(-i)$ in $\Dqgr(A)$ form an exceptional collection. Since the left orthogonal of an admissible category is admissible, and a subcategory generated by an exceptional collection is admissible, we see that the above decomposition is in fact semi-orthogonal. We now prove (2). Since $A$ is Gorenstein, we have
$
\RHom_A(\k(s), A(t)) = \k(a + t - s)[n]
$
for some $n \in \Z$. Since $a < 0$, it follows that, if $M\in \mathcal{S}_{< i}$ and $N\in \mathcal{P}_{\geq i}$, then $\Hom_{\Dbgr(A)}(M,N)=H^0(\RHom_A(M,N))_0=0$.
Thus, the components of the decomposition $\mathcal{T}_i^{\perp} = \langle \mathcal{S}_{<i},\mathcal{P}_{\geq i}\rangle$ from \Cref{lemma:sod2} may be interchanged, i.e. $\mathcal{T}_i^{\perp} =  \langle \mathcal{P}_{\geq i}, \mathcal{S}_{<i}\rangle$. Applying Lemma~\ref{lemma:sod2} once again, and using that $\S_{< i - a} = \langle \S_{< i}, \k(-i), \dots, \k(-i + a + 1)\rangle$, we have
$$
\Dbgr(A) = \langle \mathcal{P}_{\ge i}, \S_{< i - a}, \mathcal{D}_{i-a} \rangle
= \langle \mathcal{P}_{\ge i}, \S_{< i}, \k(-i), \dots, \k(-i + a + 1), \mathcal{D}_{i-a} \rangle.
$$
We conclude that $\mathcal{T}_i = \langle \k(-i), \dots, \k(-i + a + 1), \mathcal{D}_{i-a} \rangle$. Recall from \Cref{lemma:sod2} that $q:\mathcal{T}_i\to \Dsing(A)$ is an equivalence. Letting $\Psi_i \co \Dqgr(A) \to \Dsing(A)$ denote the fully faithful embedding given by the composition $\Dqgr(A) \simeq \mathcal{D}_{i - a} \into \mathcal{T}_i \simeq \Dsing(A)$, we have the weak semiorthogonal decomposition
$
\Dsing(A) = \langle q\k(-i), \dots, q\k(-i + a + 1), \Psi_i \Dqgr(A) \rangle.
$
It follows from \Cref{lem:exceptional}(2) that the sequence  $q\k(-i), \dots, q\k(-i + a + 1) \in \Dsing(A)$ forms an exceptional collection. Once again, since the left orthogonal of an admissible category is admissible, and a subcategory generated by an exceptional collection is admissible, the above decomposition is semi-orthogonal.
\end{proof}

\begin{rem}
Let $A$ be as in Theorem~\ref{main} and $i \in \Z$. When $a \le 0$, the embedding $\Psi_i \co \Dqgr(A) \to \Dsg(A)$ is straightforward to describe: it sends  $\w{M} \in \Dqgr(A)$ to $q \dGamma_{\ge i-a}(\w{M}) \in \Dsg(A)$, where $q \co \Db(A) \to \Dsg(A)$ denotes the canonical map. When $a \ge 0$, the embedding $\Phi_i \co \Dsg(A) \to \Dqgr(A)$ is given as follows; this discussion mirrors that of Burke-Stevenson in \cite[Section 5]{BS}. Let $q(M)$ be an object in $\Dsg(A)$. Given a free dg-$A$-module $P$, let $P_{\prec j}$ denote the dg-submodule given by summands of the form $A(s)[t]$ with $s > -j$, and let $P_{\succcurlyeq j} \ce P /P_{\prec j}$.
Let $F$ be a semi-free resolution of $M$ as in \Cref{prop:semi-free} and $G$ a semi-free resolution of $\uHom_A(F_{\succcurlyeq i}, A)$ as in \Cref{prop:semi-free}. The object $\Phi_i(M) \in \Dqgr(A)$ is $\pi(\uHom_A(G,A)_{\prec i})$, where $\pi \co \Db(A) \to \Dqgr(A)$ is the canonical map. For example, if $M \in \Perf(A)$, then $F$ may be chosen to be finitely generated, and $\uHom_A(F_{\succcurlyeq i}, A)$ is its own semi-free resolution. Thus, $\Phi_i(M) = (F_{\succcurlyeq i})_{\prec i} = 0$, as expected.
\end{rem}

\begin{rem}
\label{rem:bidual}
The assumption in Theorem~\ref{main} that $A^0$ is Gorenstein and strictly commutative is needed to apply Proposition~\ref{prop:faithful} in the proof of Lemma~\ref{lemma:sod2}. But, as explained in Remark~\ref{rem:bidual1}, this hypothesis in Proposition~\ref{prop:faithful} can be replaced with the assumption that $A^0$ admits a balanced dualizing complex, in the sense of \cite[Definition 4.1]{YEKUTIELI1_dualizing}; the same is thus true of Theorem~\ref{main}. 
\end{rem}

\begin{proof}[Proof of Corollary~\ref{cor:exc}]
It follows by combining \cite[Corollary 3.12]{RS} and \cite[Corollary 5.6]{YEKUTIELI1_dualizing} that $A^0$ admits a balanced dualizing complex. The result therefore follows from Theorem~\ref{main} and Remark~\ref{rem:bidual}. 
\end{proof}

\begin{example} 
Theorem~\ref{main} applies to each of the families of Gorenstein dg-algebras discussed in Examples~\ref{ex:koszulparam} through \ref{ex:nc}. For instance, suppose $K$ is the Koszul complex on homogeneous forms $f_1, \dots, f_c \in \k[x_0, \dots, x_n]$, where $\deg(x_i) = 1$ for all $i$. Let $\cK$ denote the sheaf of dg-algebras on $\PP^n$ associated to $K$. Recall from Example~\ref{ex:koszulparam} that the Gorenstein parameter of $K$ is $a \ce n+1 - \sum_{i = 1}^c\deg(f_i)$. By \Cref{prop:sheaf}, we have $\Dqgr(K) \simeq \Db(\cK)$, and so Theorem~\ref{main}(3) yields fully faithful embeddings between $\Db(\cK)$ and $\Dsing(K)$. As a specific example, say $n = 3$, $c = 2$, and $f_1$ and $f_2$ are given by the (non-regular) sequence $x_0^2-x_0x_3, x_0x_1-x_0x_2$. Since $a = 0$, we have $\Db(\cK) \simeq \Dsg(K)$.
\end{example}

\begin{example}
Consider the exterior algebra $E = \bigwedge_\k(e_0, \dots, e_n)$, considered as a dg-algebra with trivial differential and bigrading given by $\bideg(e_i) = (d_i, 1)$, where $d_i \ge 1$ for all $i$. The dg-algebra $E$ satisfies the conditions in Theorem~\ref{main}, and we have $\RHom_E(\k, E) = \uHom_E(\k, E) = \k(-d)$, where $d = \sum_{i = 0}^n d_i$. Thus, the Gorenstein parameter of $E$ is $-d < 0$; since $\Dqgr(E) = 0$, we conclude that $\Dsg(E)$ has a full exceptional collection, namely $\Dsg(E) = \langle \k, \dots, \k(-d + 1) \rangle$. It is known that Koszul duality yields an equivalence $\Dsg(E) \simeq \Db(\mathcal{P})$, where $\mathcal{P}$ denotes the weighted projective stack with weights $d_0, \dots, d_n$ \cite[Proposition 6.3]{BE}; moreover, $\Db(\mathcal{P})$ is generated by the exceptional collection $\OO(-d+1), \dots, \OO$ by Theorem~\ref{thm:orlov} and \cite[Corollary 2.18]{Orlov2009}. Thus, the full exceptional collection of $\Dsg(E)$ obtained here is a manifestation of Koszul duality. 
\end{example}

\begin{rem}
\label{rem:strong}
    The exceptional collections arising as the orthogonal of $\Dsg(A)$ in $\Dqgr(A)$ in \Cref{main}(1) and of $\Dqgr(A)$ in $\Dsg(A)$ in \Cref{main}(2) need not be strong.
    \begin{enumerate}
    \item
    Suppose the Gorenstein parameter $a$ is positive, and let $\mathscr{C}$ denote the exceptional collection $\pi A(-i-a+1),\dots,\pi A(-i) \in \Dqgr(A)$. The proof of \Cref{main} implies that $\mathscr{C}$ is strong if and only if the exceptional collection $A(-i-a+1),\dots,A(-i) \in \D_{\on{gr}}(A)$ is strong. It is easily seen that this latter collection is strong if and only if $H^p_j(A)=0$ for all $p<0$ and $j\leq a-1$. In particular, it is strong if $A$ is concentrated in cohomological degree zero, i.e. in the context of Orlov's Theorem (\Cref{thm:orlov}). For an example where the collection $\mathscr{C}$ is not strong, let $A$ be the Koszul complex on $x_0^2, x_0x_1 \in S= \kk[x_0,\dots,x_7]$. In this case, $a = 4$ (\Cref{ex:koszulparam}), and $H^{-1}(A)$ is a non-zero cyclic $S$-module generated in internal degree $3=a-1$; thus, $\mathscr{C}$ is not strong. 
\item
    Now suppose $a < 0$. As in (1), the exceptional collection $\mathscr{C}=q\kk(-i),\dots,q\kk(-i+a+1) \in \Dsing(A)$ is strong if and only if the exceptional collection $\kk(-i),\dots,\kk(-i+a+1)$ in $\D_{\on{gr}}(A)$ is strong. The latter collection is strong if and only if $\underline{\Ext}^p_A(\kk,\kk)_j = 0$ for all $p>0$ and $j\geq a+1$. This need not be the case, even in the context of Orlov's Theorem (\Cref{thm:orlov}): taking $A=\kk[x]/(x^3)$, we have $a=-2$, and $\underline{\Ext}^1_A(\k, \k)_{a + 1} = \k$. 

\item While the exceptional collection obtained in (2) above is not always strong, even in the case of \Cref{thm:orlov}, Orlov constructs a ``dual" exceptional collection that \emph{is} strong in his setting: see the proof of \cite[Corollary 2.9]{Orlov2009}. This exceptional collection has a natural analog in our context; however, it need not always be strong. In detail: we define $E_i \ce A(i+a+1)/A(i+a+1)_{\geq -a}$, and we consider the collection of objects $\mathscr{C} = E_0,\dots,E_{-a-1}$ in $\Dbgr(A)$; one easily checks that this collection generates the same thick subcategory of $\Dbgr(A)$ as $\kk(-i),\dots,\kk(-i+a+1)$. Once again, this is a (strong) exceptional collection if and only if $qE_0,\dots,qE_{-a-1}$ is a (strong) exceptional collection in $\Dsing(A)$. Fix $p \in \Z$ and $0 \le i, j \le -a-1$. We claim that there is an isomorphism $\Hom_{\D_{\on{gr}}(A)}(E_i,E_j[p]) \cong H^p(E_j)_{-i - a - 1}.$
Indeed, let $G$ be a semi-free resolution of $A(i+a+1)_{\geq -a}$ as in \Cref{prop:semi-free}; we have $\Hom_{\D_{\gr}(A)}(A(i+a+1)_{\ge -a}, E_j[p]) = \Hom_A(G, E_j[p]) = 0$ for degree reasons. Applying $\RHom_A( - , E_j[p])$ to the triangle
    $
A(i+a+1)_{\geq -a} \xrightarrow{} A(i+a+1) \xrightarrow{} E_i\xrightarrow{} A(i+a+1)_{\geq -a}[1]
$
gives the desired isomorphism, and it follows immediately from this isomorphism that the collection  $\mathscr{C}$ is exceptional. To see that the collection $\mathscr{C}$ is not always strong, suppose $A$ is the Koszul complex on $x_0^2, x_0x_1, x_2^3 \in \kk[x_0,x_1,x_2]$. By \Cref{ex:koszulparam}, we have $a=-4$, and a direct calculation shows that
$
\Hom_{\D_{\gr}(A)}(E_0, E_3[-1]) \cong H^{-1}(E_3)_3 = H^{-1}(A)_3 \ne 0.
$
    \end{enumerate}
\end{rem}

\begin{rem}
\label{rem:jorgensen}
Let $A$ be as in Setup~\ref{setup}, and suppose $\Dsing(A) = 0$. Since the object $\k$ is perfect and concentrated in cohomological degree 0, a (bigraded version of a) result of J\o rgensen \cite[Theorem~A]{Jorgensen_amplitude} implies that the cohomology of $A$ is concentrated in cohomological degree 0. The case of Theorem~\ref{main} where $\Dsing(A) = 0$ thus yields no new results beyond those implied by Orlov's Theorem (Theorem~\ref{thm:orlov}).
\end{rem}

\begin{rem}
\label{rem:question}
Let us suppose that $A$ is as in Theorem~\ref{thm:orlov}, with Gorenstein parameter $a = 0$, so that we have  $\Dqgr(A) \simeq \Dsing(A)$. It follows from \cite[Theorem 1.1]{krause2} that $\Dqgr(A)$ (resp. $\Dsing(A)$) is equivalent to the subcategory of compact objects in the homotopy category of complexes of injective objects (resp. the homotopy category of acyclic complexes of injective objects) in the abelian category $\qgr(A)$ defined in \cite{AZ} (resp. the abelian category of graded $A$-modules). This raises the question: is the equivalence in Theorem~\ref{thm:orlov}(3) the induced map on compact objects arising from an equivalence of these larger homotopy categories? If so, does this equivalence extend to the differential graded setting?
\end{rem}

\section{Application to the Lattice Conjecture}
\label{sec:lattice}
\def\top{\on{top}}
\def\ch{\on{ch}}
\def\dg{\on{dg}}

Let $\C$ be a $\mathbb{C}$-linear dg-category, $K_*^{\top}(\C)$ its topological $K$-theory groups~\cite{blanc}, and $HP_*(\C)$ its periodic cyclic homology groups; see e.g. \cite[Section 3]{BWchern} for background on periodic cyclic homology of dg-categories. There is a topological Chern character map $\ch^{\top} \co K_*^{\top}(\C) \to HP_*(\C)$~\cite[Section 4]{blanc}. The Lattice Conjecture~\cite[Conjecture 1.7]{blanc} predicts that the complexified topological Chern character is an isomorphism when $\C$ is smooth and proper. We recall that a dg-category is called \emph{smooth} if it is perfect as a $\C$-$\C$-bimodule, and it is \emph{proper} if the total cohomology of each of its morphism complexes is finite dimensional over $\k$; for instance, if $X$ is separated of finite type over $\C$, then the dg-category of perfect complexes on $X$ is smooth and proper if and only if $X$ is smooth and proper over $\mathbb{C}$~\cite[Proposition 3.31]{orlovsmooth}.

\begin{conj}[The Lattice Conjecture]
\label{conj:lattice}
Suppose $\C$ is smooth and proper. The topological Chern character map $\ch^{\top}$ induces an isomorphism $K_*^{\top}(\C) \otimes_\Z \mathbb{C} \xra{\cong} HP_*(\C)$.
\end{conj}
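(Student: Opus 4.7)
The plan is to attempt a devissage reduction. Both topological $K$-theory and periodic cyclic homology are additive invariants of dg-categories, so they convert semiorthogonal decompositions $\C = \langle \C_1,\ldots,\C_n\rangle$ into direct-sum decompositions, and the topological Chern character is a natural transformation compatible with these splittings. Thus, if the Lattice Conjecture holds for each semiorthogonal component of $\C$, it holds for $\C$. This additivity principle is the only general handle available and will dictate the entire shape of the argument.

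The first step is the base case $\C = \perf(\mathbb{C})$: here $K^\top_*(\C)$ agrees with topological $K$-theory of a point, $HP_*(\C) = \mathbb{C}[\beta^{\pm 1}]$ is concentrated in even degrees, and the Chern character is a complexified Bott isomorphism. By induction on the length of any full exceptional collection, the conjecture then holds for any smooth proper $\C$ admitting one. The second step is to invoke Blanc's theorem, which establishes the conjecture for dg-enhancements of $\Db(X)$ whenever $X$ is a smooth proper complex variety. Combining these two inputs with the devissage principle yields the conjecture for every smooth proper $\C$ that admits a finite semiorthogonal decomposition into admissible subcategories of perfect complexes on smooth proper varieties, which is the largest class accessible by these purely formal methods.

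The third and critical step would be to produce such a decomposition for an \emph{arbitrary} smooth proper dg-category. The natural approach is to realize $\C$ as an admissible subcategory of $\perf(X)$ for some smooth proper variety $X$; given such an embedding, the orthogonal complement in $\perf(X)$ is itself smooth and proper, so one could attempt to iterate the devissage or to argue directly that the Chern character of $\C$ is an isomorphism because the Chern characters of $\perf(X)$ and of the complement are.

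The main obstacle is precisely this last step. The existence of a geometric enhancement for every smooth proper dg-category is Orlov's conjecture and is itself open, and there is presently no known direct comparison of $K^\top_*$ and $HP_*$ that bypasses a reduction to geometric examples. For this reason the full Lattice Conjecture as stated is out of reach with current techniques, and progress proceeds by establishing the conjecture for ever-larger classes of $\C$ via additivity, of which Theorem~\ref{thm:lattice} in this paper is one instance.
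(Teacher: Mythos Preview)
The statement you were asked to prove is \Cref{conj:lattice}, which the paper records as an \emph{open conjecture}; the paper does not supply a proof, and none is known. Your write-up correctly recognizes this: you sketch the standard d\'evissage strategy (additivity of $K^{\top}_*$ and $HP_*$ with respect to semiorthogonal decompositions, the base case $\Perf(\mathbb{C})$, Blanc's result for smooth proper varieties), and you accurately pinpoint the obstruction---namely that one would need every smooth proper dg-category to embed admissibly into $\Perf(X)$ for some smooth proper $X$, which is itself an open problem. This is an honest and essentially correct summary of why the conjecture remains unproven; there is no ``paper's own proof'' to compare against.

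One small correction: you attribute the case of smooth proper varieties to Blanc, but more precisely Blanc handles smooth proper schemes via comparison with classical Hodge theory, and the extensions to more general $X$ (derived stacks, quasi-separated algebraic spaces) are due to Halpern-Leistner--Pomerleano, Konovalov, and Khan, as the paper's list indicates. Also, your phrase ``the Chern character of the complement is'' in the third paragraph tacitly assumes the conjecture for the complement, so that step is circular rather than an actual reduction; but since you immediately concede the approach fails anyway, this does not affect your overall conclusion.
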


Motivation for the Lattice Conjecture comes from Katzarkov-Kontsevich-Pantev's work on noncommutative Hodge theory~\cite{KKP}. Specifically: when $\C$ is a smooth and proper dg-category, $K_0^{\on{\top}}(\C) \otimes_\Z \mathbb{Q}$ is believed to provide the rational lattice in the (conjectural) noncommutative Hodge structure on $HP_0(\C)$. While the Lattice Conjecture involves smooth and proper dg-categories, it is known to hold in many cases beyond this setting. The following is a list of families of dg-categories for which the Lattice Conjecture is known:
\begin{enumerate}
\item $\on{Perf}(X)$, for $X$ a derived stack of finite type over $\mathbb{C}$ and either Deligne-Mumford with separated diagonal or of the form $[Y/G]$, where $Y$ is a quasi-separated derived algebraic space of finite type over $\mathbb{C}$, and $G$ is an affine algebraic group with diagonalizable identity component~\cite[Theorem A]{khan} (special cases of this result had previously been obtained in \cite{blanc, HLP, kono});
\item a connected, proper dg-algebra  \cite[Theorem 1.1]{kono};
\item a connected dg-algebra $A$ such that $H_0(A)$ is a nilpotent extension of a commutative $\mathbb{C}$-algebra of finite type \cite[Theorem 1.1]{kono};
\item $\Db(X)$, where $X$ is a quasi-separated derived algebraic space of finite type over $\mathbb{C}$~\cite[Theorem B]{khan} (a special case of this result had previously been obtained in~\cite{BW}).
\end{enumerate}

As an application of Corollary~\ref{cor:exc}, we obtain a family of additional cases of the Lattice Conjecture. 
Before we state our result, we recall that, given a dg-algebra $A$, the categories $\Dbgr(A)$, $\Perf(A)$, and $\Dsing(A)$ arise as the homotopy categories of dg-categories. Specifically: finitely generated dg-$A$-modules form a dg-category, and we may take the dg-quotient~\cite{drinfeld} of this dg-category by its subcategory of exact dg-modules to obtain a dg-category $\Dbgr(A)_{\dg}$ whose homotopy category is $\Dbgr(A)$, i.e. a \emph{dg-enhancement} of $\Dbgr(A)$. Define a dg-enhancement $\Perf(A)_{\dg}$ of $\Perf(A)$ in the same way, and then take the dg-quotient of $\Dbgr(A)_{\dg}$ by $\Perf(A)_{\dg}$ to form a dg-enhancement $\Dsg(A)_{\dg}$ of $\Dsing(A)$. Considering $A$ as a differential $\Z$-graded algebra by forgetting the internal grading, we also form the triangulated categories $\Db(A)$, $\on{Perf}(A)$, and $\D^{\on{sg}}(A)$ whose objects are differential $\Z$-graded $A$-modules with no internal grading. We define dg-enhancements $\Db(A)_{\dg}$, $\on{Perf}(A)_{\dg}$, and $\D^{\on{sg}}(A)_{\dg}$ of these triangulated categories just as above.

\begin{thm}
\label{thm:lattice}
Let $A$ be as in Setup~\ref{setup}. Assume that $A$ is Gorenstein and $\dim_k H(A) < \infty$. The Lattice Conjecture holds for the dg-categories $\Perf(A)_{\dg}$, $\on{Perf}(A)_{\dg}$, $\Dbgr(A)_{\dg}$, $\Db(A)_{\dg}$, $\Dsg(A)_{\dg}$, and $\D^{\on{sg}}(A)_{\dg}$.
\end{thm}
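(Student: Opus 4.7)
My plan is to combine Corollary~\ref{cor:exc} with known cases of the Lattice Conjecture (notably Konovalov's theorems, listed as (2) and (3) above) and three standard compatibility properties of the conjecture: it is preserved under finite semi-orthogonal decompositions, Verdier localization sequences of dg-categories, and filtered colimits of dg-categories. These properties follow from the fact that $K^{\top}$ and $HP$, with the topological Chern character between them, form a natural transformation between localizing (or, for $K^{\top}$, finitary) invariants in the sense of~\cite{blanc}.

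I would first handle two foundational cases. For $\on{Perf}(A)_{\dg}$, Konovalov's result for connected proper dg-algebras (item (2)) applies, since $A$ has finite-dimensional total cohomology. For $\Dsg(A)_{\dg}$, Corollary~\ref{cor:exc} furnishes a full \emph{finite} exceptional collection, decomposing it semi-orthogonally into copies of $\Perf(\k)$, for which the Lattice Conjecture is immediate.

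Next I would handle $\Perf(A)_{\dg}$ by exhausting it by the thick subcategories $\mathcal{C}_N \ce \langle A(j)\rangle_{|j|\le N}$; each $\mathcal{C}_N$ has a full finite exceptional collection $A(-N),\dots,A(N)$ by Lemma~\ref{lem:exceptional}, hence satisfies the Lattice Conjecture by the argument just given, and filtered colimits transport this to $\Perf(A)_{\dg}$. For $\Db(A)_{\dg}$ (ungraded), I would appeal to Koszul duality: as $A$ is a connected proper dg-algebra, $\Db(A)_{\dg}\simeq \on{Perf}(E)_{\dg}$ where $E=\RHom_A(\k,\k)$, and $E$ is a connected dg-algebra with $H^0(E)=\k$, so Konovalov's item (3) yields the Lattice Conjecture for $\on{Perf}(E)_{\dg}$. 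With these four cases in hand, the remaining two dg-categories $\Dbgr(A)_{\dg}$ and $\D^{\on{sg}}(A)_{\dg}$ follow from the Verdier localization sequences
\[
\Perf(A)_{\dg} \to \Dbgr(A)_{\dg} \to \Dsg(A)_{\dg}, \qquad \on{Perf}(A)_{\dg} \to \Db(A)_{\dg} \to \D^{\on{sg}}(A)_{\dg},
\]
together with a standard five-lemma argument applied to the long exact sequences on $K^{\top}$ and $HP$.

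I expect the main obstacle to lie in verifying carefully that the three compatibility properties of the Lattice Conjecture behave as advertised in the bigraded dg-setting, in particular that $K^{\top}$ commutes with filtered colimits of dg-categories in the generality needed and that the dg-enhancements appearing in the above localization sequences genuinely assemble into fiber sequences under both $K^{\top}$ and $HP$. The Koszul-duality identification $\Db(A)_{\dg}\simeq \on{Perf}(\RHom_A(\k,\k))_{\dg}$ in the ungraded dg-setting also warrants care, though this is expected to be a formal consequence of the connectedness and properness of $A$.
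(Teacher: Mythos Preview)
Your approach is genuinely different from the paper's and is largely correct, but it contains one gap worth naming.

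\textbf{Comparison.} The paper does not invoke Konovalov's item~(2) for $\on{Perf}(A)_{\dg}$, nor Koszul duality for $\Db(A)_{\dg}$. Instead, it first settles the \emph{graded} cases: $\Dsg(A)_{\dg}$ and $\Perf(A)_{\dg}$ via their full exceptional collections (Corollary~\ref{cor:exc} and Lemma~\ref{lem:exceptional}), then $\Dbgr(A)_{\dg}$ by ``2 out of 3''. To pass to the ungraded categories, the paper uses \emph{dg-orbit categories}: the canonical functors
\[
\Perf(A)_{\dg}/(-)^{\Z} \longrightarrow \on{Perf}(A)_{\dg}, \qquad \Dbgr(A)_{\dg}/(-)^{\Z} \longrightarrow \Db(A)_{\dg}
\]
are Morita equivalences (the second because $\k$ generates $\Db(A)$ when $\dim_\k H(A)<\infty$), and Tabuada's theorem on additive invariants of orbit categories transports the Lattice Conjecture across. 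A final ``2 out of 3'' gives $\D^{\on{sg}}(A)_{\dg}$. This route is uniform and avoids both the Koszul-duality identification and the filtered-colimit argument. Conversely, your filtered-colimit treatment of $\Perf(A)_{\dg}$ makes explicit what the paper leaves implicit in calling the infinite exceptional collection ``immediate''.

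\textbf{Gap.} Your argument for $\Db(A)_{\dg}$ hinges on applying Konovalov's item~(3) to $E=\RHom_A(\k,\k)$. But $A$ is connective (cohomologically nonpositive), so its Koszul dual $E$ is \emph{coconnective}: the semi-free resolution of $\k$ in Proposition~\ref{prop:semi-free} has generators in nonnegative cohomological degrees, whence $H^i(E)=0$ for $i<0$. Konovalov's results (2) and (3), as quoted, are for \emph{connective} dg-algebras; item~(3) does not apply to $E$ as stated. The Morita identification $\Db(A)_{\dg}\simeq \on{Perf}(E)_{\dg}$ is fine, but you would need a coconnective analogue of Konovalov's theorem, or a separate argument, to close this step. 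The paper's orbit-category passage sidesteps this issue entirely.
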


\begin{remark}
Theorem~\ref{thm:lattice} was already known for $\on{Perf}(A)_{\dg}$ by a theorem of Konovalov~\cite[Theorem 1.1]{kono}; see (2) in the above list of known cases of the Lattice Conjecture. Theorem~\ref{thm:lattice} gives an alternative proof of a special case of this result of Konovalov. Corollary~\ref{cor:exc} and Lemma~\ref{lem:exceptional} state that
 $\Dsg(A)$ and $\Perf(A)$ have full exceptional collections, and the proof of Theorem~\ref{main} yields that $\Dbgr(A)$ has a semiorthogonal decomposition whose summands have full exceptional collections; 
thus, the topological Chern character for each of these three categories is just a direct sum of copies of $\ch^{\top} \co K^{\top}_*(\mathbb{C}) \to HP_*(\mathbb{C})$. The main cases of interest in Theorem~\ref{thm:lattice} are thus $\Db(A)_{\dg}$ and $\D^{\on{sg}}(A)_{\dg}$. When $A$ is graded commutative, the Lattice Conjecture was previously proven by Khan for $\Db(A)_{\dg}$ (and consequently for $\D^{\on{sg}}(A)_{\dg}$ as well), via different methods \cite[Theorem B]{khan}.
\end{remark}

\begin{proof}[Proof of Theorem~\ref{thm:lattice}]
By Corollary~\ref{cor:exc} and~\Cref{lem:exceptional} , $\Dsg(A)$ and $\Perf(A)$ have full exceptional collections; it is therefore immediate that the Lattice Conjecture holds for $\Dsg(A)_{\dg}$ and $\Perf(A)_{\dg}$. By the ``2 out of 3" property for the Lattice Conjecture~\cite[Theorem 1.1]{kono}, we conclude that the conjecture holds for $\Dbgr(A)_{\dg}$ as well. Let $\Dbgr(A)_{\dg}/(-)^\Z$ denote the dg-orbit category associated to the grading twist functor $(-)$; see e.g. \cite[Page 1]{tabuada} for the definition of the dg-orbit category. Define $\Perf(A)_{\dg}/( - )^\Z$ and $\Dsg(A)_{\dg} / ( - )^\Z$ similarly. The Hom complex between objects $M$ and $N$ in $\Dbgr(A)_{\dg}/(-)^\Z$ is given by $\bigoplus_{i \in \Z} \Hom_{\Dbgr(A)_{\dg}}(M, N(i)) \simeq \Hom_{\Db(A)}(M, N)_{\dg}$, and similarly for $\Perf(A)_{\dg}/( - )^\Z$. It follows that the canonical functors
\begin{equation}
\label{dgfunctors}
\Perf(A)_{\dg}/( - )^\Z \to \on{Perf}(A)_{\dg}, \quad \Dbgr(A)_{\dg} / ( - )^\Z \to \Db(A)_{\dg}
\end{equation}
induce fully faithful embeddings on homotopy categories. Observe that $\on{Perf}(A)$ is generated by $A$, and $\Db(A)$ is generated by $\k$, since $\dim_\k H(A) < \infty$. Thus, the dg-functors~\eqref{dgfunctors} are essentially surjective up to summands on homotopy categories and hence Morita equivalences. 
Applying \cite[Theorem 1.5]{tabuada} and the ``2 out of 3" property for the Lattice Conjecture, the conjecture holds for $\on{Perf}(A)_{\dg}$ and $\Db(A)_{\dg}$. By the ``2 out of 3" property yet again, the conjecture also holds for $\D^{\on{sg}}(A)_{\dg}$.
\end{proof}

\bibliographystyle{amsalpha}
\bibliography{references}
\Addresses
\end{document}